\documentclass[11pt]{amsart}
\usepackage{pstricks} 
\usepackage{color}
\usepackage{todonotes}
\usepackage{tikz}
\usepackage{circuitikz}
\usetikzlibrary{positioning,patterns}
\usetikzlibrary{decorations.pathreplacing,decorations.markings}
\usepackage[pagewise]{lineno}
\usepackage{accents}
\usepackage{amsthm, amsmath}
\usepackage[all]{xy}
\usepackage{amssymb,latexsym}
\usepackage{graphicx}
\usepackage{enumitem} 
\usepackage{multirow}

\renewcommand{\phi}{\varphi}
\newcommand{\sns}[3]{\langle#1|#2|\cdots|#3\rangle}

\newcommand{\zed}{\ensuremath{\mathbb{Z}}}

\newcommand{\reals}{\mathbb{R}}

\newcommand{\qsal}{\mathcal S}
\newcommand{\qncp}{\mathcal N}

\newtheorem{theorem}{Theorem}[section]
\newtheorem{proposition}[theorem]{Proposition}
\newtheorem{corollary}[theorem]{Corollary}
\newtheorem{lemma}[theorem]{Lemma}
\theoremstyle{definition}

\newtheorem{definition}[theorem]{Definition}
\newtheorem{remark}[theorem]{Remark}
\newtheorem{example}[theorem]{Example}

\numberwithin{equation}{section}
\begin{document}
\title[Triangulating the Permutahedron]
{Triangulating the Permutahedron}
\author[Brady]{Thomas~Brady}
\email{tomglbrady@gmail.com}
\author[Delucchi]{Emanuele Delucchi}
\address{IDSIA USI-SUPSI, University of applied arts and sciences of Southern Switzerland, Lugano, Switzerland}
\email{emanuele.delucchi@supsi.ch}
\author[Watt]{Colum~Watt}
\address{School of Mathematics and Statistics\\
Technological University Dublin\\
Dublin D07 ADY7\\
Ireland}
\date{\today}
\subjclass{Primary 20F55;\, Secondary 20F36}
\keywords{finite reflection groups, generalised braid groups, non-crossing partitions, triangulation}

\begin{abstract}
For a finite irreducible real reflection group, $W$, we triangulate its permutahedron and use this to give an explicit homotopy equivalence between two known classifying spaces for the associated Artin group, $B(W)$.  In the process, we characterise the Bruhat intervals 
from $w_1$ to $w_2$ in $W$, where $w_1^{-1}w_2$ is a 
Coxeter element.
\end{abstract}
\maketitle
\section{Introduction}
\label{intro}
For $\pi$ an Artin group of finite type with reflection group $W$,  there are two well-studied 
$K(\pi, 1)$'s.  The Salvetti $K(\pi, 1)$, $\qsal(W)$, is obtained by making identifications on the corresponding $W$-permutahedron (\cite{Salv94}) and 
derives from a general construction (\cite{Salvetti87}) of a model for the homotopy type of the complement of a general complexified real hyperplane arrangement.
On the other hand, the non-crossing partition $K(\pi, 1)$, 
$\qncp(W)$, is obtained by making identifications on the order complex of the corresponding non-crossing partition lattice for $W$ (\cite{Bessis}, \cite{BWKayPie}) and is a special case of a classifying space  construction (\cite{BessisAnnals}) for the braid group of a general finite complex reflection group.
The interplay between analogues of these two $K(\pi,1)$'s has led to recent progress in the $K(\pi,1)$ conjecture for Artin groups of non-finite type \cite{PaoliniSalvetti,DelucchiPaoliniSalvetti}.

\vskip .2cm

Homotopy theory ensures the existence of a homotopy equivalence from the Salvetti $K(\pi, 1)$, $\qsal(W)$, to the non-crossing partition $K(\pi, 1)$, $\qncp(W)$.  
In fact, the proof of Proposition 1B.9 of \cite{H} shows that such an equivalence can be constructed 
recursively on skeleta.  This construction begins with an inclusion of the 
$1$-skeleton of $\qsal(W)$ into the $1$-skeleton of 
$\qncp(W)$, since the standard generators of the Artin group of finite type are a subset of the 
non-crossing partition generators, 
and concludes with an expression for the image of the
single top-dimensional cell  of $\qsal(W)$ as a combination of  top-dimensional cells of $\qncp(W)$.
In the universal covers, this gives a triangulation of the permutahedron.  
We describe such a triangulation  in the case of an irreducible group, $W$, and show that it induces 
a homotopy equivalence from $\qsal(W)$ to $\qncp(W)$.  
Our triangulation uses a new characterisation of  the Bruhat intervals 
from $w_1$ to  $w_2$, where $w_1^{-1}w_2$ is a 
Coxeter element.
\vskip .2cm
This paper has considerable overlap with \cite{DSBW}, which cites an early report on this work (\cite[Remark 1.1]{DSBW}).   In \cite{DSBW}, a height function is used to show that the special $n$-simplices of section \ref{triangfacets} below determine a  triangulation of the permutahedron, without the assumption (which we will make) that the factorisation of the Coxeter element is bipartite.
\vskip .2cm
The paper is organised as follows.  In section \ref{background}, 
we collect some facts about non-crossing partitions, roots, simplices in $\reals^n$ and the two $K(\pi, 1)$'s.    The
notion of a special $n$-simplex (a top-dimensional simplex
in our triangulation) is introduced in section \ref{triangfacets}. Special $n$-simplices and the above-mentioned Bruhat intervals are also characterised in this section.
Section \ref{permfacechar} characterises the intersections 
of special $n$-simplices with the 
faces of the permutahedron 
while section \ref{dihedral} illustrates the construction in the rank $2$ case.  
In section \ref{panelschar}, we  determine 
the incidences of facets of special $n$-simplices
with other   special $n$-simplices
and with the boundary of the permutahedron.  
In section \ref{triangcrit}, we verify (using criteria from \cite{DRS}) that our set of simplices does indeed give a triangulation and we complete the 
construction of the homotopy equivalence.

\subsection*{Acknowledgements} 
This work started during a  
visit of ED to Dublin City University, whose hospitality and support is acknowledged. 

\section{Background}
\label{background}
\subsection{Non-crossing partitions}\label{NCPsection}
The reflection in the hyperplane orthogonal to a non-zero vector $\theta$ in $\reals^n$ is denoted by $R(\theta)$ and is given by 
 \[R(\theta)v = v-2\left[\frac{\theta\cdot v}{\theta\cdot \theta}\right]\theta \quad \mbox{for all }\ v \in \reals^n.\]
 It follows that, for vectors $\theta_1, \dots , \theta_d$, 
 \begin{equation}\label{multiplereflections}
 R(\theta_1) \cdots R(\theta_d)v = v+a_1\theta_1+\dots + a_d\theta_d
 \end{equation}
 for some $a_1, \dots , a_d \in \reals$.\\
Let $W$ be a finite, irreducible, real reflection group acting effectively on $\reals^n$ and   
let $C$ be a fixed fundamental
chamber with inward unit normals given by the simple roots 
$\alpha_1, \dots , \alpha_n$.  
By \cite{Steinberg}, we can assume these roots have been ordered so that 
$\Pi_1 = \{\alpha_1, \dots , \alpha_k\}$ and $\Pi_2 = \{\alpha_{k+1}, \dots , \alpha_n\}$  
are orthonormal sets.  Set $s_i = R(\alpha_i)$ so that $S = \{s_1, \dots , s_n\}$ is the set of simple reflections and let $c = s_1\cdots s_n$ be the corresponding Coxeter element.  Denote by $h$ the order of $c$ in $W$.  
The set, $T$, of all reflections in $W$ is given by
$T = \{wsw^{-1}\mid  s \in S, w \in W\}$.  
Let  $\{\beta_1, \dots , \beta_n\}$ be the dual basis to $\{\alpha_1, \dots , \alpha_n\}$  
and set $v_0 = \beta_1+  \dots + \beta_n$. Since $v_0 \cdot \alpha_i = 1$ for each $i$,  $v_0$ lies in the interior of the fundamental chamber $C$ and $v_0 \cdot \theta > 0$, for each positive root $\theta$.   For $1\le i \le n$, let $W_i = \langle s_1, \dots , s_{i-1}, s_{i+1}, \dots , s_n \rangle$. 
\vskip .2cm
The total or absolute reflection length function, $l_T: W \to \zed$, is given by
\[l_T(w) = \min\{m \mid w = t_1t_2\cdots t_m \ \mbox{with} \ t_j \in T\}.\]  
The total reflection order on $W$ is defined by
\[u \leq_T w \ \ \mbox{ if and only if } \ \ \ l_T(u) + l_T(u^{-1}w) = l_T(w)\]
and the lattice of non-crossing partitions,  $L$, is the
subset of $W$ consisting of those elements $w$ satisfying $w \leq_T c$.  

The standard length function, $l_S: W \to \zed$, is given by
\[l_S(w) = \min\{k \mid w = r_1r_2\cdots r_{k} \ \mbox{with} \ r_{i} \in S\}. \]
An expression $w = r_1r_2\cdots r_{d}$ for $w$ with $r_i \in S$ is called reduced if $d = l_S(w)$.  
The Bruhat order on $W$ can be defined by $u \le w$ if, for every reduced expression $w = r_1r_2\cdots r_{d}$ for $w$, there is a subword of $r_1r_2\cdots r_{d}$ which is an expression for $u$.    Background on root systems, $l_S$ and the Bruhat order can be found in \cite{Hum90}.

Define $\rho_i = s_1\cdots s_{i-1}\alpha_i$, where $s_{n+i} := s_i$ 
and $\alpha_{n+i} := \alpha_i$.  From \cite[Theorem 6.3]{Steinberg},  $\Phi = \{\rho_1, \dots , \rho_{nh}\}$ is the set of all unit roots and  
$\Phi_+ = \{\rho_1, \dots , \rho_{nh/2}\}$ is the set of positive unit roots.   In particular, $|T| = nh/2$. 

Since $\Pi_1$ and $\Pi_2$ are orthogonal sets 
the first $n$ positive roots are  
\[\alpha_1, \dots , \alpha_k, -c\alpha_{k+1}, \dots , -c\alpha_n,\]
in that order.  
It can be shown (using the ideas in the proof of Proposition~3.18 of \cite{Hum90}) that the last $n$ positive roots are 
$-c^{-1}\alpha_{1}, \dots , -c^{-1}\alpha_k$ (possibly permuted)  followed by $ \alpha_{k+1}, \dots , \alpha_n$ (possibly permuted).  
\vskip .2cm
As in \cite{Lattices}, we define the vectors $\mu_1, \dots , \mu_{nh}$ by 
\[\mu_i = s_1\cdots s_{i-1}\beta_i, \ \mbox{where} \ s_{n+i} := s_i \ \mbox{and} \ \beta_{n+i} := \beta_i.\]
We recall that $\mu_i = \mu(\rho_i)$, where $\mu$ is the linear transformation given by $\mu = -2(c-I)^{-1}$ and note that $\mu$ commutes with $c$.  Furthermore, for any unit length root $\theta$, $\mu(\theta)$ is the unique 
vector which lies in the one-dimensional fixed space of $R(\theta)c$ and satisfies $\mu(\theta)\cdot \theta = 1$ (see Corollary~4.2 of \cite{Lattices}).   It follows that  
$R(\theta)\mu(\theta) = c\mu(\theta)$ and that $c\mu(\theta)\cdot\theta = -1$.

\subsection{Bases of roots and their duals}
\label{rootsduals}
Suppose $\theta_1, \dots , \theta_n$ are unit length roots with 
$c = R(\theta_1)R(\theta_2)\cdots R(\theta_n)$.  Then  $\{\theta_1, \dots , \theta_n\}$ is a basis for $\reals^n$ since $c$ has trivial fixed space.  Furthermore
\begin{equation}\label{E-rollright}
c = R(c\theta_{i})\cdots R(c\theta_{n})R(\theta_1)\cdots R(\theta_{i-1})
\end{equation}
and 
\begin{equation}\label{E-rollleft}
c = R(\theta_{i})\cdots R(\theta_n)R(c^{-1}\theta_1)\cdots R(c^{-1}\theta_{i-1}).
\end{equation}
\begin{proposition}\label{rollrightleft}  (a) The vector $\mu(\theta_i)$  is orthogonal to each of the 
vectors $c^{-1}\theta_1, \dots , c^{-1}\theta_{i-1}, \theta_{i+1}, \dots , \theta_{n}$.\\
(b)  The vector $\theta_i$  is orthogonal to each of the 
vectors $\mu(\theta_1), \dots , \mu(\theta_{i-1}),$ $ \mu(c\theta_{i+1}), \dots , \mu(c\theta_{n})$.
\end{proposition}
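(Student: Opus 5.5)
The plan is to use the characterisation of $\mu(\theta)$ recalled above: it spans the fixed space of $R(\theta)c$ and is normalised by $\mu(\theta)\cdot\theta=1$. Everything then comes down to computing fixed spaces of partial products of reflections, using the two ``rolled'' factorisations \eqref{E-rollright} and \eqref{E-rollleft}. A useful general fact comes from \eqref{multiplereflections}. If $\gamma_1,\dots,\gamma_m$ are linearly independent and $v$ is fixed by $R(\gamma_1)\cdots R(\gamma_m)$, then $v$ is orthogonal to every $\gamma_j$. The reason is that $R(\gamma_1)\cdots R(\gamma_m)v-v=\sum a_j\gamma_j$, and peeling off the reflections one at a time, starting from the rightmost, shows inductively that each coefficient $a_j$ vanishes exactly when $\gamma_j\cdot(\text{current vector})=0$.

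For (a), multiply \eqref{E-rollleft} on the left by $R(\theta_i)$. This gives
\[R(\theta_i)c=R(\theta_{i+1})\cdots R(\theta_n)R(c^{-1}\theta_1)\cdots R(c^{-1}\theta_{i-1}),\]
a product of $n-1$ reflections in the vectors $\theta_{i+1},\dots,\theta_n,c^{-1}\theta_1,\dots,c^{-1}\theta_{i-1}$. These $n-1$ vectors, together with $\theta_i$, come from the factorisation \eqref{E-rollleft} of $c$, so they form a basis. In particular the $n-1$ vectors are linearly independent. Now $\mu(\theta_i)$ is fixed by this product. Apply the general fact: working from the rightmost factor inward, $\mu(\theta_i)$ must be orthogonal to each of $c^{-1}\theta_{i-1},\dots,c^{-1}\theta_1,\theta_n,\dots,\theta_{i+1}$. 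This is exactly (a).

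The one step needing care is the linear-independence argument that forces each coefficient $a_j$ to vanish. I would do it by induction on $m$. Write $R(\gamma_1)\cdots R(\gamma_m)v=v$ as $R(\gamma_2)\cdots R(\gamma_m)v=R(\gamma_1)v=v-2(\gamma_1\cdot v)\gamma_1$. By \eqref{multiplereflections} the left side equals $v$ plus a combination of $\gamma_2,\dots,\gamma_m$. Comparing coefficients, using independence, gives $\gamma_1\cdot v=0$. Hence $R(\gamma_1)v=v$, so $v$ is fixed by $R(\gamma_2)\cdots R(\gamma_m)$, and we recurse.

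For (b), I would exploit symmetry rather than redo the computation. Part (a) is a statement about an arbitrary factorisation, so apply it to the factorisation in \eqref{E-rollright}, whose $n$ roots are
\[(\theta'_1,\dots,\theta'_n)=(c\theta_i,\dots,c\theta_n,\theta_1,\dots,\theta_{i-1}).\]
Given $j<i$, the root $\theta_j$ sits at position $p=n-i+1+j$. Part (a) at that position says that $\mu(\theta_j)$ is orthogonal to all later entries, in particular to $\theta_i$ whenever $j<i$ (note $\theta_i$ itself is not in this list, so one more roll is needed). It is cleaner to roll so that $\theta_i$ is first. Take
\[(\theta'_1,\dots,\theta'_n)=(\theta_i,\dots,\theta_n,c^{-1}\theta_1,\dots,c^{-1}\theta_{i-1})\]
from \eqref{E-rollleft}. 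Part (a) applied at an index $p>1$ says $\mu(\theta'_p)$ is orthogonal to $c^{-1}\theta'_1,\dots,c^{-1}\theta'_{p-1}$, which includes $c^{-1}\theta_i$.

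Using that $c$ is orthogonal and commutes with $\mu$, the relation $\mu(\theta'_p)\cdot c^{-1}\theta_i=0$ is equivalent to $\mu(c\theta'_p)\cdot\theta_i=0$. The roots $\theta'_p$ with $p>1$ are $\theta_{i+1},\dots,\theta_n$ and $c^{-1}\theta_1,\dots,c^{-1}\theta_{i-1}$. Applying $c$ turns these into $c\theta_{i+1},\dots,c\theta_n$ and $\theta_1,\dots,\theta_{i-1}$, which gives exactly the list in (b).
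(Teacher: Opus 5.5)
Your proposal is correct and follows the paper's proof: (a) reads off the fixed space of $R(\theta_i)c$ from (\ref{E-rollleft}), with your induction proving the containment of that fixed space in the orthogonal complement of the $n-1$ independent roots, which the paper only asserts; (b) is deduced from (a) using that $c$ is orthogonal and commutes with $\mu$. In the write-up, delete the abandoned first attempt at (b). As phrased it is wrong, since $\theta_i$ is not among the later entries. The attempt would in fact work at once: applying (a) to (\ref{E-rollright}) at any position $p>1$ gives orthogonality to $c^{-1}(c\theta_i)=\theta_i$.
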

\begin{proof}  If  $\phi_1, \phi_2, \dots , \phi_n$ are unit length roots and $c = R(\phi_1)R(\phi_2)\cdots R(\phi_n)$ then the vector  $\mu(\phi_1)$ lies in the one-dimensional fixed subspace of $R(\phi_1)c$, which is 
equal to $ \phi_2^{\perp}\cap \dots \cap \phi_n^{\perp}$.
Thus $\mu(\phi_1) \cdot \phi_i = 0$ for $2\le i \le n$.  Part (a) follows from 
equation~(\ref{E-rollleft}).  Part (b) follows from part (a), the 
commuting of $c$ with $\mu$ and the fact that $c$ is an orthogonal transformation.
\end{proof} 
 For each $i = 1,\dots,n$,  define the roots $\epsilon_i$ and $\tau_i$
by
\[\epsilon_{i} = R(\theta_1)\cdots R(\theta_{i-1})\theta_{i}
\mbox{\ \ \ and  \ \ \ } \tau_{i} = R(\theta_n)\cdots R(\theta_{i+1})\theta_{i}.\]
We note that  $\epsilon_i = -c \tau_i$ and that 
\[ c = R(\epsilon_i)[R(\theta_1)\cdots \widehat{R(\theta_i)}\cdots R(\theta_n)]
=  [R(\theta_1)\cdots \widehat{R(\theta_i)} \cdots R(\theta_n)]R(\tau_i).\]
It also follows from the formulae for $\epsilon_i$ and $\tau_i$ and equation~(\ref{multiplereflections}) that both $\{\epsilon_1, \dots , \epsilon_n\}$ 
and $\{\tau_1, \dots , \tau_n\}$ are bases for $\reals^n$.
\begin{example}\label{firstnlastn}
Set $\theta_i = \alpha_i$ for $1\le i\le n$ so that we are considering the factorisation $c = R(\alpha_1)\cdots R(\alpha_n)$.  Then $\epsilon_j = \rho_{j}$ for $1\le j \le n$.   
Since $c\rho_i = \rho_{i+n}$ modulo $nh$, it follows that the first $n$ positive roots, namely $\alpha_1, \dots , \alpha_k, -c\alpha_{k+1}, \dots ,-c\alpha_n$,  are precisely 
the positive unit roots $\theta$ satisfying $\ [c^{-1}\theta]\cdot v_0 < 0$.
Since $\Pi_1$ and $\Pi_2$ are orthogonal sets, we compute
\[\tau_i= \left\{\begin{array}{cl}
    -c^{-1}\alpha_i & \mbox{for}\ 1\le i\le k, \\
    \alpha_i & \mbox{for}\ k+1\le i \le n.
\end{array}\right.\]
These are the last $n$ positive roots and are precisely 
the positive unit roots $\theta$ satisfying $\ [c\theta]\cdot v_0 < 0$.
\end{example}
\begin{proposition}\label{dualbases}
(a)  $\{\mu(\epsilon_1), \dots , \mu(\epsilon_n)\}$ is the dual basis to $\{\theta_1, \dots, \theta_n\}$. \\
(b) $\{\mu(\theta_1), \dots , \mu( \theta_n)\}$ is the dual basis to $\{\tau_1, \dots, \tau_n\}$. \\
(c)  $\{-\mu(c \theta_1), \dots , -\mu(c \theta_n)\}$ is the dual basis to $\{\epsilon_1, \dots, \epsilon_n\}$. 
\end{proposition}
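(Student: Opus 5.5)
Each of the three statements comes down to two facts: an orthogonality relation from Proposition~\ref{rollrightleft} applied to a suitably rotated factorisation of $c$, and the normalisation $\mu(\theta)\cdot\theta = 1$, or $c\mu(\theta)\cdot\theta=-1$, recalled in Section~\ref{NCPsection}. Since each of the families involved is already known to be a basis of $\reals^n$, it suffices to check the pairings $\langle \cdot,\cdot\rangle = \delta_{ij}$ directly.

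For (b) I would use Proposition~\ref{rollrightleft}(a) in a form adapted to the factorisation read from the right. Fix $i$ and consider the factorisation
\[
c = R(\theta_1)\cdots\widehat{R(\theta_i)}\cdots R(\theta_n)\,R(\tau_i).
\]
Rolling this via (\ref{E-rollleft}) so that $R(\tau_i)$ comes first gives
\[
c = R(c^{-1}\tau_i)\,R(c^{-1}\theta_1)\cdots,
\]
which is less convenient. The direct route is better. We have $\mu(\theta_i)\cdot\tau_i = \mu(\theta_i)\cdot R(\theta_n)\cdots R(\theta_{i+1})\theta_i$. By Proposition~\ref{rollrightleft}(a), $\mu(\theta_i)$ is orthogonal to $\theta_{i+1},\dots,\theta_n$, so each reflection $R(\theta_m)$ with $m>i$ fixes $\mu(\theta_i)$. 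Since reflections are self-adjoint, the pairing equals $\mu(\theta_i)\cdot\theta_i=1$.

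For $j\ne i$ I want $\mu(\theta_i)\cdot\tau_j=0$. If $j>i$, then $\tau_j\in\mathrm{span}(\theta_j,\dots,\theta_n)$ by (\ref{multiplereflections}), and Proposition~\ref{rollrightleft}(a) gives $0$. If $j<i$, then $\tau_j = \theta_j + (\text{combination of }\theta_{j+1},\dots,\theta_n)$, and this contains $\theta_i$, so I need another argument. My plan here is to use Proposition~\ref{rollrightleft}(b) in the transposed direction: $\theta_j$ is orthogonal to $\mu(\theta_1),\dots,\mu(\theta_{j-1})$ and to $\mu(c\theta_{j+1}),\dots$, which does not directly help either. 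Instead I expect to show that $R(\theta_n)\cdots R(\theta_{i+1})$ fixes $\mu(\theta_i)$, which gives
\[
\mu(\theta_i)\cdot\tau_j = \mu(\theta_i)\cdot R(\theta_i)R(\theta_{i-1})\cdots R(\theta_{j+1})\theta_j .
\]
Then $R(\theta_i)\mu(\theta_i)$ has to be understood. From $R(\theta_i)c$ fixing $\mu(\theta_i)$, and since $\mu(\theta_i)$ is fixed by every $R(\theta_m)$ with $m>i$, it follows that $R(\theta_i)R(\theta_1)\cdots R(\theta_i)$ fixes $\mu(\theta_i)$.

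This left-hand bookkeeping is the main obstacle. The cleanest fix is probably the following: $\{\tau_j\}$ arises from the factorisation $c^{-1}=R(\theta_n)\cdots R(\theta_1)$ exactly as the $\epsilon$'s arise from $c$. So I would prove (a) first and then deduce (b) by a symmetry argument (replacing $c$ by $c^{-1}$, with $\mu$ changing to $-2(c^{-1}-I)^{-1}=c\mu$), or alternatively via $\epsilon_i=-c\tau_i$.

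For (a), the pairing $\mu(\epsilon_i)\cdot\theta_j$ is handled by the factorisation
\[
c = R(\epsilon_i)\,R(\theta_1)\cdots\widehat{R(\theta_i)}\cdots R(\theta_n).
\]
By the first sentence of the proof of Proposition~\ref{rollrightleft}, $\mu(\epsilon_i)$ is orthogonal to every $\theta_j$ with $j\ne i$. For $j=i$, write $\epsilon_i=\theta_i+\sum_{m<i}a_m\theta_m$ using (\ref{multiplereflections}). Then $1=\mu(\epsilon_i)\cdot\epsilon_i=\mu(\epsilon_i)\cdot\theta_i$. This is clean.

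Part (b) follows symmetrically using
\[
c=R(\theta_1)\cdots\widehat{R(\theta_i)}\cdots R(\theta_n)\,R(\tau_i).
\]
Applying (\ref{E-rollleft}) gives a factorisation beginning $R(\tau_i)\,R(c^{-1}\theta_1)\cdots$ with $c^{-1}$ applied to all the earlier factors. So $\mu(\tau_i)$ is orthogonal to the vectors $c^{-1}\theta_j$ for $j\ne i$, that is, $c\mu(\tau_i)=\mu(c\tau_i)=-\mu(\epsilon_i)$ is orthogonal to $\theta_j$. This reproves (a) rather than giving (b), so for (b) I would apply the same argument to the pairing $\mu(\theta_i)\cdot\tau_j$. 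For $j\ne i$ this uses Proposition~\ref{rollrightleft}(a) for $j>i$. For $j<i$, write $\tau_j=-c^{-1}\epsilon_j$, so that
\[
\mu(\theta_i)\cdot\tau_j=-c\mu(\theta_i)\cdot\epsilon_j,
\]
where $\epsilon_j\in\mathrm{span}(\theta_1,\dots,\theta_j)$. Since $c\mu(\theta_i)=R(\theta_i)\mu(\theta_i)=\mu(\theta_i)-2\theta_i$, and $\theta_i\perp\theta_m$ for $m<i$ is false in general, I would instead rewrite $c\mu(\theta_i)$ using Proposition~\ref{rollrightleft}(a). That gives $c\mu(\theta_i)\perp\theta_1,\dots,\theta_{i-1}$, which finishes (b).

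Part (c) then follows from (a) and (b) together with $\epsilon_i=-c\tau_i$. Since $c$ is orthogonal and commutes with $\mu$,
\[
-\mu(c\theta_i)\cdot\epsilon_j=\mu(c\theta_i)\cdot c\tau_j=\mu(\theta_i)\cdot\tau_j=\delta_{ij}.
\]
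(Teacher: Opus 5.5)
Your argument is correct. Parts (a) and (c) are essentially the paper's proof; the real difference is in (b).

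In (a) you get the diagonal entry from $\mu(\epsilon_i)\cdot\epsilon_i=1$ together with the off-diagonal orthogonality you have just established. The paper instead writes $\mu(\epsilon_i)=\mu(\theta_i)+\mu(x_i)$ and kills $\mu(x_i)\cdot\theta_i$ using Proposition~\ref{rollrightleft}(b). Your version is marginally more self-contained.

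For (b) the paper only says ``analogous to (a)''. The analogue is to move $R(\tau_i)$ to the \emph{end}, $c=R(\theta_1)\cdots\widehat{R(\theta_i)}\cdots R(\theta_n)R(\tau_i)$, and apply Proposition~\ref{rollrightleft}(b) to the last root. That gives $\tau_i\perp\mu(\theta_j)$ for every $j\neq i$ in one stroke. The diagonal entry then follows from $\tau_i\in\theta_i+\mathrm{span}(\theta_{i+1},\dots,\theta_n)$. You wrote down exactly this factorisation, but by rolling $R(\tau_i)$ to the front you only recovered (a).

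Your final route instead fixes $\mu(\theta_i)$ and splits into cases. For $j>i$ you use the $\theta_{i+1},\dots,\theta_n$ half of Proposition~\ref{rollrightleft}(a). For $j<i$ you use the $c^{-1}\theta_1,\dots,c^{-1}\theta_{i-1}$ half together with $\tau_j=-c^{-1}\epsilon_j$. This is valid and never leaves the original factorisation, but it is longer than necessary.

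When writing it up, drop the abandoned attempts. Two of them also contain slips, though neither is used in your final argument. Rolling via (\ref{E-rollleft}) produces a factorisation beginning with $R(\tau_i)$, not $R(c^{-1}\tau_i)$. Replacing $c$ by $c^{-1}$ turns $\mu$ into $-c\mu$, not $c\mu$.
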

\begin{proof}   
(a)  Proposition~\ref{rollrightleft}  applied to $c = R(\epsilon_i)R(\theta_1)\cdots \widehat{R(\theta_i)}\cdots R(\theta_n) $ yields $\mu(\epsilon_i)\cdot \theta_j = 0$ if $i\ne j$.  
 By equation~(\ref{multiplereflections}),
$\epsilon_i = \theta_i +x_i$ for some $x_i \in \mbox{Span}\left(\{\theta_1, \dots , \theta_{i-1}\}\right)$
for $1\le i \le n$.   Hence
\[\mu(\epsilon_i)\cdot \theta_i 
= \mu(\theta_i+x_i)\cdot \theta_i 
= \mu(\theta_i)\cdot \theta_i+\mu(x_i)\cdot \theta_i = 1+0 = 1.\]
 (b)  This is analogous to part (a).  
 \\
(c) This follows from (b) and the identity $\epsilon_i = -c\tau_i $.
\end{proof}
\subsection{Geometry of polytopes}  A convex polytope $P$ is the convex hull of a finite set of points in $\mathbb R^n$. An (affine) hyperplane $H$ is {\em bounding} for $P$ if $H$ meets $P$ and $P$ is contained in one of the closed halfspaces determined by $H$. A {\em face} of $P$ is any intersection of $P$ with a bounding hyperplane. A {\em vertex} of $P$ is any face of dimension $0$ and a {\em facet} is any face whose dimension is one less than the dimension of $P$.  Each facet of an $n$-dimensional polytope, $P$, determines a unique hyperplane  which contains it and is called the {\em support} of the facet. See \cite{Ziegler} for background on the theory of polytopes.

Assume that $\{ u_0, u_1, \dots , u_n\} \subset \reals^n$ is affinely independent and put $v_i = u_i-u_{i-1}$ for $i=1,\dots,n$.  Thus, the only solution to the equations 
\[\sum_{i = 0}^{n}\lambda_i = 0 \ \mbox{and}\ \sum_{i = 0}^{n}\lambda_iu_i = 0\]
is $\lambda_i = 0$ for each $i = 0,1, \dots , n$, or, equivalently, $\{v_1, \dots , v_n\}$ 
is a basis for $\reals^n$.  
Let $\{\hat{v}_1, \dots , \hat{v}_n\}$ be the corresponding dual basis so that 
$v_i\cdot \hat{v}_j = \delta_{ij}$ for $1 \le i,j \le n$.
\begin{proposition}\label{hyperplaneequations}
Let $H$ be a bounding hyperplane  of the $n$-simplex spanned by 
$\{ u_0, u_1, \dots , u_n\}$. Then
\begin{itemize}
\item[(a)]  $H = \{x \in \reals^n: \hat{v}_1\cdot (x - u_1) = 0\}$ if $H$ contains 
$\{u_1, \dots , u_n\}$,
\item[(b)]  $H = \{x \in \reals^n: \hat{v}_n\cdot (x - u_0) = 0\}$ if $H$ contains 
$\{ u_0, \dots , u_{n-1}\}$,
\item[(c)]  $H = \{x \in \reals^n: (\hat{v}_i-\hat{v}_{i+1})\cdot (x - u_0) = 0\}$ if 
$H$ contains\\ $\{ u_0, u_1, \dots , u_{i-1}, u_{i+1},\dots , u_n\}$  for  some $1\le i \le n-1$.
\end{itemize}
\end{proposition}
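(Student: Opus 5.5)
The plan is to write down, for each of the three cases, an explicit linear functional that vanishes on the displacement vectors between the relevant vertices. A facet of an $n$-simplex is spanned by $n$ affinely independent points, so it has a unique supporting hyperplane. It therefore suffices to find a nonzero vector $\nu$ and a base point $p$ among the vertices such that $\nu\cdot(u_j-p)=0$ for every vertex $u_j$ on the facet. Then $\{x:\nu\cdot(x-p)=0\}$ contains the facet, and it is a hyperplane because $\nu\neq 0$. Hence it equals $H$. The only tool needed is to express the differences $u_j-u_0$ through the $v_i$ and pair them with the dual basis:
\[u_j-u_0=v_1+\cdots+v_j, \qquad \hat v_m\cdot(u_j-u_0)=\begin{cases}1 & m\le j,\\ 0 & m>j.\end{cases}\]

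For (a), take base point $u_1$. For $j\ge 1$ we have $u_j-u_1=v_2+\cdots+v_j$. Pairing with $\hat v_1$ gives $0$, since only indices $\ge 2$ occur. So $\hat v_1\cdot(u_j-u_1)=0$ for all $j=1,\dots,n$, and $\hat v_1\neq 0$.

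For (b), take base point $u_0$. For $j\le n-1$, the vector $u_j-u_0=v_1+\cdots+v_j$ involves no $v_n$, so $\hat v_n\cdot(u_j-u_0)=0$.

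For (c), take base point $u_0$ and $\nu=\hat v_i-\hat v_{i+1}$. This is nonzero because the dual basis vectors are linearly independent. For $j\le i-1$, both $\hat v_i$ and $\hat v_{i+1}$ pair to $0$ with $u_j-u_0$. For $j\ge i+1$, both pair to $1$, so the difference is again $0$. The excluded vertex $u_i$ is the only one where the two pairings differ, giving $1-0=1$. This confirms that the hyperplane omits $u_i$, as it must.

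There is no real obstacle. The only point needing a sentence is uniqueness: the $n$ listed vertices are affinely independent, since they form a subset of an affinely independent set. Their affine hull is therefore $(n-1)$-dimensional and lies in exactly one hyperplane, which must be $H$. I would also note that the sign of $\nu$ is immaterial: the statement only identifies $H$ as a set, not a choice of positive side.
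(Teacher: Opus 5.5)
Your proposal is correct and is essentially the paper's argument. The paper checks that $\hat v_i-\hat v_{i+1}$ is orthogonal to the facet's edge vectors $v_1,\dots,v_{i-1},v_i+v_{i+1},v_{i+2},\dots,v_n$, while you check the equivalent vanishing on the partial sums $u_j-u_0$ and also spell out the uniqueness of the hyperplane through $n$ affinely independent points, which the paper uses tacitly.
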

\begin{proof} (c)  The bounding hyperplane  has normal orthogonal to 
$v_1, \dots , v_{i-1},$ $ v_i+v_{i+1}, 
v_{i+2}, \dots, v_n$ and contains $u_0$.  The vectors $\hat{v}_i$ and $\hat{v}_{i+1}$ are both orthogonal to each element of 
$\{v_1, \dots , v_{i-1}, v_{i+2}, \dots, v_n\}$ and $\hat{v}_i-\hat{v}_{i+1}$ is orthogonal to $v_i+v_{i+1}$.
The proofs of (a) and (b)  are similar.
\end{proof}

Recall that a \emph{geometric $p$-simplex} in $\reals^n$ 
is the 
convex hull of an affinely independent set of $p+1$ points. A {\em geometric simplicial complex} is a set of  geometric simplices which contains every face of each of its elements and such that the intersection of any two of its elements is a face of both. The \emph{vertices}  of a geometric simplicial complex are its $0$-simplices.

\begin{definition}
A {\em triangulation} of a convex polytope, $P$, is
a geometric simplicial complex whose vertex set is contained in the vertex set of $P$ and whose union of simplices is equal to $P$. 
\end{definition}

\subsection{The $W$-permutahedron and its boundary}
\begin{definition}
Our preferred version of the permutahedron, $P(W)$, is defined to be the convex hull of the set of points 
$\{w(v_0)\mid w \in W\}$.
\end{definition}
\begin{lemma}\label{negativity}
For each $w \in W$ and $1\le i \le n$, $\beta_i \cdot w(v_0) \le \beta_i\cdot v_0$, with equality if 
and only if $w \in W_i$.  
\end{lemma}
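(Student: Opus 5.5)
The plan is to reduce the inequality to a statement about roots, $v_0 - w(v_0)$ being a nonnegative combination of simple roots, and then extract the $\alpha_i$-coefficient. Since $\{\beta_j\}$ is dual to $\{\alpha_j\}$, for any vector $x=\sum_j x_j\alpha_j$ we have $\beta_i\cdot x = x_i$. Thus it suffices to show two things:
\begin{itemize}
\item[(1)] $v_0 - w(v_0)$ lies in the nonnegative cone spanned by $\alpha_1,\dots,\alpha_n$;
\item[(2)] its $\alpha_i$-coefficient vanishes exactly when $w\in W_i$.
\end{itemize}

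For (1) and (2) I will argue by induction on $l_S(w)$. Write $w = w's_j$ with $l_S(w') = l_S(w)-1$. Then
\[
v_0 - w(v_0) = (v_0 - w'(v_0)) + w'(v_0 - s_j v_0).
\]
Since $s_j v_0 = v_0 - 2(\alpha_j\cdot v_0)\alpha_j = v_0 - 2\alpha_j$, the last term equals $2w'(\alpha_j)$. By the standard exchange property (\cite{Hum90}, Lemma~1.6/Prop.~1.7), $l_S(w's_j)>l_S(w')$ forces $w'(\alpha_j)$ to be a positive root. A positive root is a nonnegative combination of simple roots, so (1) follows inductively. More precisely, unwinding a reduced word $w = r_1\cdots r_d$, with $r_m = s_{j_m}$, gives
\[
v_0 - w(v_0) = 2\sum_{m=1}^{d} r_1\cdots r_{m-1}\alpha_{j_m},
\]
a sum of positive roots. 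These are exactly the positive roots made negative by $w^{-1}$.

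For (2), note that each term is a positive root with nonnegative coefficients, so the $\alpha_i$-coefficient of the sum is zero iff every term has zero $\alpha_i$-coefficient. If $w\in W_i$, choose a reduced word avoiding $s_i$; this is possible since $W_i$ is a standard parabolic subgroup and reduced words of its elements use only its generators. Then every term lies in the span of $\{\alpha_j : j\neq i\}$, so equality holds. Conversely, if $w\notin W_i$, then $s_i$ must appear in every reduced word, again by the parabolic subgroup property, or simply because words in $W$ avoiding $s_i$ give elements of $W_i$. Take the first occurrence, $r_m = s_i$, with $r_1,\dots,r_{m-1}\in W_i$. Then the term $r_1\cdots r_{m-1}\alpha_i$ equals $\alpha_i$ plus a combination of the other simple roots, since elements of $W_i$ only add multiples of $\alpha_j$, $j\neq i$, by equation~(\ref{multiplereflections}). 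So its $\alpha_i$-coefficient is $1>0$, giving strict inequality.

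The main subtle point is the fact that $w'(\alpha_j)>0$ when length increases. This is standard and I will cite it from \cite{Hum90} rather than reprove it. Everything else is bookkeeping with the dual basis.
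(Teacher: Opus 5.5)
Your proof is correct and follows the paper's argument. Your telescoping identity $v_0-w(v_0)=2\sum_{m} r_1\cdots r_{m-1}\alpha_{j_m}$ is exactly the paper's decomposition into the positive roots $\phi_j=s_{i_1}\cdots s_{i_{j-1}}\alpha_{i_j}$, paired with $\beta_i$. The only difference is in showing that equality forces $w\in W_i$. The paper deduces $R(\phi_j)\in W_i$ from $\beta_i\cdot\phi_j=0$, tacitly using that a root in the span of $\{\alpha_j : j\neq i\}$ gives a reflection of $W_i$. Your first-occurrence-of-$s_i$ argument instead exhibits a term with $\alpha_i$-coefficient exactly $1$, using only equation~(\ref{multiplereflections}), which is slightly more elementary and self-contained.
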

\begin{proof}
Choose an $S$-reduced expression  
$s_{i_1}s_{i_2}\cdots s_{i_k}$ for $w$
 and, for $1\le j \le k$, let $\phi_j = s_{i_1}s_{i_2}\cdots s_{i_{j-1}}\alpha_{i_j}$.  
 Then each $\phi_j$ is a positive root, $R(\phi_j) = s_{i_1}\cdots s_{i_{j-1}}s_{i_j}s_{i_{j-1}}\cdots s_{i_1}$ and $w = R(\phi_k)R(\phi_{k-1})\cdots R(\phi_1)$.  \\
Set $w_j = R(\phi_j)R(\phi_{j-1})\cdots R(\phi_1)$ for $j \le k$ and note that $w_j  = s_{i_1}s_{i_2}\cdots s_{i_j}$. 
Thus 
\begin{eqnarray*}
\beta_i\cdot w_j(v_0) &=& \beta_i\cdot R(\phi_j)w_{j-1}(v_0) \\
&=& \beta_i\cdot [w_{j-1}(v_0)-2(\phi_j \cdot w_{j-1}(v_0))\phi_j] \\
&=& [\beta_i\cdot w_{j-1}(v_0)]-2(\phi_j \cdot w_{j-1}(v_0)) [\beta_i\cdot \phi_j]\\
&\le &  \beta_i\cdot w_{j-1}(v_0).
\end{eqnarray*}
The last inequality follows since  $\phi_j\cdot w_{j-1}(v_0) = \alpha_{i_j}\cdot v_0= 1$ and $\beta_i\cdot \phi_j\ge 0$ 
since $\phi_j$ is a positive root.   
Thus $\beta_i\cdot w_j(v_0) \le \beta_i\cdot w_{j-1}(v_0)$, with equality if and only if $\beta_i\cdot \phi_j = 0$.  
Hence, by induction, $\beta_i\cdot w(v_0) \le \beta_i\cdot v_0$.  For the equality part, we note that $\beta_i\cdot w(v_0) = \beta_i\cdot v_0$ if and only if 
$\beta_i\cdot \phi_j = 0$ for each $j$.  
Thus $\beta_i\cdot w(v_0) = \beta_i\cdot v_0$ implies that each $R(\phi_j)$ belongs to $W_i$ 
and, hence, $w \in W_i$.   On the other hand, if $w \in W_i$, then  $w(v_0) = v_0+\sum_{j \ne i}a_j\alpha_j$, for some real numbers $a_j$ (by equation~(\ref{multiplereflections})), so that 
$\beta_i \cdot w(v_0) = \beta_i\cdot v_0$.  
\end{proof}
\if{false}
\begin{corollary}\label{negativecone}  For each $w \in W$,  
$w(v_0)-v_0 = a_1\alpha_1+ \dots +a_n \alpha_n$, 
where $a_i \le 0$ for $1\le i \le n$ and $a_i = 0$ if and only if $w\in W_i$.
\end{corollary}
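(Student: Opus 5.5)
The corollary is Lemma~\ref{negativity} restated in coordinates with respect to the basis of simple roots. Since $\{\alpha_1,\dots,\alpha_n\}$ is a basis of $\reals^n$, we can write uniquely
\[w(v_0)-v_0 = a_1\alpha_1+\dots+a_n\alpha_n.\]
The plan is to read off each coefficient with the dual basis. Because $\{\beta_1,\dots,\beta_n\}$ is dual to $\{\alpha_1,\dots,\alpha_n\}$, we have $\beta_i\cdot\alpha_j=\delta_{ij}$. Pairing the expansion with $\beta_i$ therefore gives
\[a_i = \beta_i\cdot\bigl(w(v_0)-v_0\bigr) = \beta_i\cdot w(v_0)-\beta_i\cdot v_0 .\]

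Lemma~\ref{negativity} states that $\beta_i\cdot w(v_0)\le \beta_i\cdot v_0$, with equality exactly when $w\in W_i$. In terms of $a_i$ this says $a_i\le 0$, and $a_i=0$ if and only if $w\in W_i$, for each $1\le i\le n$. That is the whole statement.

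No step here is a real obstacle: the only content is identifying the coefficients $a_i$ as pairings with the dual basis, and the sign and equality analysis is entirely done in Lemma~\ref{negativity}. If one wanted independent confirmation of the ``if'' direction, equation~(\ref{multiplereflections}) applied to a word for $w$ in the generators of $W_i$ shows directly that $w(v_0)-v_0$ lies in the span of the $\alpha_j$ with $j\ne i$. This is exactly the argument used in the last line of the proof of Lemma~\ref{negativity}.
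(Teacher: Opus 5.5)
Your proof is correct and is essentially the paper's own argument: the paper likewise deduces the corollary from Lemma~\ref{negativity} by using that $\{\beta_1,\dots,\beta_n\}$ is dual to $\{\alpha_1,\dots,\alpha_n\}$. This gives $a_i=\beta_i\cdot w(v_0)-\beta_i\cdot v_0$, and the sign and equality conditions then follow directly from the lemma.
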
 
\begin{proof} This follows from Lemma~\ref{negativity} since $\{\beta_1, \dots , \beta_n\}$ is dual to 
$\{\alpha_1, \dots , \alpha_n\}$.
\end{proof} 
\fi
\begin{proposition}\label{permfacets}
(See also Lemma 7.3.3 of \cite{Davis}).  The faces of $P(W)$ are precisely the convex hulls 
of the sets $\{w(v_0)\mid w\in D\}$ where $D$ is a coset of a standard  parabolic subgroup of $W$.
\end{proposition}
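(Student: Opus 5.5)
We prove both inclusions: every face is of the stated form, and every such convex hull is a face. Throughout we use that $W$ acts by isometries and permutes the orbit $\{w(v_0)\}$, so $P(W)$ is $W$-invariant. Moreover, $w\mapsto w(v_0)$ is injective, since $v_0$ lies in the open chamber $C$ and $W$ acts simply transitively on chambers.

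\emph{Cosets give faces.} Let $D=wW_J$, where $W_J=\langle s_j : j\in J\rangle$ is a standard parabolic subgroup. Since $W$ permutes the vertex set and preserves faces, it suffices to treat $D=W_J$. Put $\beta_{J^c}=\sum_{i\notin J}\beta_i$, and note that $W_J=\bigcap_{i\notin J}W_i$. By Lemma~\ref{negativity}, for every $u\in W$,
\[\beta_{J^c}\cdot u(v_0)\le \beta_{J^c}\cdot v_0,\]
and equality holds if and only if equality holds for each $i\notin J$, that is, if and only if $u\in W_J$. Hence the hyperplane $H=\{x:\beta_{J^c}\cdot x=\beta_{J^c}\cdot v_0\}$ is bounding for $P(W)$. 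The intersection $P(W)\cap H$ is the convex hull of the vertices lying on $H$, because a convex combination of points that lie on one side of $H$ can lie on $H$ only if every point with positive weight lies on $H$. Those vertices are exactly $\{u(v_0): u\in W_J\}$. When $J=S$, one may take $P(W)$ itself, viewing it as the improper face, or else exclude this case by convention.

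\emph{Faces come from cosets.} Let $F=P(W)\cap H$, with $H=\{x:\lambda\cdot x=m\}$ a bounding hyperplane and $P(W)\subseteq\{\lambda\cdot x\le m\}$. As before, $F$ is the convex hull of the vertices $u(v_0)$ that maximise $\lambda\cdot u(v_0)$. Choose one maximiser $w$. Replacing $\lambda$ by $w^{-1}\lambda$ and $F$ by $w^{-1}F$, we may assume that $v_0$ is a maximiser. The key step is then to show that $\lambda$ lies in the closed cone spanned by $\beta_1,\dots,\beta_n$.

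To see this, note that $v_0$ maximises $\lambda\cdot u(v_0)$, so for each simple reflection
\[0\le\lambda\cdot(v_0-s_i v_0)=2(\alpha_i\cdot v_0)(\lambda\cdot\alpha_i)=2\lambda\cdot\alpha_i.\]
Hence $\lambda=\sum_i c_i\beta_i$ with $c_i=\lambda\cdot\alpha_i\ge0$. Let $J=\{j : c_j=0\}$. By Lemma~\ref{negativity}, for every $u\in W$,
\[\lambda\cdot u(v_0)=\sum_i c_i\,\beta_i\cdot u(v_0)\le\lambda\cdot v_0,\]
with equality if and only if $u\in W_i$ for every $i$ with $c_i>0$, that is, if and only if $u\in W_J$. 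So the maximisers are exactly the points $u(v_0)$ with $u\in W_J$, and $F$ is the convex hull of $\{u(v_0) : u\in W_J\}$. Translating back by $w$ gives the coset $wW_J$.

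The main obstacle is this reduction to a dominant functional: showing that some maximising vertex can be moved to $v_0$ and that local maximality there forces $\lambda\cdot\alpha_i\ge0$ for all $i$. The computation is short, but it is the step where the structure of $W$ enters. Everything else follows from Lemma~\ref{negativity} and the elementary fact that a face of a polytope is the convex hull of the vertices it contains.
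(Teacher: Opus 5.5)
Your proof is correct and follows essentially the same route as the paper. You reduce via the $W$-action to faces containing $v_0$, use sums of the $\beta_i$ with Lemma~\ref{negativity} to show the parabolic cosets give faces, and conversely test a supporting functional against the vertices $s_i(v_0)$ to get nonnegative $\beta$-coordinates, after which Lemma~\ref{negativity} identifies the maximisers as $W_J(v_0)$. Beyond the paper, you prove inline (rather than citing Ziegler) that a face is the hull of the orbit points it contains, and you note that $D=W$ gives only the improper face $P(W)$, which the paper's definition of face tacitly excludes.
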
 
\begin{proof} 
Using the $W$ action on $P(W)$ and on cosets we can restrict to  the case of faces which contain $v_0$.  We recall from 
\cite{Ziegler}  that each face of a polytope is the convex hull of those vertices contained in that face. 
Denote the convex hull of the set  
$\{w(v_0)\mid w \in W_i\}$ by $P_i$.    By Lemma~\ref{negativity}, $P_i$ is  a facet of $P(W)$ with support given by $\beta_i\cdot (x - v_0) = 0$.   More generally, it follows from Lemma~\ref{negativity} that the intersection of any collection of $P_i$'s is  a face of $P(W)$ with support given by $u\cdot (x - v_0) = 0$, 
where $u$ is the sum of the corresponding $\beta_i$'s. 

For the opposite inclusion, suppose that $u = a_1\beta_1+ \dots 
+a_n \beta_n$ is a fixed nonzero vector and that 
$u \cdot (x - v_0) \le 0$ for all $x \in P(W)$.  
Then $0\ge u \cdot (s_i(v_0) - v_0) = -2a_i$ gives $a_i \ge 0$ for $1\le i \le n$.  
By Lemma~\ref{negativity}, whenever 
$a_i >0$, the equation $u \cdot(w(v_0) -v_0) = 0$ holds if and only if $w \in W_i$.  Thus the 
set of $w \in W$ satisfying $u \cdot(w(v_0) -v_0) = 0$ is the intersection of those $W_i$ 
for which $a_ i> 0$.  Hence the 
face of $P(W)$ defined by $u \cdot(x -v_0) = 0$ is the intersection of those $P_i$ for which $a_ i> 0$. 
\end{proof} 
\subsection{Two $K(\pi, 1)$'s for a type $W$ Artin group}
We use the notation $\Pi(a,b;m)$ for 
the length $m$ word $abab\cdots $, which starts with $a$  and  alternates  in $a$ and $b$.  
The type  $W$ Artin group, $B(W)$, is given by the presentation 
\begin{equation}\label{E-artin}B(W) = \langle x_1, \dots , x_n\mid \Pi(x_i,x_j;m_{i,j})=  
\Pi(x_j,x_i;m_{i,j})\ \mbox{for}\ i\ne j\rangle,
\end{equation}
where  $m_{i,j} $ is the order of $s_is_j$ in $W$, for $i \ne j$.

\begin{remark}\label{rem:salvetti-identification}
For each face $Q$ of $P(W)$  
there is a unique element, $w_Q\in W$,  with $w_Q(v_0)\in Q$ and satisfying
\[l_S(w_Q) \le l_S(w'), \ \mbox{for all}\ w' \in W \ \mbox{with}\ w'(v_0)\in Q.\]
Thus $w_Q$ is the shortest $S$-length representative of the 
left coset of the standard parabolic  subgroup corresponding to $Q$.
Let $Q$ and $Q'$ be two faces in the same $W$-orbit and let $U$ be the standard parabolic subgroup of $W$, provided by Proposition~\ref{permfacets},  for which 
$\{w_Qu(v_0)\mid u \in U\}$ and $\{w_{Q'}u(v_0)\mid u \in U\}$ are the vertices of $Q$ and $Q'$, respectively. 
 The orthogonal transformation $w_Qw_{Q'}^{-1}$ maps $Q'$ bijectively onto $Q$
since it maps the vertices of $Q'$ bijectively onto those of $Q$.
\end{remark}

\begin{definition}
Let $\qsal(W)$ denote the quotient space of $P(W)$ obtained by identifying any pair of faces $Q$ and $Q'$ in the same $W$-orbit via the homeomorphism $w_{Q}w_{Q'}^{-1}$ as in Remark~\ref{rem:salvetti-identification}. By \cite[Theorem 1.4]{Salv94}, $\qsal(W)$ is a classifying space for $B(W)$, and we call it the {\em Salvetti $K(\pi,1)$}.
\end{definition}
\begin{remark}\label{sal-fg}
Since the identifications respect the CW-structure given by the faces of $P(W)$, the images of faces of $P(W)$ give a CW-structure on $\qsal(W)$. 
The $2$-skeleton of $\qsal(W)$ is the presentation $2$-complex of the 
presentation in Equation~\ref{E-artin}.  Denote by $y_0$ the single vertex of $\qsal(W)$ corresponding to  the equivalence class of the vertex $v_0$ of $P(W)$ and, for $1\le i \le n$, denote by $e_i$ the loop in $\qsal(W)$ based at $y_0$ corresponding to the equivalence class of the edge 
with endpoints $v_0$ and $s_i(v_0)$ in $P(W)$ (which we orient away from $v_0$). 
Every $2$-dimensional face of $P(W)$ is 
identified with one 
 whose vertex set is 
$\{\langle s_i, s_j\rangle(v_0)\}$, for some $1\le i<j \le n$, by Proposition~\ref{permfacets}. 
Thus there is a $2$-cell in $\qsal(W)$ for every pair $s_i,s_j$ of distinct elements in $S$.  
The attaching map for this $2$-cell is given by $\alpha\circ \beta^{-1}$, where $\alpha$ reads the alternating sequence of $m_{i,j}$ oriented loops  in $e_i$ and $e_j$, starting with $e_i$, while $\beta$ 
reads the corresponding alternating sequence of $m_{i,j}$ oriented loops starting with $e_j$.  It follows  
that the function which maps the homotopy class of $e_i$ to $x_i$, for $i = 1, \dots , n$, induces an isomorphism of $\pi_1(\qsal(W),y_0)$ with $B(W)$.
\end{remark}

Recall that $L=\{w \in W : w \le_T c\}$ is the lattice of $W$-noncrossing partitions. The order complex, $\Delta(L)$, is  the abstract simplicial complex 
whose $p$-simplices are those $(p+1)$-tuples,
$(w_0, w_1, \dots , w_p)$, for which 
$w_0<_T w_1 <_T  \dots <_T w_p$ 
in $(L,\le_T)$. Let $\vert L \vert$ be any topological realization of $\Delta(L)$. 
\begin{definition}
We denote by $\qncp(W)$ the quotient of $\vert L\vert$ obtained by identifying the cell of $(w_0, w_1, \dots , w_p)$ with the cell of $(e, w_0^{-1}w_1, \dots, w_0^{-1}w_p)$.
We call it the {\em non-crossing partition $K(\pi, 1)$ for B(W)}. 
(The space $\qncp(W)$ is shown to be a $K(\pi, 1)$ for $B(W)$ in \cite{Bessis} and \cite{BWKayPie}.)
\end{definition}

\begin{remark}\label{ncp-fg}
The cell structure of $\vert L \vert$ induces a CW-structure on $\qncp(W)$ with a single vertex, $z$ say, and one $1$-cell $u_w$ for every $e<w\le c$. The cell $u_w$ is the loop in $\qncp(W)$ given as the quotient of the geometric realization of the edge $(e,w)$ in $\vert L\vert$, which we orient away from $e$. This orients the loop $u_w$. As proved in \cite{Bessis} and \cite{BWKayPie}, mapping $x_i$ to the homotopy class of $u_{s_i}$ induces an isomorphism $B(W)\simeq \pi_1(\qncp(W),z)$.  
\end{remark}

\begin{lemma}\label{iso-fg} 
Mapping the homotopy class of $e_i$ to the homotopy class of $u_{s_i}$ induces an isomorphism $\pi_1(\qsal(W),y_0) \simeq \pi_1(\qncp(W),z)$.
\end{lemma}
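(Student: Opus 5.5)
The plan is to compose the two isomorphisms already recorded in Remarks~\ref{sal-fg} and~\ref{ncp-fg}. By Remark~\ref{sal-fg}, the assignment $[e_i]\mapsto x_i$ induces an isomorphism $\Psi_S:\pi_1(\qsal(W),y_0)\to B(W)$. By Remark~\ref{ncp-fg}, the assignment $x_i\mapsto [u_{s_i}]$ induces an isomorphism $\Psi_N: B(W)\to \pi_1(\qncp(W),z)$. The composite $\Psi_N\circ\Psi_S$ is then an isomorphism $\pi_1(\qsal(W),y_0)\to\pi_1(\qncp(W),z)$. It sends $[e_i]$ to $[u_{s_i}]$ for each $i$, so it is the map described in the statement.

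The one point to check is that the statement's phrase ``mapping $[e_i]$ to $[u_{s_i}]$ induces'' names a well-defined homomorphism. The classes $[e_i]$, $1\le i\le n$, generate $\pi_1(\qsal(W),y_0)$, because $\qsal(W)$ has a single vertex and its $1$-cells are exactly the $e_i$: every edge of $P(W)$ lies in the $W$-orbit of an edge from $v_0$ to $s_i(v_0)$, by Proposition~\ref{permfacets}. A homomorphism is therefore determined by the images of the $[e_i]$, and $\Psi_N\circ\Psi_S$ realises the prescribed images. Hence this composite is the unique homomorphism meeting the statement's description.

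If one prefers to argue directly instead of through the composition, the plan is as follows. First, check that the relations $\Pi(e_i,e_j;m_{i,j})=\Pi(e_j,e_i;m_{i,j})$, read off from the attaching maps of the $2$-cells of $\qsal(W)$ in Remark~\ref{sal-fg}, hold for the loops $u_{s_i}$ in $\qncp(W)$. Each $2$-simplex $(e,w,w')$ of $\Delta(L)$ gives the relation $u_wu_{w^{-1}w'}=u_{w'}$. Both alternating products then equal $u_{w}$ with $w=\Pi(s_i,s_j;m_{i,j})$ computed in $W$, which is the longest element of $\langle s_i,s_j\rangle$ and lies below $c$ in $\le_T$. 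Second, show bijectivity by invoking Bessis' presentation theorem. This route is longer, and its only real difficulty is the dual-braid relation bookkeeping. For that reason I would present the short composition argument, which rests entirely on the cited results \cite{Salv94}, \cite{Bessis} and \cite{BWKayPie}.
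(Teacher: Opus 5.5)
Your main argument, composing the isomorphism $[e_i]\mapsto x_i$ of Remark~\ref{sal-fg} with $x_i\mapsto[u_{s_i}]$ of Remark~\ref{ncp-fg}, is exactly the paper's proof, and your check that the $[e_i]$ generate $\pi_1(\qsal(W),y_0)$ is correct. Do discard the optional direct route: for $m_{i,j}\ge 3$ the alternating products cannot equal a single $u_w$, because $u_w\mapsto l_T(w)$ defines a homomorphism $\pi_1(\qncp(W),z)\to\zed$ (each relation $u_wu_{w^{-1}w'}=u_{w'}$ is homogeneous) that sends the alternating word to $m_{i,j}$ but sends $u_w$ to $l_T(w)\le 2$; moreover, for even $m_{i,j}\ge 4$ the longest element of $\langle s_i,s_j\rangle$ is not even $\le_T c$, so the braid relations would instead have to come from the several minimal factorisations of $s_is_j$ (for $i<j$).
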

\begin{proof}
The isomorphism is obtained by composing the isomorphisms of Remarks~\ref{sal-fg}~and~\ref{ncp-fg}.
\end{proof}

\begin{remark}
It will be useful to us later (Theorem~\ref{htyequiv}) that the space $\qncp(W)$ has the structure of a $\Delta$-complex or trisp.  For background on  $\Delta$-complexes and trisps see, respectively, \cite[Section 2.1]{H} and \cite[Section 2.3]{K}.  The 
$\Delta$-complex structure here is a special case of the \emph{interval complex} structure of 
\cite[Definition~2.8]{PaoliniSalvetti}.
\end{remark}

\section{Special $n$-simplices}
\label{triangfacets}
\begin{proposition}\label{affineindependent} 
Assume that $w_0,\dots,w_n \in W$ and 
$e \lessdot_T w_0^{-1}w_1 \lessdot_T \cdots \lessdot_T w_0^{-1}w_n$ is a maximal 
chain in the non-crossing partition lattice. 
Then the set $\{w_0(v_0), w_1(v_0), \dots , w_n(v_0)\}$  is  affinely 
independent, thus forming the vertex set of an $n$-simplex in $P(W)$.
\end{proposition}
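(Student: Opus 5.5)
Since the orthogonal transformation $w_0$ preserves affine independence, I would first reduce to the case $w_0=e$. Applying $w_0^{-1}$ to all the points replaces $w_i$ by $w_0^{-1}w_i$, and the hypothesis involves only these products. So assume $w_0=e$ and $e\lessdot_T w_1\lessdot_T\cdots\lessdot_T w_n$ is a maximal chain in $L$, with $w_n=c$. Each cover relation means $w_i=w_{i-1}R(\theta_i)$ for a reflection $R(\theta_i)$ with $\theta_i$ a unit root, and $l_T(w_i)=i$. Hence $w_i=R(\theta_1)\cdots R(\theta_i)$ and $c=R(\theta_1)\cdots R(\theta_n)$.

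As in the subsection on polytopes, put $u_i=w_i(v_0)$ and $v_i=u_i-u_{i-1}$. It suffices to show that $\{v_1,\dots,v_n\}$ is a basis of $\reals^n$. Now
\[v_i=w_{i-1}R(\theta_i)v_0-w_{i-1}v_0=-2(\theta_i\cdot v_0)\,w_{i-1}\theta_i=-2(\theta_i\cdot v_0)\,\epsilon_i,\]
where $\epsilon_i=R(\theta_1)\cdots R(\theta_{i-1})\theta_i$ is exactly the root defined in the subsection on bases of roots and their duals.

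That subsection already records, via equation~(\ref{multiplereflections}), that $\{\epsilon_1,\dots,\epsilon_n\}$ is a basis, since it is unitriangular with respect to the basis $\{\theta_1,\dots,\theta_n\}$. That $\{\theta_i\}$ is a basis follows because $c$ has trivial fixed space. So the $v_i$ form a basis precisely when every scalar $\theta_i\cdot v_0$ is nonzero.

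This nonvanishing is immediate. Each $\theta_i$ is a root, hence $\pm$ a positive root, and $v_0\cdot\theta>0$ for every positive root $\theta$. Thus $\theta_i\cdot v_0\neq 0$, the $v_i$ are nonzero multiples of the basis vectors $\epsilon_i$, and the points $u_0,\dots,u_n$ are affinely independent.

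The only step needing care is the first: justifying that a maximal chain in $L$ corresponds to a factorisation of $c$ into $n$ reflections with the stated partial products. This follows directly from the definition of $\le_T$ together with $l_T(c)=n$. Finally, all the points $w_i(v_0)$ are vertices of $P(W)$, so the simplex they span lies in $P(W)$ by convexity.
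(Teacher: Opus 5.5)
Your proof is correct and is essentially the paper's argument: the paper also writes the consecutive differences as $-2(\theta_{i}\cdot v_0)\,w_0(\epsilon_{i})$ and uses the basis $\{\epsilon_1,\dots,\epsilon_n\}$ from \S\ref{rootsduals}. The only differences are cosmetic. The paper does not first translate by $w_0^{-1}$ but carries $w_0$ along, and it chooses the $\theta_i$ positive so that $\theta_i\cdot v_0>0$ directly.
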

\begin{proof}  There exist unique positive, unit  roots $\theta_1, \dots , \theta_n$ so that 
$R(\theta_i) = w_{i-1}^{-1}w_i$, for $1 \le i \le n$, and $c = R(\theta_1)\cdots R(\theta_n)$.   
In \S~\ref{rootsduals}, we saw that $\{\epsilon_1, \dots , \epsilon_n\}$ is a basis for $\reals^n$.
As $\theta_{i+1}\cdot v_0>0$ for $0 \le i \le n-1$, and
\begin{eqnarray*}
w_{i+1}(v_0)-w_i(v_0) &=& w_iR(\theta_{i+1})(v_0)-w_i(v_0)\\
&=& -2(\theta_{i+1}\cdot v_0)w_i(\theta_{i+1})\\
&=& -2(\theta_{i+1}\cdot v_0)w_0(\epsilon_{i+1}),
\end{eqnarray*} 
the set $\{w_{1}(v_0)-w_0(v_0), w_{2}(v_0)-w_1(v_0), \dots , w_{n}(v_0)-w_{n-1}(v_0)\}$ 
is linearly  independent  and, hence,  $\{w_0(v_0), w_1(v_0), \dots , w_n(v_0)\}$ 
is affinely independent.  
\end{proof}

The $n$-simplices which appear in our triangulation of $P(W)$ satisfy a further restriction.

\begin{definition}\label{facets} 
Let $(w_0, w_1, \dots , w_n)$ be an ordered subset of $W$ such that
\begin{itemize}
	\item[(i)] $e \lessdot_T w_0^{-1}w_1 \lessdot_T \cdots \lessdot_T w_0^{-1}w_n=c$ is a maximal 
	chain in the non-crossing partition lattice, and 
	\item[(ii)]  
    $l_S(w_i) = l_S(w_0)+i$ for $1 \le i \le n$.
\end{itemize}
Then the simplex spanned by $\{w_0(v_0), w_1(v_0), \dots , w_n(v_0)\}$  
is called a \textbf{special $n$-simplex of $P(W)$} and 
is denoted by 
$\sns{w_0}{w_1}{w_n}$. 
\end{definition}
\begin{remark} 
\label{genquot}
Suppose $u\in W$ has the property that  $l_S(uc) = l_S(u)+n$. (The set of all such $u$ is the generalized quotient $W/\{c\}$, in the terminology of 
\cite{BjWa}.)   Then the convex hull of
\[\{u(v_0), us_1(v_0), us_1s_2(v_0), \dots ,  uc(v_0)\}\]
 forms a special $n$-simplex.   
Furthermore, if 
$\sns{w_0}{w_1}{w_n}$ is a special $n$-simplex with $w_0 = u$, then 
$(w_0, w_1,\dots,  w_n)$ is a maximal chain in the Bruhat interval from $u$ to $uc$ in $W$.   
\end{remark} 
\subsection{Characterising special $n$-simplices}
\label{facetschar}
In this subsection, 
we fix  $w_0 \in W$ and a factorisation $c = R(\theta_1)R(\theta_2)\cdots R(\theta_n)$ where 
 $\theta_1, \dots , \theta_n$  are  positive, unit roots.  
 For $1\le j \le n$, define $w_j \in W$ by
\[w_j = w_0R(\theta_1)R(\theta_2)\cdots R(\theta_j).\] 
As $\{\mu(\theta_1), \mu(\theta_2), \dots, \mu(\theta_n)\}$ and $\{\tau_1,\dots,\tau_n\}$
are dual bases of $\reals^n$, by part (b) of Proposition~\ref{dualbases}, we have 
\begin{equation}
\label{E-coeffs}
w_n^{-1}(v_0) = a_1 \mu(\theta_1)+ \dots + a_n\mu(\theta_n)
\end{equation}
with 
\begin{equation}
\label{E-coeffs2}
a_j = \tau_j \cdot w_n^{-1}(v_0)= w_n(\tau_j)\cdot v_0 \neq 0 \mbox{ \ for } j= 1,\dots,n.
\end{equation}
The final inequality holds since $w_n(\tau_j)$ is a root and $v_0$ does not lie on any  reflection hyperplane of $W$.

\begin{proposition}\label{basischange} 
For $1\le j \le n$, we have
\[w_j^{-1}(v_0) = a_1\mu(\theta_1)+ \dots + a_j\mu(\theta_j) 
+ a_{j+1}\mu(c \theta_{j+1}) +\dots +a_n\mu(c \theta_n).\]
\end{proposition}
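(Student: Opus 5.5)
The plan is a descending induction on $j$. The base case is $j=n$, which is exactly equation~(\ref{E-coeffs}). The inductive step passes from $j$ to $j-1$ by applying a single reflection.

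First, from the definition $w_j = w_0R(\theta_1)\cdots R(\theta_j)$ we have $w_j = w_{j-1}R(\theta_j)$. Since reflections are involutions, this gives
\[w_{j-1}^{-1}(v_0) = R(\theta_j)\,w_j^{-1}(v_0).\]
So, assuming the formula for $w_j^{-1}(v_0)$, it suffices to compute $R(\theta_j)$ applied to
\[a_1\mu(\theta_1)+ \dots + a_j\mu(\theta_j) + a_{j+1}\mu(c \theta_{j+1}) +\dots +a_n\mu(c \theta_n)\]
term by term, using linearity.

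Next, part (b) of Proposition~\ref{rollrightleft} says $\theta_j$ is orthogonal to $\mu(\theta_1),\dots,\mu(\theta_{j-1})$ and to $\mu(c\theta_{j+1}),\dots,\mu(c\theta_n)$. The reflection $R(\theta_j)$ therefore fixes all of these vectors. The only term that changes is $a_j\mu(\theta_j)$.

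For that term, I would use the fact recorded in \S\ref{NCPsection} that $R(\theta)\mu(\theta) = c\mu(\theta)$ for any unit root $\theta$. Combined with the commutation of $\mu$ and $c$, this gives $R(\theta_j)\mu(\theta_j) = \mu(c\theta_j)$. Substituting yields
\[w_{j-1}^{-1}(v_0) = a_1\mu(\theta_1)+\dots+a_{j-1}\mu(\theta_{j-1}) + a_j\mu(c\theta_j)+\dots+a_n\mu(c\theta_n),\]
which is the claimed formula at $j-1$. This completes the induction, and it even extends down to $j=0$.

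There is no real obstacle here. The only point needing care is matching the index ranges in Proposition~\ref{rollrightleft}(b) with the orthogonality needed at step $j$. In particular, the vectors $\mu(c\theta_i)$ with $i>j$ are the ones fixed by $R(\theta_j)$, so the induction must run downward from $n$ rather than upward.
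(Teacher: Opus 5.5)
Your proof is correct and follows the paper's argument: descending induction from the base case $j=n$ given by equation~(\ref{E-coeffs}), with Proposition~\ref{rollrightleft}(b) showing that $R(\theta_j)$ fixes every term except $a_j\mu(\theta_j)$, and $R(\theta_j)\mu(\theta_j)=c\mu(\theta_j)=\mu(c\theta_j)$ handling that term. You are also right that the step extends to $j=0$, and that case is what Proposition~\ref{char} implicitly uses to match its coefficients with those in Proposition~\ref{lengthen}.
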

\begin{proof} Use reverse induction on $j$. The case $j=n$ is true by equation~(\ref{E-coeffs}).
   Assume now that $j\le n$ and that
\[w_{j}^{-1}(v_0) = a_1\mu(\theta_1)+ \dots + a_{j}\mu(\theta_{j}) 
+ a_{j+1}\mu(c \theta_{j+1}) +\dots +a_n\mu(c \theta_n).\]
By part (b) of Proposition~\ref{rollrightleft}, $\theta_j$ is orthogonal to each of the vectors 
$\mu(\theta_1), \dots,  \mu(\theta_{j-1})$ and $\mu(c \theta_{j+1}), \dots,  \mu(c \theta_n)$.  Hence  
\begin{eqnarray*}
w_{j-1}^{-1}(v_0) &=& R(\theta_j)w_{j}^{-1}(v_0)\\
&=& R(\theta_j)[a_1\mu(\theta_1)+ \dots + a_{j-1}\mu(\theta_{j-1})]+ R(\theta_j)[a_{j}\mu(\theta_{j})]\\
&&+ R(\theta_j)[a_{j+1}\mu(c \theta_{j+1}) +\dots +a_n\mu(c \theta_n)] \\
&=& a_1\mu(\theta_1)+ \dots + a_{j-1}\mu(\theta_{j-1})+ R(\theta_j)[a_{j}\mu(\theta_{j})]\\
&&+ a_{j+1}\mu(c \theta_{j+1}) +\dots +a_n\mu(c \theta_n) \\
&=& a_1\mu(\theta_1)+ \dots + a_{j-1}\mu(\theta_{j-1})+ a_{j}\mu(c\theta_{j})\\
&&+ a_{j+1}\mu(c \theta_{j+1}) +\dots +a_n\mu(c \theta_n),
\end{eqnarray*}
since $R(\theta_j)\mu(\theta_{j}) = c\mu(\theta_{j}) = \mu(c\theta_{j})$.
\end{proof}

\begin{proposition}
\label{lengthen}
$l_S(w_{j}) > l_S(w_{j-1})$ if and only if $a_j < 0$.  
\end{proposition}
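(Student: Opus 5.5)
The plan is to combine the standard length criterion for multiplying by a reflection with Proposition~\ref{basischange}, which identifies $a_j$ as the pairing of $v_0$ with a root.

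First, the criterion. Since $w_j = w_{j-1}R(\theta_j)$ and $\theta_j$ is a positive root, the standard fact from \cite{Hum90} applies: for a reflection $t = R(\theta)$ with $\theta$ positive, $l_S(wt) > l_S(w)$ if and only if $w(\theta)$ is a positive root. Because $v_0$ lies in the interior of the fundamental chamber, a root $\phi$ is positive exactly when $\phi\cdot v_0 > 0$. So the claim reduces to
\[
l_S(w_j) > l_S(w_{j-1}) \iff w_{j-1}(\theta_j)\cdot v_0 > 0 .
\]

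Second, compute $a_j$ by pairing the formula of Proposition~\ref{basischange} with $\theta_j$. By part (b) of Proposition~\ref{rollrightleft}, $\theta_j$ is orthogonal to $\mu(\theta_1),\dots,\mu(\theta_{j-1})$ and to $\mu(c\theta_{j+1}),\dots,\mu(c\theta_n)$. Also $\mu(\theta_j)\cdot\theta_j = 1$. Hence
\[
\theta_j\cdot w_j^{-1}(v_0) = a_j.
\]
Since $w_j$ is orthogonal, the left side equals $w_j(\theta_j)\cdot v_0$. Now $w_j(\theta_j) = w_{j-1}R(\theta_j)\theta_j = -w_{j-1}(\theta_j)$, so
\[
a_j = -\,w_{j-1}(\theta_j)\cdot v_0 .
\]
Combining this with the first step gives $l_S(w_j) > l_S(w_{j-1})$ if and only if $a_j < 0$.

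I do not expect a real obstacle. The only care needed is the sign bookkeeping, namely $R(\theta_j)\theta_j = -\theta_j$, and checking that the orthogonality relations of Proposition~\ref{rollrightleft}(b) match the index ranges in Proposition~\ref{basischange}. For $j=n$ the formula is just equation~(\ref{E-coeffs}), and the same computation applies. Alternatively, one can pair $w_{j-1}^{-1}(v_0)$ with $\theta_j$ using the $j-1$ case of Proposition~\ref{basischange}. The coefficient of $\mu(c\theta_j)$ is $a_j$, and $\mu(c\theta_j)\cdot\theta_j = c\mu(\theta_j)\cdot\theta_j = -1$. This gives the same identity $w_{j-1}(\theta_j)\cdot v_0 = -a_j$ directly.
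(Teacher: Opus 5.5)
Your proof is correct and follows the paper's argument: both combine Humphreys' criterion ($l_S(w_{j-1}R(\theta_j))>l_S(w_{j-1})$ if and only if $w_{j-1}(\theta_j)$ is positive, equivalently $w_j(\theta_j)$ is negative) with the identity $a_j = w_j(\theta_j)\cdot v_0$. The only difference is how that identity is obtained. The paper reads it off equation~(\ref{E-coeffs2}) using $w_j(\theta_j)=w_n(\tau_j)$, whereas you pair Proposition~\ref{basischange} with $\theta_j$ and use the orthogonality relations of Proposition~\ref{rollrightleft}(b); both derivations are valid.
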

\begin{proof}  Since $w_j=w_{j-1}R(\theta_j)$ and $\theta_j$ is a positive root, 
$l_S(w_{j}) > l_S(w_{j-1})$ if and only if $w_j(\theta_j)$ is a negative
 root (by Proposition 5.7 of \cite{Hum90}),   hence  if and only if 
$w_j(\theta_j)\cdot v_0 <0$. But $w_j(\theta_j)\cdot v_0= w_n(\tau_j)\cdot v_0 = a_j$,
since $w_j(\theta_j)= w_n(\tau_j)$.
\end{proof}

\begin{proposition}\label{char}
The following conditions are equivalent.
\begin{itemize}
\item[(a)]  $\sns{w_0}{w_1}{w_n}$ is a special $n$-simplex.  
\item[(b)]  $w_0^{-1}(v_0) \in \mbox{cone}(\{-\mu(c\theta_1), 
-\mu(c \theta_2), \dots  , -\mu(c \theta_n)\})$. 
\item[(c)]  $w_n^{-1}(v_0) \in \mbox{cone}
(\{-\mu(\theta_1), -\mu( \theta_2), \dots  , -\mu( \theta_n)\})$. 
\end{itemize} 
\end{proposition}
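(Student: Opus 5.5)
Everything reduces to the signs of the coefficients $a_j$ from equation~(\ref{E-coeffs}). In the setting of this subsection, condition (i) of Definition~\ref{facets} holds automatically. Indeed, $c = R(\theta_1)\cdots R(\theta_n)$ with $l_T(c)=n$, so each partial product $R(\theta_1)\cdots R(\theta_j)=w_0^{-1}w_j$ has reflection length exactly $j$ and lies below $c$ in $\le_T$. This gives a maximal chain. Hence (a) is equivalent to condition (ii), that $l_S(w_j)=l_S(w_0)+j$ for all $j$.

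Since each $w_j$ differs from $w_{j-1}$ by a single reflection, $l_S(w_j)-l_S(w_{j-1})$ is an odd integer. The condition $l_S(w_j)=l_S(w_0)+j$ for all $j$ is therefore equivalent to $l_S(w_j)-l_S(w_{j-1})=1$ for every $j$. This step is the main obstacle: Proposition~\ref{lengthen} only tells us that the length \emph{increases}, not by exactly one. To close the gap, use the cumulative constraint. If every step increases, then $l_S(w_n)\ge l_S(w_0)+n$. In the reverse direction, $l_S(w_n)=l_S(w_0c)\le l_S(w_0)+l_S(c)=l_S(w_0)+n$, since $c=s_1\cdots s_n$ has $S$-length at most $n$. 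Hence all steps increase if and only if all steps increase by exactly one. Combining this with Proposition~\ref{lengthen}, (a) holds if and only if $a_j<0$ for all $1\le j\le n$.

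Now translate the sign condition into the cone conditions. For (c), by equation~(\ref{E-coeffs}) we have $w_n^{-1}(v_0)=\sum_j a_j\mu(\theta_j)=\sum_j (-a_j)(-\mu(\theta_j))$. Because $\{\mu(\theta_j)\}$ is a basis (Proposition~\ref{dualbases}(b)), these coefficients are unique. So $w_n^{-1}(v_0)$ lies in the cone on the $-\mu(\theta_j)$ if and only if all $a_j\le 0$, which by equation~(\ref{E-coeffs2}) ($a_j\neq0$) is equivalent to all $a_j<0$. For (b), apply Proposition~\ref{basischange}, extended to $j=0$. The inductive step in its proof works verbatim down to $j=0$, or equivalently one applies $R(\theta_1)$ to the $j=1$ formula. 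This gives $w_0^{-1}(v_0)=\sum_j a_j\mu(c\theta_j)=\sum_j(-a_j)(-\mu(c\theta_j))$. By Proposition~\ref{dualbases}(c), the $-\mu(c\theta_j)$ form a basis, so uniqueness of coefficients again gives that (b) holds if and only if all $a_j<0$. This completes the chain of equivalences (a)$\Leftrightarrow$(b)$\Leftrightarrow$(c).
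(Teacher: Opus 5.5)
Your proof is correct and follows the paper's argument: Proposition~\ref{lengthen} combined with the bound $l_S(w_0)+n \le l_S(w_n) = l_S(w_0c) \le l_S(w_0)+n$ gives (a) $\Leftrightarrow$ all $a_j<0$. Uniqueness of coordinates in the bases $\{\mu(\theta_j)\}$ and $\{\mu(c\theta_j)\}$ then gives the cone conditions; the paper gets (b)$\Leftrightarrow$(c) directly from $w_0^{-1}=cw_n^{-1}$ and $\mu c=c\mu$, which amounts to your extension of Proposition~\ref{basischange} to $j=0$. Your explicit check that condition (i) of Definition~\ref{facets} holds automatically (via $l_T(c)=n$) makes precise a point the paper leaves implicit, while the parity remark is harmless but unnecessary.
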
 
\begin{proof} 
The equivalence of (b) and (c) follows because $w_n^{-1}=c^{-1}w_0^{-1}$ and 
$\mu\circ c=c\circ \mu$.  
We prove that (a) is equivalent to (b).  
\vskip .1cm
(a) $\Rightarrow$ (b):  Assume that $\sns{w_0}{w_1}{w_n}$ is a special $n$-simplex
and write $w_0^{-1}(v_0) = a_1\mu(c \theta_1)+\dots + a_n\mu(c \theta_n)$.  
Since $l_S(w_{i}) = l_S(w_{i-1})+1$,  Proposition~\ref{lengthen}  gives $a_i <0$ for $1\le i\le n$.
 \vskip .1cm
(b) $\Rightarrow$ (a):  Assume that $w_0^{-1}(v_0) = a_1\mu(c \theta_1)+\dots + a_n\mu(c \theta_n)$ with $a_i <0$ for $1\le i\le n$.  Then $l_S(w_{i}) > l_S(w_{i-1})$ for $1\le i\le n$, by
Proposition~\ref{lengthen}, and hence $l_S(w_n)\ge l_S(w_0)+n$.
As  $l_S(w_n) = l_S(w_0c) \le l_S(w_0)+ l_S(c) = l_S(w_0)+  n$, we must have
$l_S(w_n) =  l_S(w_0)+n$ which now forces $l_S(w_{i}) = l_S(w_{i-1})+1 $ 
for $1 \le i \le n$. \end{proof} 

\begin{remark} In \cite{Lattices}, the authors construct a spherical simplicial complex, $X(c)$, 
which models the non-crossing partition lattice, $L$. This
complex gives a corresponding triangulation of the
simplicial cone generated by
 $\{\alpha_1, \dots , \alpha_n\}$ which has a top-dimensional, simplicial cone 
 generated by $\{\rho_{i_1}, \dots , \rho_{i_n}\}$ for each (decreasing) factorisation 
 $c = R(\rho_{i_n})\cdots R(\rho_{i_1})$ with $1\le i_1<i_2< \dots  <i_n\le nh/2$.

Suppose $\sns{w_0}{w_1}{w_n}$ is a special $n$-simplex with $w_i$ and $\theta_i$ as above.  Then 
$-\mu^{-1}[w_n^{-1}(v_0)]$ lies in the simplicial cone generated by 
$\{\alpha_1, \dots  , \alpha_n\}$.  By Corollary~7.5 of 
\cite{Lattices}, there exists  a unique sequence $1\le i_1<i_2< \dots  <i_n\le nh/2$ such that 
$c = R(\rho_{i_n})\cdots R(\rho_{i_1})$ and the simplicial cone generated by $\{\rho_{i_1}, \dots  , \rho_{i_n}\}$ contains the point $-\mu^{-1}[w_n^{-1}(v_0)]$. 
Thus, there is a unique special $n$-simplex with final vertex $w_n$ 
whose associated reflection sequence is $T$-decreasing.  
\end{remark}
\section{Intersections of special $n$-simplices with faces of the Permutahedron}
\label{permfacechar}
In this section 
we characterise each $p$-dimensional  intersection of any special $n$-simplex with a $p$-dimensional face of $P(W)$ as the convex hull of a set of vertices of the face which satisfies properties (i)--(iv) below.  This involves showing how to extend such a set 
of vertices to obtain the vertex set of a special 
$n$-simplex.  This process can  require several  modifications 
of an initial factorisation of $c$ (see proof of Proposition~\ref{extendboundarysimplex2}). 
Throughout this section we will use the 
facts that 
$\Pi_1 = \{\alpha_1, \dots , \alpha_k\}$ and 
$\Pi_2 = \{\alpha_{k+1}, \dots , \alpha_n\}$ are orthonormal sets and that the last $n$ positive roots are 
$-c^{-1}\alpha_{1}, \dots , -c^{-1}\alpha_k$ (possibly permuted)  followed by $ \alpha_{k+1}, \dots , \alpha_n$ (possibly permuted). 

Let $Q$ be a 
$p$-dimensional face of $P(W)$ and
assume that the intersection of $Q$ with 
some special $n$-simplex, 
$\sns{w_0}{w_1}{w_n}$, is  a $p$-dimensional face of
$\sns{w_0}{w_1}{w_n}$. 
By Proposition~\ref{permfacets}, $Q$  is the convex hull 
of the set of translates of $v_0$ by the elements of a coset of some standard  
parabolic subgroup $W'=\langle s_{i_1},\dots,s_{i_p} \rangle$, where $i_1<i_2<\cdots<i_p$.  
Since $l_S(w_i) < l_S(w_j)$ for 
$1\le i<j \le n$ and since   
$w_q^{-1}w_r \le_T s_{i_1}\cdots s_{i_p}$ whenever 
 $q<r$ and  $w_q(v_0), w_r(v_0) \in Q$, 
the $p+1$ vertices in $Q$ must (by the reasoning of the proof of Proposition~\ref{char}) be consecutive in $\sns{w_0}{w_1}{w_n}$.  Thus 
the special $n$-simplex must have the form $\langle w_0|\cdots |w_{j}|\cdots |w_{j+p}|\cdots  |w_n\rangle$
with 
\begin{itemize}
\item[(i)] the vertices $w_{j}(v_0), \dots , w_{j+p}(v_0)$ in $Q$,
\item[(ii)] $w_{j+p}=w_{j}s_{i_1}\cdots s_{i_p}$, 
\item[(iii)] $w_{j}\lessdot_T w_{j+1} \lessdot_T  \cdots \lessdot_T  w_{j+p}$  and 
\item[(iv)] $l_S(w_{i+1}) = l_S(w_i)+1$ for $i = j,  \dots , j+p-1$.
\end{itemize}
Proposition~\ref{extendboundarysimplex2} shows the converse, namely, that 
$(p+1)$ vertices of $Q$, $w_{j}(v_0)$, \dots, $w_{j+p}(v_0)$, which satisfy
conditions (i), (ii), (iii) and  (iv) 
comprise the vertex set of the intersection of $Q$ with some special $n$-simplex. 
\begin{lemma}\label{rollaround}
Assume  that $w \in W$ and $\theta_1,\dots , \theta_n$  are positive roots 
such that 
\(c= R(\theta_1)\cdots R(\theta_{n})\) and 
\(w^{-1}(v_0) = a_1\mu(c \theta_1)+ \dots +a_n\mu(c\theta_n)\).
 If $ \theta_n$ is one of the last $n$ positive roots
then the root $\rho := -c\theta_n$ is positive,
 and satisfies
$c = R(\rho)R(\theta_1)\cdots R(\theta_{n-1})$.   Furthermore,
\[ \left[wR(\rho)\right]^{-1}(v_0) = (-a_n)\mu(c\rho)+a_1\mu(c \theta_1)+ \dots +
a_{n-1}\mu(c\theta_{n-1}).\]
\end{lemma}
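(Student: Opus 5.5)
Three claims need checking: $\rho$ is positive, the new factorisation of $c$ holds, and the coordinate formula for $[wR(\rho)]^{-1}(v_0)$ holds.

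\emph{Positivity of $\rho$.} By Example~\ref{firstnlastn}, the last $n$ positive roots are exactly the positive unit roots $\theta$ with $[c\theta]\cdot v_0<0$. Since $\theta_n$ is one of them, $\rho\cdot v_0=-[c\theta_n]\cdot v_0>0$. As $\rho$ is a root (the image of a root under $-c\in W\cdot\{\pm1\}$), it is positive.

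\emph{The factorisation.} Apply equation~(\ref{E-rollright}) with $i=n$:
\[c=R(c\theta_n)R(\theta_1)\cdots R(\theta_{n-1}).\]
Since $R(c\theta_n)=R(-c\theta_n)=R(\rho)$, this is the claimed factorisation.

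\emph{The coordinate formula.} For the new factorisation, the first root is $\rho$ and the others are $\theta_1,\dots,\theta_{n-1}$. So the claim is that $[wR(\rho)]^{-1}(v_0)=R(\rho)w^{-1}(v_0)$ has coefficients $(-a_n,a_1,\dots,a_{n-1})$ in the basis $\mu(c\rho),\mu(c\theta_1),\dots,\mu(c\theta_{n-1})$. That set is a basis by Proposition~\ref{dualbases}(c), applied to the new factorisation. I would compute $R(\rho)$ on each term of $w^{-1}(v_0)=\sum_j a_j\mu(c\theta_j)$ separately.

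\emph{Terms with $j\le n-1$.} Proposition~\ref{rollrightleft}(b) for the original factorisation with $i=n$ says $\theta_n$ is orthogonal to $\mu(\theta_1),\dots,\mu(\theta_{n-1})$. Now $c$ is orthogonal and commutes with $\mu$, and $\rho=-c\theta_n$, so
\[\rho\cdot\mu(c\theta_j)=-c\theta_n\cdot c\mu(\theta_j)=-\theta_n\cdot\mu(\theta_j)=0.\]
Hence $R(\rho)$ fixes $\mu(c\theta_j)$ for $j\le n-1$.

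\emph{Term $j=n$.} Since $\mu$ is linear, $\mu(c\theta_n)=-\mu(\rho)$. The basic identity $R(\theta)\mu(\theta)=c\mu(\theta)=\mu(c\theta)$ for unit roots then gives
\[R(\rho)\mu(c\theta_n)=-R(\rho)\mu(\rho)=-\mu(c\rho).\]
So $R(\rho)w^{-1}(v_0)=-a_n\mu(c\rho)+\sum_{j<n}a_j\mu(c\theta_j)$, which is the stated formula.

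The only real obstacle is bookkeeping. One must make sure the orthogonality used is the one coming from the \emph{original} factorisation of $c$, transported by $c$. One must also keep the signs straight, since $\rho$ is defined with a minus sign and $\mu$ is linear.
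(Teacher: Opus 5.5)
Your proof is correct and takes essentially the same route as the paper. You obtain the factorisation from equation~(\ref{E-rollright}) with $i=n$, positivity of $\rho$ from the sign of $c\theta_n\cdot v_0$, and orthogonality of $\rho$ to $\mu(c\theta_1),\dots,\mu(c\theta_{n-1})$ via Proposition~\ref{rollrightleft}; you then handle the last term with $R(\rho)\mu(\rho)=c\mu(\rho)=\mu(c\rho)$. The only cosmetic difference is that the paper applies Proposition~\ref{rollrightleft} directly to the conjugated factorisation $c=R(c\theta_1)\cdots R(c\theta_n)$, whereas you apply it to the original factorisation and transport the orthogonality by the orthogonal map $c$; these amount to the same thing.
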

\begin{proof}  Note that $c = R(\rho)R(\theta_1)\cdots R(\theta_{n-1})$ by equation~(\ref{E-rollright}).  
Since $\theta_n$ is one of the last $n$ positive roots, $c\theta_n$ is a negative root and hence $\rho$ is 
positive.  Since $c= R(c\theta_1)\cdots R(c\theta_{n})$, by Proposition~\ref{rollrightleft} the root $c\theta_n$ (and hence $\rho$) is orthogonal to each of $\mu(c\theta_1),\dots,\mu(c\theta_{n-1})$. It follows that
\begin{eqnarray*}
\left[wR(\rho)\right]^{-1}(v_0)&=&R(\rho)\left[a_1\mu(c \theta_1)+ 
\dots + a_{n-1}\mu(c \theta_{n-1})+ a_n\mu(c \theta_n)\right]\\
&=&R(\rho)\left[a_1\mu(c \theta_1)+ \dots + a_{n-1}\mu(c \theta_{n-1})+ (-a_n)\mu(\rho)\right]\\
&=&a_1\mu( c\theta_1)+ \dots + a_{n-1}\mu(c\theta_{n-1})+ (-a_n)R(\rho)\mu(\rho)\\
&=&a_1\mu( c\theta_1)+ \dots + a_{n-1}\mu(c\theta_{n-1})+ (-a_n)c\mu( \rho)\\
&=&(-a_n)\mu(c\rho)+a_1\mu( c\theta_1)+ \dots + a_{n-1}\mu(c\theta_{n-1})
\end{eqnarray*}
since, as noted in \S\ref{NCPsection}, $R(\rho)\mu(\rho)=c\mu(\rho)$ and
$\mu\circ c= c \circ \mu$. 
\end{proof}  
\begin{proposition}\label{rollseveral}
Assume that $w_0 \in W$ and 
$\theta_1,\dots,\theta_{n}$ are positive unit roots such that
\(c = R(\theta_1)\cdots R(\theta_{n})\).  Assume further that 
$\theta_{l+1},\dots , \theta_n$ are among the last $n$ positive roots
and that 
\[ w_0^{-1}(v_0) = a_1\mu(c \theta_1)+ \dots +
a_{n}\mu(c\theta_{n})\]
with $a_1, \dots , a_l$ negative and  $a_{l+1}, \dots , a_n$ positive.   Then  
\[w_0(v_0),  w_1(v_0), \dots , w_{l-1}(v_0),  w_l(v_0)\]
are the vertices of a face of a special $n$-simplex, where  $w_i = w_0R(\theta_1)\cdots R(\theta_{i})$ for 
$i = 1, \dots , l$.
\end{proposition}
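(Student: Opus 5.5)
The plan is to apply Lemma~\ref{rollaround} repeatedly, $n-l$ times. Each application moves the last reflection of the current factorisation of $c$ to the front and flips the sign of its coefficient. At the end all coefficients are negative, and Proposition~\ref{char} then produces a special $n$-simplex whose last $l+1$ vertices are $w_0(v_0),\dots,w_l(v_0)$.

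For $m=l+1,\dots,n$ put $\rho_m=-c\theta_m$. We proceed by induction on $r=0,1,\dots,n-l$ and maintain the following data:
\[u^{(r)}=w_0R(\rho_n)R(\rho_{n-1})\cdots R(\rho_{n-r+1}),\]
\[c=R(\rho_{n-r+1})\cdots R(\rho_n)R(\theta_1)\cdots R(\theta_{n-r}),\]
\[\bigl[u^{(r)}\bigr]^{-1}(v_0)=(-a_{n-r+1})\mu(c\rho_{n-r+1})+\dots+(-a_n)\mu(c\rho_n)+a_1\mu(c\theta_1)+\dots+a_{n-r}\mu(c\theta_{n-r}).\]
All roots in this factorisation are positive unit roots. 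The case $r=0$ is the hypothesis.

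For the inductive step with $r<n-l$, the last root of the current factorisation is $\theta_{n-r}$, and $n-r\ge l+1$. By assumption $\theta_{n-r}$ is among the last $n$ positive roots. So Lemma~\ref{rollaround} applies with $w=u^{(r)}$ and the displayed factorisation. It says three things: $\rho_{n-r}=-c\theta_{n-r}$ is a positive root; it gives the new factorisation $c=R(\rho_{n-r})R(\rho_{n-r+1})\cdots R(\rho_n)R(\theta_1)\cdots R(\theta_{n-r-1})$; and it gives exactly the claimed expansion for $u^{(r+1)}=u^{(r)}R(\rho_{n-r})$, with the new leading coefficient $-a_{n-r}$. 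The only bookkeeping needed is that the lemma puts the rolled root at the front with coefficient $-a_{n-r}$. That matches the ordering above, and the remaining coefficients are carried along unchanged.

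At $r=n-l$, set $u=u^{(n-l)}$ and use the factorisation $c=R(\rho_{l+1})\cdots R(\rho_n)R(\theta_1)\cdots R(\theta_l)$. The coefficients of $u^{-1}(v_0)$ are $-a_{l+1},\dots,-a_n,a_1,\dots,a_l$, and these are all negative by hypothesis. By Proposition~\ref{char}, (b)$\Rightarrow$(a), the partial products $u_0=u$, $u_i=u\cdot(\text{first } i \text{ factors})$ form a special $n$-simplex $\sns{u_0}{u_1}{u_n}$.

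Finally we identify the vertices. Since $u=w_0R(\rho_n)\cdots R(\rho_{l+1})$ and reflections are involutions,
\[u_{n-l}=uR(\rho_{l+1})\cdots R(\rho_n)=w_0.\]
It follows that $u_{n-l+i}=w_0R(\theta_1)\cdots R(\theta_i)=w_i$ for $0\le i\le l$. Thus $w_0(v_0),\dots,w_l(v_0)$ are the last $l+1$ vertices of this special $n$-simplex, and so they span one of its faces.

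The only delicate point is the index bookkeeping in the induction: each application of the lemma must be to the current last root, which is always some $\theta_m$ with $m>l$ and hence satisfies the hypothesis. There is no further analytic obstacle.
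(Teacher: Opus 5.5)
Your proposal is correct and is the paper's own argument: apply Lemma~\ref{rollaround} repeatedly to move $R(c\theta_n),\dots,R(c\theta_{l+1})$ to the front, which flips the signs of $a_{l+1},\dots,a_n$, and then use Proposition~\ref{char}, (b)$\Rightarrow$(a), with $w_0(v_0),\dots,w_l(v_0)$ as the last $l+1$ vertices; your explicit induction just spells out bookkeeping that the paper leaves implicit. One cosmetic point: your $\rho_m=-c\theta_m$ clashes with the paper's enumeration $\rho_1,\dots,\rho_{nh}$ of the unit roots, so a different letter would be safer.
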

\begin{proof}  By repeated application of Lemma~\ref{rollaround}, $-c\theta_n, \dots , -c\theta_{l+1}$ 
are among the first $n$ positive roots,
\[c = R(c\theta_{l+1})\cdots R(c\theta_{n})R(\theta_1)\cdots R(\theta_{l})\]
and  
\begin{eqnarray*}
\left(w_0R(c\theta_n)\cdots R(c\theta_{l+1})\right)^{-1}(v_0) &=& (-a_{l+1})\mu(-c^2\theta_{l+1})+\dots +(-a_n)\mu(-c^2\theta_n)\\
&&+a_1\mu(c \theta_1)+ \dots +
a_{l}\mu(c\theta_{l}).
\end{eqnarray*}
By Proposition~\ref{char}~(b),  there is a special $n$-simplex whose vertices are 
$w_{l-n}(v_0), \dots , w_{-1}(v_0),w_{0}(v_0), \dots , w_{l}(v_0)$, where 
\[w_{-k} = w_0R(c\theta_n)R(c\theta_{n-1})\cdots R(c\theta_{n-k+1}).\]
\end{proof} 
We first extend sets of $n-1$ vertices satisfying (i)--(iv) to special $n$-simplices.  This special case is used in Theorem~\ref{triangulation}.
\begin{proposition}\label{extendtopboundarysimplex}
Fix $i \in \{1,\dots,n\}$ and let $c'$ be the  
standard parabolic Coxeter element given by 
$c' =  s_1\cdots s_{i-1}s_{i+1}\cdots s_n$. Assume that $w_0 \in W$ and 
$\theta_1,\dots,\theta_{n-1}$ are positive unit roots such that
\(c' = R(\theta_1)\cdots R(\theta_{n-1})\).  Assume further that $l_S(w_j) = l_S(w_{j-1})+1$
for $1 \le j \le n-1$, where  $w_j = w_0R(\theta_1)\cdots R(\theta_j)$.
Then $w_0(v_0), w_1(v_0), \dots , w_{n-1}(v_0)$ are the vertices of an $(n-1)$-dimensional face of a  unique special $n$-simplex.   
\end{proposition}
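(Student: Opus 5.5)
The plan is to show that the $n$ given vertices can be completed to a special $n$-simplex in exactly two conceivable ways, one vertex appended at the end or one prepended at the start, and that the sign of a single coefficient decides which of the two works. Set $\tau := s_n s_{n-1}\cdots s_{i+1}\alpha_i$. This is the root $\tau_i$ of Example~\ref{firstnlastn} for the factorisation $c=s_1\cdots s_n$, so it is one of the last $n$ positive roots, and $c = c'R(\tau)$. Hence
\[c = R(\theta_1)\cdots R(\theta_{n-1})R(\theta_n), \qquad \theta_n:=\tau,\]
is a factorisation of $c$ into reflections in positive unit roots. Put $w_n := w_0 c = w_{n-1}R(\tau)$.

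First, expand $w_0^{-1}(v_0) = a_1\mu(c\theta_1)+\dots+a_n\mu(c\theta_n)$ as in Proposition~\ref{basischange} (the $j=0$ case). The hypothesis $l_S(w_j)=l_S(w_{j-1})+1$ for $1\le j\le n-1$, together with Proposition~\ref{lengthen}, gives $a_1,\dots,a_{n-1}<0$. By (\ref{E-coeffs2}), $a_n\neq 0$, so there are two cases.
\begin{itemize}
\item If $a_n<0$, Proposition~\ref{char}(b) shows that $\langle w_0|w_1|\cdots|w_{n-1}|w_n\rangle$ is a special $n$-simplex.
\item If $a_n>0$, then $\theta_n=\tau$ is among the last $n$ positive roots, so Proposition~\ref{rollseveral} applies with $l=n-1$. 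Via Lemma~\ref{rollaround}, it produces the special $n$-simplex $\langle w_0R(c\tau)|w_0|\cdots|w_{n-1}\rangle$. Here $\rho=-c\tau = s_1\cdots s_{i-1}\alpha_i$, so $c=R(\rho)c'$.
\end{itemize}
In either case the given $n$ vertices span an $(n-1)$-face of a special $n$-simplex.

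For uniqueness, I would use the consecutiveness argument given before Lemma~\ref{rollaround}. The $n$ given vertices are joined by a saturated $\le_T$-chain of length $n-1$ and strictly increasing $S$-length. So in any special $n$-simplex containing them they must occupy consecutive positions, either $0,\dots,n-1$ or $1,\dots,n$.
\begin{itemize}
\item In the first position, condition (i) of Definition~\ref{facets} forces the final vertex to be $w_0c=w_n$.
\item In the second, it forces the initial vertex to be $w_{n-1}c^{-1} = w_0c'c^{-1} = w_0R(\rho)$.
\end{itemize}
So there are at most the two candidates above, and it remains to check that they are mutually exclusive.

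The first candidate is special only if $a_n<0$, by Proposition~\ref{char}. For the second, use the factorisation $c=R(\rho)R(\theta_1)\cdots R(\theta_{n-1})$. Lemma~\ref{rollaround} shows that the coefficient of $\mu(c\rho)$ in $[w_0R(\rho)]^{-1}(v_0)$ is $-a_n$, so Proposition~\ref{char} requires $-a_n<0$. Exactly one condition holds, which proves uniqueness.

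The main obstacle I expect is making the consecutiveness and length bookkeeping rigorous. One must check that no special simplex contains these vertices in non-consecutive positions. One must also check that the $\theta$'s of a competing simplex are forced to be the given ones. For the latter, I would note that consecutive vertices determine the reflections $w_{j-1}^{-1}w_j$, and a reflection determines its positive unit root, so the $\theta$'s coincide with the given ones.
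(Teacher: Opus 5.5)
Your proposal is correct and follows the paper's proof essentially step for step: the same extension $c=c'R(\tau_i)$, the same sign dichotomy on $a_n$ (settled by Proposition~\ref{char} when $a_n<0$ and Proposition~\ref{rollseveral} with $l=n-1$ when $a_n>0$), and the same uniqueness argument via the two consecutive-position candidates. Your use of Lemma~\ref{rollaround} to show that the prepended candidate is special only when $a_n>0$ makes explicit a step the paper leaves implicit, and the consecutiveness you worry about follows directly from the hypothesis $l_S(w_j)=l_S(w_{j-1})+1$ together with condition (ii) of Definition~\ref{facets}.
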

\begin{proof}  From subsection~\ref{rootsduals}, the root $\tau_{i}= s_ns_{n-1}\cdots s_{i+1}\alpha_i$ is positive and  
\[
	c = s_1\cdots \widehat{s_{i}}\cdots s_nR(\tau_{i})
	= R(\theta_1)\cdots R(\theta_{n-1})R(\tau_{i}).
\]
It follows (using Equation~\ref{E-coeffs2}) that  
there exist  unique, non-zero scalars, $a_1, \dots , a_n$, for which 
\[w_0^{-1}(v_0) = a_1\mu(c \theta_1)+ \dots +a_{n-1}\mu(c \theta_{n-1})+ a_n\mu(c\tau_i).\]
Our hypotheses, together with Proposition~\ref{lengthen},  imply that
 $a_i <0$ for $1\le i\le n-1$.
 If $a_n< 0$,  then Proposition~\ref{char} implies that  
\[
\langle w_0|w_1| \dots | w_{n-1}|w_{n-1}R(\tau_i)\rangle\] 
is a special $n$-simplex, as required.

For the case $a_n> 0$,  we note that $\tau_i$ is among the last $n$ positive roots by 
Example~\ref{firstnlastn}.  Thus Proposition~\ref{rollseveral} with $l=n-1$ can be applied to give the required special $n$-simplex.

For the uniqueness, first note that  
since $w_0(v_0), w_1(v_0), \dots , w_{n-1}(v_0)$ must be consecutive in any special $n$-simplex containing them, the only possible such $n$-simplices are of the form 
\[\langle w_0|w_1| \dots | w_{n-1}|w_{n-1}R(\tau_i)\rangle \quad \mbox{or} \quad \langle  w_0R(\epsilon_i)|w_0|w_1| \dots | w_{n-1}\rangle.\]
The first arises when the coefficient $a_n$ above is negative; the second arises if $a_n$ is positive since $\epsilon_i = -c\tau_i$.
Since $a_n \ne 0$, exactly one of these possibilities must occur.
\end{proof}  
The following technical result about linear combinations of positive roots is used in the proof of  Proposition \ref{extendboundarysimplex2}.
\begin{lemma}
\label{newlemma}
Assume that $1 \le q_1<q_2<\cdots < q_t \le k$, that $k+1 \le p_1<\cdots p_r\le n$
and that $d_1,\dots,d_t,b_1,\dots,b_r \in \reals$. Let $w$ be the orthogonal transformation $w=s_{p_1}\cdots s_{p_r}$ and let $v$ be the vector given by
\[ v= b_1\alpha_{p_1}+\cdots+b_r\alpha_{p_r}+d_1\bigl(-c^{-1}\alpha_{q_1}\bigr)
+ \cdots + d_t\bigl(-c^{-1}\alpha_{q_t}\bigr).\]
Then $-wc^{-1}(\alpha_{q_1}),\dots, -wc^{-1}(\alpha_{q_t})$ are positive roots and 
\[  v= d_1\bigl(-wc^{-1}\alpha_{q_1}\bigr)
+ \cdots + d_t\bigl(-wc^{-1}\alpha_{q_t}\bigr)+
b'_1\alpha_{p_1}+\cdots+b'_r\alpha_{p_r}\]
for some $b'_1,\dots,b'_r \in \reals$. In particular, the  same coefficients $d_1, \dots , d_t$ appear in both of the above expressions for $v$.
\end{lemma}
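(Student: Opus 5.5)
The plan is a direct computation. It rests on two facts: the reflections $s_{p_1},\dots,s_{p_r}$ come from the orthonormal set $\Pi_2$, and each root $-c^{-1}\alpha_{q}$ with $q\le k$ is one of the $\tau_q$ from Example~\ref{firstnlastn}.

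\emph{Step 1: an explicit formula for $w$.} Since $\alpha_{p_1},\dots,\alpha_{p_r}$ are pairwise orthogonal unit vectors, the reflections $s_{p_j}$ commute. Applying the reflection formula repeatedly gives
\[ w(x) = x - 2\sum_{j=1}^r (\alpha_{p_j}\cdot x)\,\alpha_{p_j} \quad\text{for all } x\in\reals^n. \]
So $w(x)-x$ lies in $\mathrm{Span}\{\alpha_{p_1},\dots,\alpha_{p_r}\}$ for every $x$.

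\emph{Step 2: rewrite $v$.} Put $x=-c^{-1}\alpha_{q_i}$ in Step 1. This gives $-c^{-1}\alpha_{q_i} = -wc^{-1}\alpha_{q_i} + \sum_j \gamma_{ij}\alpha_{p_j}$, where $\gamma_{ij} = -2\,\alpha_{p_j}\cdot(-c^{-1}\alpha_{q_i})$. Substitute this into the given expression for $v$ and collect the $\alpha_{p_j}$ terms into
\[ b'_j = b_j + \sum_i d_i\gamma_{ij}. \]
The result is the claimed expression, with the same coefficients $d_1,\dots,d_t$.

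\emph{Step 3: positivity.} By Example~\ref{firstnlastn}, $-c^{-1}\alpha_{q} = \tau_q$ for $q\le k$, where $\tau_q = s_n\cdots s_{q+1}\alpha_q$. Because $\Pi_1$ is orthonormal, the factors $s_{q+1},\dots,s_k$ fix $\alpha_q$, so $\tau_q = s_n\cdots s_{k+1}\alpha_q$. By equation~(\ref{multiplereflections}), this equals $\alpha_q$ plus a combination of $\alpha_{k+1},\dots,\alpha_n$.

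By Step 1, applying $w$ changes only the coefficients of the $\alpha_{p_j}$, and all $p_j\ge k+1$. Hence the coefficient of $\alpha_q$ in $-wc^{-1}\alpha_q = w(\tau_q)$ is still $1$. Now $w(\tau_q)$ is a root, and every root has simple-root coefficients that are either all nonnegative or all nonpositive. A coefficient equal to $1$ therefore forces $w(\tau_q)$ to be positive.

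The only step needing care is this positivity argument, specifically pinning down the $\alpha_q$-coefficient of $\tau_q$. That requires the orthogonality within $\Pi_1$ (so that $s_{q+1},\dots,s_k$ fix $\alpha_q$) and the identification of $-c^{-1}\alpha_q$ with $\tau_q$ from Example~\ref{firstnlastn}.
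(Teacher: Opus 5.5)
Your proof is correct. The rewriting of $v$ is the same as the paper's, except that you use the explicit formula $w(x)=x-2\sum_j(\alpha_{p_j}\cdot x)\alpha_{p_j}$ where the paper only invokes equation~(\ref{multiplereflections}).

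The positivity step takes a different route.

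\textbf{The paper's argument} is structural. Each $s_p$ with $p\ge k+1$ permutes $\Phi^+\setminus\{\alpha_p\}$ (Humphreys, Prop.~1.4) and fixes the other elements of $\Pi_2$. So $s_p$, and hence $w$, permutes $\Phi^+\setminus\Pi_2$. The paper then places $-c^{-1}\alpha_q$ in $\Phi^+\setminus\Pi_2$ by appealing to the description of the last $n$ positive roots: $-c^{-1}\alpha_q$ is among them but is not one of $\alpha_{k+1},\dots,\alpha_n$.

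\textbf{Your argument} tracks a single coordinate. You note that $-c^{-1}\alpha_q=\tau_q=s_n\cdots s_{k+1}\alpha_q$ has $\alpha_q$-coefficient $1$. Since $w$ changes only the $\Pi_2$-coordinates, that coefficient survives, and sign-coherence of roots forces $w(\tau_q)$ to be positive.

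\textbf{What each buys.} Your version is more self-contained. It needs neither the ordering of the last $n$ positive roots, which the paper only sketches via Humphreys, Prop.~3.18, nor the permutation lemma. Along the way it also shows directly that $-c^{-1}\alpha_q$ is positive and lies outside $\Pi_2$. The paper's version gives the stronger, reusable fact that $w$ permutes all of $\Phi^+\setminus\Pi_2$, independent of the particular form of the roots $-c^{-1}\alpha_q$.
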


\begin{proof} 
Fix $p\in \{ p_1,\dots,p_r\}$ and $q \in \{q_1,\dots,q_t\}$.   
Since $s_p$ permutes the set $\Phi^+\setminus\{\alpha_p\}$ (by Proposition~1.4 of \cite{Hum90})
and  fixes  $\alpha_{k+1},\dots,\alpha_{p-1},\alpha_{p+1},\dots,\alpha_n$ (by orthogonality
of $\Pi_2$),  $s_p$ must permute $\Phi^+\setminus \Pi_2$. It follows that $w$ permutes
$\Phi^+\setminus \Pi_2$. Since $-c^{-1}\alpha_q$ is one of the 
last $n$ positive roots but not one of the last $n-k$ positive roots, we have 
$-c^{-1}\alpha_q \in \Phi^+\setminus \Pi_2$ and, hence, $w(-c^{-1}\alpha_q)\in \Phi^+\setminus \Pi_2$ is a positive root.

For the second assertion, for $u \in \reals^n$, equation~(\ref{multiplereflections}) gives 
\[ wu = u+D_{1}\alpha_{p_1}+\cdots+
D_{r}\alpha_{p_r},\]
for some $D_{1}, \dots , D_{r} \in \reals$, so that 
\[u = wu -D_{1}\alpha_{p_1}-\cdots-
D_{r}\alpha_{p_r}.\]
Applying this to each $u \in \{-c^{-1}\alpha_{q_1},\dots,-c^{-1}\alpha_{q_r}\}$ in 
the original expression for $v$ and rearranging yields an expression
for $v$ of the form in the second assertion.
\end{proof}

\begin{proposition}\label{extendboundarysimplex2}
Suppose $c' =  s_1\cdots \widehat{s_{i_1}}\cdots \widehat{s_{i_r}}\cdots s_n$ is a standard parabolic Coxeter element obtained by deleting $r$ simple reflections from $c$ and we have a factorisation 
$c' = R(\theta_1)\cdots R(\theta_{n-r})$,
with each $\theta_i$ a positive root.  Suppose further that $w_0 \in W$ and 
that $l_S(w_j) = l_S(w_{j-1})+1$, where\\ $w_j := w_0R(\theta_1)\cdots R(\theta_j)$ for 
$1\le j \le n-r$.
Then  $w_0(v_0), w_1(v_0)$, \dots, $w_{n-r}(v_0)$ are the vertices of an 
$(n-r)$-dimensional face of a special $n$-simplex.   
\end{proposition}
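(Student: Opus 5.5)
The plan is to reduce the statement to Proposition~\ref{char} and Proposition~\ref{rollseveral}, as was done for Proposition~\ref{extendtopboundarysimplex}. First we complete the factorisation of $c'$ to a factorisation of $c$. Split the deleted indices into $q_1<\dots<q_t\le k$ (deleted from $\Pi_1$) and $k+1\le p_1<\dots<p_{r-t}$ (deleted from $\Pi_2$). Put $s_Q=s_{q_1}\cdots s_{q_t}$ and $w=s_{p_1}\cdots s_{p_{r-t}}$. Since $\Pi_1$ and $\Pi_2$ are orthonormal, the factors inside each block commute, so $c=s_Q\,c'\,w$ and hence
\[c = c'\,(c'^{-1}s_Qc')\,w = R(\theta_1)\cdots R(\theta_{n-r})R(\gamma_1)\cdots R(\gamma_t)R(\alpha_{p_1})\cdots R(\alpha_{p_{r-t}}).\]
Here $\gamma_j=-c'^{-1}\alpha_{q_j}$. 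From $c^{-1}=w\,c'^{-1}s_Q$ one gets $\gamma_j=-wc^{-1}\alpha_{q_j}$, and these roots are positive by Lemma~\ref{newlemma}.

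Next we expand $w_{n-r}^{-1}(v_0)$ in the dual basis $\{\mu(\cdot)\}$ attached to this factorisation of $c$, using equations~(\ref{E-coeffs}) and~(\ref{E-coeffs2}) and Proposition~\ref{basischange}. Equivalently, we expand $w_0^{-1}(v_0)$ in the basis $\mu(c\,\cdot)$ of these $n$ roots. By Proposition~\ref{lengthen} and the hypothesis $l_S(w_j)=l_S(w_{j-1})+1$, the first $n-r$ coefficients are negative. The remaining $r$ coefficients, $d_1,\dots,d_t$ on the $\gamma$'s and $b_1,\dots,b_{r-t}$ on the $\alpha_p$'s, are nonzero but have arbitrary signs.

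The aim is to reorder and modify the tail so that the factors with negative coefficient come immediately after $\theta_{n-r}$, extending the chain with strictly increasing length. The factors with positive coefficient should come last, and each of them should be among the last $n$ positive roots. Then Proposition~\ref{rollseveral} applies, with $l=n-r$ plus the number of negative coefficients, and it produces a special $n$-simplex containing $w_0(v_0),\dots,w_{n-r}(v_0)$ as consecutive vertices. The $\alpha_p$'s are pairwise orthogonal and are among the last $n$ positive roots, so they can be permuted freely. We therefore move those with $b<0$ forward and keep those with $b>0$ at the very end. By Proposition~\ref{rollrightleft}, the coefficients are unchanged by commuting orthogonal adjacent factors, because the relevant dual vectors simply get permuted.

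The main obstacle is the $\gamma$'s. The roots $-wc^{-1}\alpha_q$ are not in general among the last $n$ positive roots, but $-c^{-1}\alpha_q$ are. The plan here is to use Lemma~\ref{newlemma}. Moving every $R(\alpha_p)$ to the left of the $\gamma$-block conjugates the $\gamma$'s by $w$ (or by the relevant sub-product), turning them back into $-c^{-1}\alpha_{q_j}$. Lemma~\ref{newlemma} guarantees that the coefficients $d_1,\dots,d_t$ survive this change of basis, while the $b$'s may change to some $b'$. So I would first put the tail in the form $R(\alpha_{p_1})\cdots R(\alpha_{p_{r-t}})R(-c^{-1}\alpha_{q_1})\cdots R(-c^{-1}\alpha_{q_t})$, where the $d$'s are known and the $b'$'s are recomputed. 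Then, among the mutually orthogonal $-c^{-1}\alpha_q$, those with $d<0$ are placed first and those with $d>0$ last. The delicate point is the $\alpha_p$'s whose new coefficient $b'$ is positive but which now sit before some $\gamma$ with negative coefficient. For these I would move them back to the end, passing only over the $\gamma$'s with negative coefficient. This reintroduces conjugation only on those $\gamma$'s, which are never rolled around, so positivity is all that is needed for them and it is supplied by the first part of Lemma~\ref{newlemma}. Finally, I would check via Proposition~\ref{lengthen} that each newly appended reflection with negative coefficient increases $l_S$ by one, and then invoke Proposition~\ref{rollseveral}.
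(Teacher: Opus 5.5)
Your route is essentially the paper's. You complete $c'$ with the deleted $\Pi_2$-reflections and the conjugates $R(-c^{-1}\alpha_q)$, sort by sign, use Lemma~\ref{newlemma} to carry the positive-coefficient $\alpha_p$ past the negative-coefficient $-c^{-1}\alpha_q$, and finish with Proposition~\ref{rollseveral}. Two small points: your first factorisation is an unnecessary detour, since $c=s_Qc'w=c'\,w\,(c^{-1}s_Qc)$ gives the paper's starting factorisation directly; and $-wc^{-1}\alpha_{q_j}=c'^{-1}\alpha_{q_j}$, not $-c'^{-1}\alpha_{q_j}$.

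\textbf{The gap is in your last move.} You carry the $\alpha_p$ with $b'_p>0$ to the right past the roots $-c^{-1}\alpha_q$ with $d_q<0$. Lemma~\ref{newlemma} preserves the $d_q$, now attached to the positive roots $-c_2c^{-1}\alpha_q$, where $c_2$ is the product of the moved $s_p$. It does \emph{not} preserve the coefficients of the moved $\alpha_p$: these change a second time. A direct computation gives the new coefficient $b''_p=b'_p-2\sum_q d_q(\alpha_p\cdot\alpha_q)\le b'_p$, summing over the $q$ you pass, and this can be negative. For example, in type $A_2$ with $c'=e$ one has $c=s_2R(\alpha_1+\alpha_2)=R(\alpha_1)s_2$ and
$b\alpha_2+d(\alpha_1+\alpha_2)=d\alpha_1+(b+d)\alpha_2$; the case $(b,d)=(1,-2)$ occurs for a suitable $w_0$. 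Your remark that ``positivity is all that is needed'' only concerns the conjugated $\gamma$'s. If such an $\alpha_p$ stays in what you regard as the positive tail, the sign pattern required by Proposition~\ref{rollseveral} (all negative coefficients first, then all positive) fails. Rolling that root via Lemma~\ref{rollaround} would then produce a positive coefficient, so Proposition~\ref{char} yields no special simplex.

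\textbf{The fix is the paper's Fourth Factorisation.} The moved $\alpha_p$ still form a block of mutually commuting reflections, so you can re-sort them. This only permutes coefficients and needs no further conjugation. Place those with $b''_p<0$ immediately after the conjugated $\gamma$'s, and those with $b''_p>0$ at the end together with the $-c^{-1}\alpha_q$ having $d_q>0$. Every root in the final positive tail is then either in $\Pi_2$ or of the form $-c^{-1}\alpha_q$, hence among the last $n$ positive roots, and Proposition~\ref{rollseveral} applies.
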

\begin{proof}  Our proof starts with an initial factorisation of $c$ as a product of reflections.
We then make as many as three controlled modifications to this factorisation to obtain a factorisation of $c$ to which 
Proposition~\ref{rollseveral} may be applied. For the purpose of this proof, the exact order within
each of the commuting sets of reflections, 
$\{s_1,\dots,s_k\}$ and $\{s_{k+1}, \dots , s_n\}$, is unimportant and we will reorder at various points to 
improve readability.

\textbf{Initial Factorisation:}  We can assume, after reordering,
that there are integers $j$ and $l$ with $1\le j\le k$ and $k+1 \le l\le n$ with 
$\{s_{i_1}, \dots , s_{i_r}\} = \{s_1, \dots ,s_j, s_{l+1}, \dots s_n\}$ so that
\[c' = s_{j+1}\cdots s_ks_{k+1}\cdots  s_l.\] 
(If either of the sets $\{i_1, \dots , i_r\} \cap \{1,\dots ,k\}$ or $\{i_1, \dots , i_r\} \cap \{k+1,\dots ,n\}$ is empty, the proof simplifies.)
 We define $c_L = s_{1}\cdots s_{j}$ and $c_R = s_{l+1}\cdots s_{n}$, and note 
 that $c = c_Lc'c_R= c'c_Rc^{-1}(c_L)c$.    Thus 
 \[c = R(\theta_1)\cdots R(\theta_{n-r})R(\alpha_{l+1})\cdots R(\alpha_{n})R(-c^{-1}\alpha_{1})\cdots R(-c^{-1}\alpha_{j}),\]
where the roots $\theta_1, \dots, \theta_{n-r}, \alpha_{l+1}, \dots, \alpha_{n}, -c^{-1}\alpha_{1}, \dots ,-c^{-1}\alpha_{j}$ are all positive.  
By Equation~\ref{E-coeffs2}, there are  nonzero scalars $a_{1}, \dots , a_{n-r}$, $b_{l+1}, \dots , b_{n}$ and  $d_{1}, \dots , d_{j}$ with
\begin{eqnarray*}
(\mu c)^{-1}w_0^{-1}(v_0)  &=& a_{1}\theta_1+\dots +a_{n-r}\theta_{n-r}\\
&&+b_{l+1}\alpha_{l+1}+\dots+ b_{n}\alpha_{n} \\
&&+d_{1}(-c^{-1} \alpha_{1})+\dots +d_{j}(-c^{-1} \alpha_{j}).
\end{eqnarray*}
As $l_S(w_p) = l_S(w_{p-1})+1$, for $1\le p \le n-r$, Proposition~\ref{lengthen} implies that $a_i$ 
is negative for $1\le i \le n-r$.  

\textbf{Second Factorisation:} 
Since $\{s_1, \dots, s_j\}$ and $\{s_{l+1}, \dots, s_n\}$ are commuting sets, we can reorder so that 
there are integers 
$m\in \{l+1, \dots , n\}$ and $i\in \{1, \dots , j\}$ such that 
$b_{l+1}, \dots , b_{m},d_{1}, \dots , d_{i}$ are negative and 
$b_{m+1}, \dots , b_{n}$, $d_{i+1}, \dots , d_{j} $ are positive.  
(If $b_{l+1}, \dots , b_{n}$ or $d_{1}, \dots , d_{j}$ all have the same sign, the proof simplifies.)
  Thus $c = c'c_1c_2c_3c_4$, where 
 \[c_1 = R(\alpha_{l+1})\cdots R(\alpha_{m}), \quad c_2 = 
 R(\alpha_{m+1})\cdots R(\alpha_n),\]
 \[c_3 = R(-c^{-1}\alpha_{1})\cdots R(-c^{-1}\alpha_{i}), 
 \quad c_4 = R(-c^{-1}\alpha_{i+1})\cdots R(-c^{-1}\alpha_{j}),\]  
 and 
\begin{eqnarray*}
(\mu c)^{-1}w_0^{-1}(v_0)  &=& a_{1}\theta_1+\cdots +a_{n-r}\theta_{n-r}\\
&&+b_{l+1}\alpha_{l+1}+\cdots+ b_{m}\alpha_{m} \\
&&+b_{m+1}\alpha_{m+1}+\cdots+ b_{n}\alpha_{n} \\
&&+d_{1}(-c^{-1} \alpha_{1})+\cdots +d_{i}(-c^{-1} \alpha_{i})\\
&&+d_{i+1}(-c^{-1} \alpha_{i+1})+\cdots +d_{j}(-c^{-1} \alpha_{j}).
\end{eqnarray*}
\textbf{Third Factorisation:}  Applying Lemma \ref{newlemma} to the sum of the third and fourth rows
of this expression yields  $b'_{m+1},\dots,b'_{n} \in \reals$ such that
\begin{eqnarray*}
(\mu c)^{-1}w_0^{-1}(v_0)  &=& a_{1}\theta_1+\cdots +a_{n-r}\theta_{n-r}\\
&&+b_{l+1}\alpha_{l+1}+\cdots+ b_{m}\alpha_{m} \\
&&+d_{1}(-c_2c^{-1} \alpha_{1})+\cdots +d_{i}(-c_2c^{-1} \alpha_{i})\\
&&+b'_{m+1}\alpha_{m+1}+\cdots+ b'_{n}\alpha_{n} \\
&&+d_{i+1}(-c^{-1} \alpha_{i+1})+\cdots +d_{j}(-c^{-1} \alpha_{j}),
\end{eqnarray*}
with all of the roots, including $-c_2c^{-1} \alpha_{1},\dots,-c_2c^{-1} \alpha_{i}$,
positive. Note that the coefficients in the first, second and third rows are negative, the coefficients 
in the fifth row are positive but the signs of $b'_{m+1}, \dots , b'_n$ are unknown. 
Since $c_2c_3= (c_2c_3c_2^{-1})c_2$, we also have 
\[c= c'c_1R(-c_2c^{-1}\alpha_{1})\cdots R(-c_2c^{-1}\alpha_{i})c_2c_4.\]
 \vskip .2cm
\textbf{Fourth Factorisation:}  A final reordering of $\alpha_{m+1}, \dots , \alpha_n$ allows us to assume there exists
$t \in \{m+1, \dots , n\}$ with the property that $b'_{m+1}, \dots , b'_t$ are negative while 
$b'_{t+1}, \dots , b'_n$ are positive.   (If $b'_{m+1}, \dots , b'_n$ all have the same sign, the proof simplifies.) We now have 
\begin{eqnarray*}
(\mu c)^{-1}w_0^{-1}(v_0)  &=& a_{1}\theta_1+\dots +a_{n-r}\theta_{n-r}\\
&&+b_{l+1}\alpha_{l+1}+\dots+ b_{m}\alpha_{m} \\
&&+d_{1}(-c_2c^{-1} \alpha_{1})+\dots +d_{i}(-c_2c^{-1} \alpha_{i})\\
&&+b'_{m+1}\alpha_{m+1}+\dots+ b'_{t}\alpha_{t} \\
&&+b'_{t+1}\alpha_{t+1}+\dots+ b'_{n}\alpha_{n} \\
&&+d_{i+1}(-c^{-1} \alpha_{i+1})+\dots +d_{j}(-c^{-1} \alpha_{j}),
\end{eqnarray*}
where the coefficients in the first, second, third and fourth rows are negative while the coefficients 
in fifth and sixth rows are positive.  

\textbf{Final Step:}  Since the roots $\alpha_{t+1},\dots, \alpha_{n}$ and 
$-c^{-1} \alpha_{i+1}, \dots , -c^{-1} \alpha_{j}$ are among the last $n$ positive roots (by Section \ref{NCPsection}), 
we can apply Proposition~\ref{rollseveral} 
to produce a special $n$-simplex 
having $w_0(v_0), w_1(v_0), \dots , w_{n-r}(v_0)$ among its vertices.
\end{proof}
\begin{remark}\label{oneskeleton}
Applying Proposition~\ref{extendboundarysimplex2} in the cases $r = n$ and 
$r=n-1$ yields, for each vertex or edge of the permutahedron, a special $n$-simplex containing that vertex or edge.  
\end{remark}

\section{Rank 2 example}
\label{dihedral}
\begin{example}\label{dihedralex}
Consider the case where $W = C_2$, the symmetry group of the square.  Using the usual action on $\reals^2$, we fix a fundamental chamber 
\[C = \{(x,y)\mid x-y \ge 0, y \ge 0\},\]
bounded by the reflection lines $y = 0$ and $y = x$.  To avoid surds, we drop the unit length restriction and use the simple roots 
$\{\alpha_1 = (0,1),  \alpha_2 = (1,-1)\}$.  
The corresponding dual basis vectors are then $\{\beta_1 = (1,1),  \beta_2 = (1, 0)\}$, 
so that $v_0 = (2,1)$.  
The ordered sets $\{\rho_1, \dots , \rho_8\}$ and $\{\mu_1, \dots , \mu_8\}$ 
(as in section \ref{NCPsection}) are given by
\begin{center}
\begin{tabular}{|c|c|c|c|c|c|c|c|c|}
\hline
$i$&$1$&$2$&$3$&$4$&$5$&$6$&$7$&$8$\\
\hline
$\rho_i$&$(0,1)$&$(1,1)$&$(1,0)$&$(1,-1)$
&$(0,-1)$&$(-1,-1)$&$(-1,0)$&$(-1,1)$\\
\hline
$\mu_i$&$(1,1)$&$(1, 0)$&$(1,-1)$&$(0,-1)$
&$(-1,-1)$&$(-1,0)$&$(-1,1)$&$(0,1)$\\
\hline
\end{tabular}
\end{center} 
Putting $R_i = R(\rho_i)$ for $1\le i \le 4$, we have the four expressions for the Coxeter element 
$c= R_1R_4 = R_4R_3 =R_3R_2 =R_2R_1$.
\begin{figure}[htbp]
\label{fig:rank2}
\begin{center}
\begin{tikzpicture}
 \tikzstyle{edge}=[->,thick]
 \path[fill=green!30,opacity=0.5] (0,0) --  (90:3) arc(90:135:3) -- cycle;
\path[fill=red!30,opacity=0.5] (0,0) --  (135:3) arc(135:180:3) -- cycle;
\path[fill=blue!30,opacity=0.5] (0,0) --  (180:3) arc(180:225:3) -- cycle;
\path[fill=green!30,shift = {(7,-2)}] (0,0) --  (71.5:0.75) arc(71.5:90:0.75) -- cycle;
\path[fill=red!30,shift = {(4,1)}] (0,0) --  (-45:0.5) arc(-45:18.4:0.5) -- cycle;
\path[fill=blue!30,shift = {(4,-1)}] (0,0) --  (-18.4:0.5) arc(-18.4:90:0.5) -- cycle;
 \draw[fill=black] (2 ,1) circle (0.1) node [right]{$v_0$};
 \draw[fill=black] (2 ,-1) circle (0.1) node [right]{$s_1(v_0)$};
 \draw[fill=black] (1,-2 ) circle (0.1) node [right]{$s_1s_2(v_0)$};
 \draw[fill=black] (-1,-2 ) circle (0.1) node [left]{$s_1s_2s_1(v_0)$};
 \draw[fill=black] (1,2 ) circle (0.1) node [right]{$s_2(v_0)$};
 \draw[fill=black] (-1,2) circle (0.1) node [left]{$s_2s_1(v_0)$};
\draw[fill=black] (-2,1 ) circle (0.1) node [left]{$s_2s_1s_2(v_0)$};
 \draw[fill=black] (-2,-1 ) circle (0.1) node [left]{$s_2s_1s_2s_1(v_0)$};
\draw [->,red](0,0)--(1,0);
\node at (0.7,0) [below= -0.05 cm]{$\mu_2$};
\draw [-,blue](2,1)--(1,2);
\draw [-,red,dashed](1,1)--(2,1);
\draw [-,red,dashed](1,0)--(2,1);
\draw [-,blue](2,1)--(2,-1);
\draw [-,blue](2,-1)--(1,-2);
\draw [-,blue](1,-2)--(-1,-2);
\draw [-,blue](-1,-2)--(-2,-1);
\draw [-,blue](-2,-1)--(-2,1);
\draw [-,blue](-2,1)--(-1,2);
\draw [-,blue](-1,2)--(1,2);
\draw [->,red](0,0)--(1,1);
\node at (0.7,0.7) [below = 0.1 cm] {$\mu_1$};
\draw [->,red](0,0)--(0,1);
\draw [->,red](0,0)--(-1,1);
\draw [->,red](0,0)--(-1,0);
\draw [->,red](0,0)--(-1,-1);
\draw [->,red](0,0)--(0,-1);
\node at (0,-0.7) [left=-0.1] {$\mu_4$};
\draw [->,red](0,0)--(1,-1);
\node at (0.7,-0.7) [left] {$\mu_3$};
 \draw[fill=black] (8 ,1) circle (0.1)node [right]{$v_0$};
 \draw[fill=black] (8 ,-1) circle (0.1) ;
 \draw[fill=black] (7,-2 ) circle (0.1) ;
 \draw[fill=black] (5,-2 ) circle (0.1) ;
 \draw[fill=black] (7,2 ) circle (0.1) ;
 \draw[fill=black] (5,2) circle (0.1) ;
\draw[fill=black] (4,1 ) circle (0.1) ;
 \draw[fill=black] (4,-1 ) circle (0.1) ;
\draw [->,blue](8,1)--(7.5,1.5);
\draw [-,blue](7.5,1.5)--(7,2);
\draw [->,blue](8,1)--(8,0);
\draw [-,blue](8,0)--(8,-1);
\draw [->,blue](8,-1)--(7.5,-1.5);
\draw [-,blue](7.5,-1.5)--(7,-2);
\draw [->,blue](7,-2)--(6,-2);
\draw [-,blue](6,-2)--(5,-2);
\draw [->,blue](5,-2)--(4.5,-1.5);
\draw [-,blue](4.5,-1.5)--(4,-1);
\draw [->,blue](4,1)--(4,0);
\draw [-,blue](4,0)--(4,-1);
\draw [->,blue](5,2)--(4.5,1.5);
\draw [-,blue](4.5,1.5)--(4,1);
\draw [->,blue](7,2)--(6,2);
\draw [-,blue](6,2)--(5,2);
\draw [->,blue](8,1)--(7.5,-0.5);
\draw [-,blue](7.5,-0.5)--(7,-2);

\draw [->,blue](7,-2)--(5.5,-1.5);
\draw [-,blue](5.5,-1.5)--(4,-1);
\draw [->,blue](7,2)--(5.5,1.5);
\draw [-,blue](5.5,1.5)--(4,1);
\draw [->,blue](7,2)--(7,0);
\draw [-,blue](7,0)--(7,-2);
\draw [->,blue](7,-2)--(5.5,-0.5);
\draw [-,blue](5.5,-0.5)--(4,1);
\node at (8,0) [right = 0.1 cm] {$R_1$};
\node at (6,-2) [below = 0.1 cm] {$R_1$};
\node at (6,2) [above = 0.1 cm] {$R_1$};
\node at (7.7,-1.5) [below = 0.1 cm] {$R_4$};
\node at (4.3,-1.5) [below = 0.1 cm] {$R_4$};
\node at (4.3,1.5) [above = 0.1 cm] {$R_4$};
\node at (7.7,1.5) [above = 0.1 cm] {$R_4$};
\node at (4,0) [left = 0.1 cm] {$R_1$};
\node at (7,0) [left = 0.1 cm] {$R_3$};
\node at (5.5,-0.5) [above = 0.1 cm] {$R_2$};
\node at (5.5,1.5) [below = 0.1 cm] {$c$};
\node at (5.5,-1.5) [above = 0.1 cm] {$c$};
\node at (7.4,-0.5) [above = 0.2 cm] {$c$};

\end{tikzpicture}
\end{center}
\caption{The permutahedron (which is slightly irregular because of the lengths 
of the simple roots) is shown on the left while the triangulation is 
shown on the right.   In the figure on the right, edge labellings follow the Cayley graph conventions.  
Thus a vertex $w_1(v_0)$ is connected to a vertex $w_2(v_0)$ by an edge labelled $u$ 
if $w_2 = w_1u$.
}
\end{figure}

For the factorisation $c = R_4R_3 = R(\rho_4)R(\rho_3)$, we see the single permutahedron vertex $s_2s_1(v_0)$ in the green sector denoting the cone on $\{-\mu_4, -\mu_3\}$ in Figure~\ref{fig:rank2} (left).  Thus there is a single special $2$-simplex labelled $c = R_4R_3$ in the triangulation in Figure~\ref{fig:rank2} (right).  Its last vertex is at $(s_2s_1)^{-1}(v_0) = s_1s_2(v_0)$ which is also indicated with a green sector.  Similar reasoning applies to the factorisations $c = R_3R_2$ and 
$c = R_2R_1$ using the colours red and blue respectively.  The union of the three coloured sectors in Figure~\ref{fig:rank2} (left) is the cone on $\{-\mu_1, -\mu_4\}$ giving three special $2$-simplices labelled $c = R_1R_4$ in the triangulation with endpoints at the corresponding vertices.  

To illustrate Proposition~\ref{extendboundarysimplex2} we consider the case of the $0$-cell with coordinates $(-2,1) = w(v_0)$ for $w = s_2s_1s_2\,(=w^{-1})$.  Here $k = 0$ and $c' = e$ in the notation of Proposition~\ref{extendboundarysimplex2}.   
Thus in the proof of the proposition we would have $c_L = R_1$ and 
$c_R  = R_4$, giving the initial expression $c = c'c_Rc^{-1}c_Lc = R_4R_3$.  
\[w^{-1}(v_0) = (-2,1) = (3)\mu_6 + (-1)\mu_5 = (3)c\mu_4 + (-1)c\mu_3.\]
So $c_2 = R_4$ and $c_3 = R_3$ ($c_1=e$ and $c_4= e$ here)  which gives 
$c_2c_3c_2^{-1} = R_1$.  We compute 
\[w^{-1}(v_0) = (-2,1) = (-1)\mu_3 + (1)\mu_6 = (-1)c\mu_1 + (1)c\mu_4.\]
 We now apply Lemma~\ref{rollaround}, using $c\rho_4 = -\rho_2$,  to get 
\[(-1, 2) = R_2w^{-1}(v_0) = R_2(-2, 1) = (-1)\mu_4+(-1)\mu_3 = (-1)c\mu_2+(-1)c\mu_1.\]
Since the coefficients are negative, part (c) of Proposition~\ref{char} gives a 
special $2$-simplex labelled $c = R_2R_1$ on the vertices $(1, -2) = (R_2w^{-1})^{-1}(v_0)$, 
$(-2,1) = w(v_0)$ and $(-2,-1) = (R_1w^{-1})^{-1}(v_0)$.
\end{example}
\section{Facets of special $n$-simplices}
\label{panelschar}
In this section 
we  determine 
the incidences of facets of special $n$-simplices
with other   special $n$-simplices
and with the boundary of the permutahedron.  
Throughout the section, $F = \sns{w_0}{w_1}{w_n}$ is a fixed 
special $n$-simplex. Thus,  $w_0,\dots,w_n \in W$ and there exist positive unit roots
$\theta_1, \dots , \theta_n$ with
$w_j = w_0R(\theta_1)R(\theta_2)\cdots R(\theta_j)$ and 
$l_S(w_j) = l_S(w_0)+j$ for $ 1\le j \le n$,
and 
 $c = R(\theta_1)R(\theta_2)\cdots R(\theta_n)$.
\begin{definition}\label{panels}
For each $i = 0, 1,\dots,n$, the $i$th facet of $F$ is  obtained by removing the vertex $w_i(v_0)$
from $F$ and is denoted by  $F_i$.  
\end{definition}

\begin{proposition}
\label{supports}
\begin{itemize}
\item[(i)] The support of the facet $F_0$ is the hyperplane 
through $w_1(v_0)$ and orthogonal to  $ w_0\mu(-c \theta_1) $.
\item[(ii)]    The support of the facet $F_n$ is the hyperplane through  $w_0(v_0)$
and orthogonal to $ w_0\mu(-c \theta_n)$.
\item[(iii)]     The support of the facet $F_i$ is the hyperplane through 
$w_0(v_0)$ and orthogonal to $ [(\theta_i\cdot v_0)w_0\mu(-c \theta_{i+1})-(\theta_{i+1}\cdot v_0)w_0\mu(-c \theta_{i})] $, for $1\le i \le n-1$.
\end{itemize}
\end{proposition}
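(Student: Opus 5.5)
The plan is to apply Proposition~\ref{hyperplaneequations} with $u_j = w_j(v_0)$ for $0\le j\le n$. By the computation in the proof of Proposition~\ref{affineindependent}, the edge vectors are
\[v_j = u_j - u_{j-1} = -2(\theta_j\cdot v_0)\,w_0(\epsilon_j), \qquad 1\le j\le n.\]
So the whole task reduces to finding the dual basis $\{\hat v_1,\dots,\hat v_n\}$ and substituting it into parts (a), (b), (c) of Proposition~\ref{hyperplaneequations}.

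To find the dual basis I will use Proposition~\ref{dualbases}(c). It says that $\{-\mu(c\theta_1),\dots,-\mu(c\theta_n)\}$ is dual to $\{\epsilon_1,\dots,\epsilon_n\}$. Since $w_0$ is orthogonal, $\{w_0\mu(-c\theta_j)\}$ is dual to $\{w_0\epsilon_j\}$; here I use linearity of $\mu$ to write $-\mu(c\theta_j)=\mu(-c\theta_j)$. Rescaling gives
\[\hat v_j = \frac{-1}{2(\theta_j\cdot v_0)}\, w_0\mu(-c\theta_j),\]
and one checks directly that $v_i\cdot\hat v_j=\delta_{ij}$.

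Each part of the statement then follows by substitution.
\begin{itemize}
\item[(i)] Proposition~\ref{hyperplaneequations}(a) gives the hyperplane through $u_1=w_1(v_0)$ with normal $\hat v_1$. This normal is a nonzero multiple of $w_0\mu(-c\theta_1)$.
\item[(ii)] Part (b) gives the hyperplane through $u_0=w_0(v_0)$ with normal $\hat v_n$, which is proportional to $w_0\mu(-c\theta_n)$.
\item[(iii)] Part (c) gives the normal $\hat v_i-\hat v_{i+1}$. Multiplying by the nonzero scalar $2(\theta_i\cdot v_0)(\theta_{i+1}\cdot v_0)$ produces
\[-(\theta_{i+1}\cdot v_0)\,w_0\mu(-c\theta_i)+(\theta_i\cdot v_0)\,w_0\mu(-c\theta_{i+1}),\]
which is exactly the vector in the statement.
\end{itemize}
Nonzero rescaling does not change the hyperplane, and the scalars are nonzero because $\theta_j\cdot v_0>0$ for positive roots.

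I expect no real obstacle. The only points needing care are the sign and scaling bookkeeping in $\hat v_j$, and confirming that the indexing of $\epsilon_j$ in Proposition~\ref{dualbases} matches the edge $u_{j-1}u_j$. That matching holds because $w_{j-1}(\theta_j)=w_0(\epsilon_j)$.
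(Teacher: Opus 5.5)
Your proposal is correct and follows the paper's own proof. You compute the edge vectors $-2(\theta_j\cdot v_0)\,w_0(\epsilon_j)$, obtain the dual basis $w_0\mu(c\theta_j)/(2(\theta_j\cdot v_0))$ from Proposition~\ref{dualbases}(c), and substitute into Proposition~\ref{hyperplaneequations}; your sign and scaling bookkeeping checks out.
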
 
\begin{proof} 
Since $w_{i+1}(v_0)-w_{i}(v_0) =-2(\theta_{i+1}\cdot v_0)w_0(\epsilon_{i+1})$ for
$i=0,\dots,n-1$ (as in the proof of Proposition~\ref{affineindependent}), the dual basis
to
\[ \{w_1(v_0)-w_0(v_0), w_2(v_0)-w_1(v_0), \dots,w_n(v_0)-w_{n-1}(v_0)\} \]
is 
\[\left\{ \frac{w_0\mu(c\theta_1)}{2(\theta_{1}\cdot v_0)}, \dots , \frac{w_0\mu(c\theta_n)}{2(\theta_{n}\cdot v_0)} \right\}, \]
by  Proposition~\ref{dualbases}. 
The proof is completed by applying Proposition~\ref{hyperplaneequations}.
\end{proof}

\begin{proposition}\label{topface}
(i) If $\theta_1$ is not one of the first $n$ positive roots,
 then $F_0$ is  a facet of exactly one other special $n$-simplex, namely
\[\langle w_1|\cdots |w_n|w_nR(c^{-1}(\theta_1))\rangle, \] and the 
support of $F_0$ separates $w_0(v_0)$ 
from $w_nR(c^{-1}(\theta_1))(v_0)$.

\vskip 0.2cm
(ii)  If $\theta_1$ is one of the first $n$ positive roots then 
$F_0$ lies in a facet of $P(W)$.
\end{proposition}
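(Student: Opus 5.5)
The plan is to work with the coordinates of Proposition~\ref{char}. Write
\[w_0^{-1}(v_0)=a_1\mu(c\theta_1)+\dots+a_n\mu(c\theta_n),\]
where $a_i<0$ for all $i$ since $F$ is special. Any special $n$-simplex $G$ having $F_0$ as a facet must contain $w_1(v_0),\dots,w_n(v_0)$. Since $l_S$ strictly increases along a special simplex, and the vertices of a special simplex are consecutive in the chain, $G$ is either $F$ itself or
\[\langle w_1|\cdots|w_n|w_nR(\phi)\rangle\]
for some positive root $\phi$ with $c=R(\theta_2)\cdots R(\theta_n)R(\phi)$. Equation~(\ref{E-rollleft}) forces $\phi=c^{-1}\theta_1$, up to sign. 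So $G=\langle w_1|\cdots|w_n|w_nR(c^{-1}\theta_1)\rangle$, and this requires $c^{-1}\theta_1$ to be a positive root. I will use Example~\ref{firstnlastn}: $\theta_1$ is among the first $n$ positive roots exactly when $c^{-1}\theta_1$ is negative. Note $c^{-1}\theta_1\cdot v_0<0$ means the same as $[c^{-1}\theta]\cdot v_0<0$. Hence in case (i) $\theta':=c^{-1}\theta_1$ is positive, $c=R(\theta_2)\cdots R(\theta_n)R(\theta')$, and uniqueness follows.

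\textbf{Case (i), existence.} I will compute $w_1^{-1}(v_0)$ in the basis $\mu(c\theta_2),\dots,\mu(c\theta_n),\mu(c\theta')$ and verify condition (b) of Proposition~\ref{char}. Lemma~\ref{rollaround} is the mirror computation, so I will run it in reverse. By Proposition~\ref{basischange} with $j=1$ (after adjusting the base point),
\[w_1^{-1}(v_0)=R(\theta_1)w_0^{-1}(v_0).\]
By Proposition~\ref{rollrightleft}(b), $\theta_1$ is orthogonal to $\mu(c\theta_2),\dots,\mu(c\theta_n)$, so those terms are unchanged. Meanwhile
\[R(\theta_1)\mu(c\theta_1)=R(\theta_1)c\mu(\theta_1)=\mu(\theta_1),\]
using $R(\theta)\mu(\theta)=c\mu(\theta)$ and $R(\theta)^2=1$. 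Since $\mu(\theta_1)=\mu(c\theta')$, this gives
\[w_1^{-1}(v_0)=a_2\mu(c\theta_2)+\dots+a_n\mu(c\theta_n)+a_1\mu(c\theta'),\]
with all coefficients negative. Proposition~\ref{char} then shows $G$ is special.

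\textbf{Case (i), separation.} For the separation claim I will use Proposition~\ref{supports}(i). The support of $F_0$ is $\{x:\ w_0\mu(-c\theta_1)\cdot(x-w_1(v_0))=0\}$. I will evaluate this functional at $w_0(v_0)$ and at $w_nR(\theta')(v_0)$ and check that the signs are opposite.

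At $w_0(v_0)$, the difference $w_0(v_0)-w_1(v_0)=2(\theta_1\cdot v_0)w_0\theta_1$. Pairing with $w_0\mu(-c\theta_1)$ gives $-2(\theta_1\cdot v_0)\,\theta_1\cdot\mu(c\theta_1)$. This equals $2(\theta_1\cdot v_0)>0$, since $\theta_1\cdot c\mu(\theta_1)=-1$.

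At the other vertex, $w_nR(\theta')(v_0)-w_n(v_0)=-2(\theta'\cdot v_0)w_n\theta'$, and $w_n\theta'=w_0c\,c^{-1}\theta_1=w_0\theta_1$. Pairing gives $-2(\theta'\cdot v_0)$, which is negative because $\theta'$ is positive. The functional vanishes at $w_n(v_0)$, which lies in the support, so the two vertices are on opposite sides.

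\textbf{Case (ii).} If $\theta_1$ is among the first $n$ positive roots, it is $\alpha_q$ for some $q\le k$ or $-c\alpha_q$ for some $q>k$. I will show $F_0$ lies in a facet $w_n\cdot P_i$ of $P(W)$ by proving that $w_n^{-1}w_j\in W_i$ for all $j\ge1$ and a suitable $i$. We have
\[w_n^{-1}w_j=\bigl(R(\theta_{j+1})\cdots R(\theta_n)\bigr)^{-1},\]
and $R(\theta_2)\cdots R(\theta_n)=R(\theta_1)c=R(c^{-1}\theta_1)^{-1}\cdots$. More directly, $R(\theta_2)\cdots R(\theta_n)=R(\theta_1)c$.

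If $\theta_1=\alpha_q$ with $q\le k$, then $s_qc=s_1\cdots\widehat{s_q}\cdots s_n$ lies in $W_q$ (the $\Pi_1$ reflections commute). By \cite{Bessis}-type parabolic closure (prefixes in $\le_T$ of a parabolic Coxeter element lie in the parabolic), every partial product $R(\theta_{j+1})\cdots R(\theta_n)$ lies in $W_q$. Hence all of $w_1(v_0),\dots,w_n(v_0)$ lie in $w_nW_q(v_0)$, a facet by Proposition~\ref{permfacets}.

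If $\theta_1=-c\alpha_q$ with $q>k$, then $R(\theta_1)=cs_qc^{-1}$, so $R(\theta_1)c=cs_q=s_1\cdots\widehat{s_q}\cdots s_n$, again using commutation in $\Pi_2$, and the same argument applies. I expect the main obstacle to be this parabolic closure step. If it cannot be cited, I will instead use Lemma~\ref{negativity} and show $\beta_q\cdot w_n^{-1}w_j(v_0)=\beta_q\cdot v_0$, via the root-span description from equation~(\ref{multiplereflections}) and the fact that each $\theta_{j}$, $j\ge2$, lies in the span of the simple roots other than $\alpha_q$.
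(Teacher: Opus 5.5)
Your proof is correct. Part (i) follows the paper: the same reduction to the two candidate simplices and the same separation computation. For existence you check condition (b) of Proposition~\ref{char} at $w_1$, while the paper checks condition (c) at $w_nR(c^{-1}\theta_1)$; these are the same computation up to applying $c^{-1}$. (The fourth vertex is forced to be $w_1c(v_0)$ whatever the sign of $c^{-1}\theta_1$, so your phrase ``this requires $c^{-1}\theta_1$ to be positive'' is slightly off; positivity is only needed to invoke Proposition~\ref{char}.)

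Part (ii) is where you take a genuinely different route.

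The paper argues through the supporting hyperplane. Since $\theta_1=\rho_q$, one has $\mu(\theta_1)=\beta_q$. Hence the normal $w_0\mu(-c\theta_1)$ to the support of $F_0$ (Proposition~\ref{supports}) equals $-w_1(\beta_q)$. This is also the normal to the permutahedron facet with vertex set $w_1W_q(v_0)$. Both hyperplanes pass through $w_1(v_0)$, so they coincide.

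You argue through the vertices instead. From $R(\theta_2)\cdots R(\theta_n)=R(\theta_1)c=s_1\cdots\widehat{s_q}\cdots s_n$ you conclude that $w_1,\dots,w_n$ all lie in the coset $w_nW_q=w_1W_q$. The step you flag as the obstacle closes with the paper's own tools and the same key identity. Proposition~\ref{rollrightleft}(a) with $i=1$, together with $\mu(\theta_1)=\beta_q$, gives $\beta_q\perp\theta_2,\dots,\theta_n$. Then equation~(\ref{multiplereflections}) and the equality case of Lemma~\ref{negativity} put every $R(\theta_n)\cdots R(\theta_{j+1})$ in $W_q$. No parabolic-closure citation is needed, and your case split $q\le k$ versus $q>k$ becomes unnecessary.

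On what each route buys: the paper's is shorter and fits the hyperplane bookkeeping needed for the intersection cocircuit property. Yours names the boundary facet explicitly as a coset. It also shows that $F_0$ is the $w_1$-translate of a length-increasing chain for the parabolic Coxeter element $s_1\cdots\widehat{s_q}\cdots s_n$. That is exactly the converse of the situation in Proposition~\ref{extendtopboundarysimplex}, which drives the induction in Theorem~\ref{triangulation}.
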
 

\begin{proof} (i)  Since  $l_S(w_i) = l_S(w_1)+i-1$ for $2\le i \le n$, condition (ii) of Definition~\ref{facets} implies that the only possible special $n$-simplices
containing $F_0$ must have the form
\[F'=\sns{w_0'}{w_1}{w_n} \mbox{ \  for some } w_0' \in W, \]
or
\[F''=\langle w_1|\cdots |w_n|w_{n+1}\rangle  \mbox{ \  for some } w_{n+1}\in W.\]
In the first case, the requirement that $(w_0')^{-1}w_n=c$ implies that $w_0'=w_nc^{-1}=w_0$
and, hence, $F'=F$. 
In the second case, the requirement  that $c=w_1^{-1}w_{n+1}$ (for a special $n$-simplex)
gives
\begin{eqnarray*}
c&=&w_1^{-1}w_{n}(w_n^{-1}w_{n+1})\\
&=& R(\theta_2)\cdots R(\theta_n)(w_n^{-1}w_{n+1}).
\end{eqnarray*}
Since $c = R(\theta_2)\cdots R(\theta_n)R(c^{-1}\theta_1)$, it follows that
$w_n^{-1}w_{n+1}=R(c^{-1}\theta_1)$ and, hence, $F''$ has the form claimed in part (i).

We now prove that 
$F''= \langle w_1|\cdots |w_n|w_nR(c^{-1}(\theta_1)\rangle$ 
is a special $n$-simplex. 
Since $F$ is a special $n$-simplex, Proposition~\ref{char}  gives  
\[w_0^{-1}(v_0) = a_1\mu(c\theta_1) + \dots  + a_n\mu(c\theta_n) \ \mbox{with}\  a_i < 0 \ \mbox{for } 1\le i \le n\]
and, hence,  Proposition~\ref{basischange} gives 
\begin{eqnarray*}
R(c^{-1}\theta_1)w_n^{-1}(v_0)&=& c^{-1}w_1^{-1}(v_0)\\
&=& c^{-1}(a_1\mu(\theta_1)+ a_2\mu(c\theta_2) + \dots  + a_n\mu(c\theta_n))\\
&=& a_1\mu(c^{-1}\theta_1)+ a_2\mu(\theta_2) + \dots  + a_n\mu(\theta_n).
\end{eqnarray*}
As $c = R(\theta_2)\dots R(\theta_n)R(c^{-1}\theta_1)$ and $c^{-1}\theta_1$
is a positive root (by Example~\ref{firstnlastn}),
Proposition~\ref{char} implies that $F''$ is a special $n$-simplex. 

To establish the separation, we recall from Proposition~\ref{supports}~(i) that  $w_0\mu(-c\theta_1)$ is 
orthogonal to the support of $F_0$ and $w_0R(\theta_1)(v_0)=w_1(v_0) \in F_0$.  
Thus, using $\theta\cdot \mu(c\theta) = -1$,
\begin{eqnarray*}
[w_0(v_0)-w_1(v_0)]\cdot w_0\mu(-c\theta_1) &=& [w_0(v_0)-w_0R(\theta_1)(v_0)]\cdot w_0\mu(-c\theta_1) \\
&=& [v_0-R(\theta_1)(v_0)]\cdot \mu(-c\theta_1) \\
&=& 2(\theta_1\cdot v_0)\theta_1\cdot \mu(-c\theta_1) \\
&=& 2(\theta_1\cdot v_0)
 \end{eqnarray*}
which is positive, since $\theta_1$ is a positive root.  
Next,  since $w_n=w_0c$ and $w_n(v_0)-w_1(v_0)$ is orthogonal to $w_0\mu(-c\theta_1)$, we have
\begin{eqnarray*}
[w_{n+1}(v_0)-w_1(v_0)]\cdot w_0\mu(-c\theta_1) 
&=& 
[w_nR(c^{-1}\theta_1)(v_0)-w_1(v_0)]\cdot w_0\mu(-c\theta_1) \\
&=& [w_nR(c^{-1}\theta_1)(v_0)-w_n(v_0)]\cdot w_0\mu(-c\theta_1)\\
&=& [cR(c^{-1}\theta_1)(v_0)-c(v_0)]\cdot \mu(-c\theta_1)\\
&=& c\bigl(-2(c^{-1}\theta_1\cdot v_0)c^{-1}\theta_1\bigr)\cdot \mu(-c\theta_1)\\
&=& -2(c^{-1}\theta_1\cdot v_0)
\end{eqnarray*}
which is negative, since $c^{-1}\theta_1$ is a positive root.  
Thus $w_0(v_0)$ and $w_{n+1}(v_0)$ are on opposite sides of the support of $F_0$.  

(ii)  If  $\theta_1 = \rho_j$ for some $j \in \{1,\dots,n\}$, then $\mu(\theta_1) = \beta_j$  
and
\[w_1(\beta_j) =  w_0R(\theta_1)(\mu(\theta_1))
=w_0(c(\mu(\theta_1))) =
 w_0(\mu(c\theta_1))\]
since $R(\theta_1)(\mu(\theta_1))=c(\mu(\theta_1))$ and $c \circ \mu=\mu\circ c$.
Thus, $-w_1(\beta_j)$ is orthogonal to the support of $F_0$, by Proposition~\ref{supports}.
Since $\beta_j$ is orthogonal to the support of the permutahedron facet whose vertex set 
is $W_j(v_0)$, by Lemma~\ref{negativity}, it follows that 
$-w_1(\beta_j)$ is also orthogonal to the support of the permutahedron facet whose vertex set  is $w_1W_j(v_0)$.
\end{proof} 

\begin{proposition}\label{bottomface}
(i)  If $\theta_n$ is not one of the last $n$ positive roots,
 then $F_n$ is  a facet of exactly one other special $n$-simplex, namely 
\[\sns{w_0R(c(\theta_n))}{w_0}{w_{n-1}},\] 
and the support of $F_n$ separates $w_n(v_0)$ 
from $w_0R(c(\theta_n))(v_0)$.\\
(ii)  If $\theta_n$ is one of the last $n$ positive roots, then 
$F_n$ lies in a facet of $P(W)$.
\end{proposition}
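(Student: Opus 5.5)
This mirrors Proposition~\ref{topface}, with the roles of the first and last vertices interchanged, and I will follow the same three-stage pattern.

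\textbf{Stage 1: candidate simplices.} Since $l_S(w_i)=l_S(w_0)+i$ for $0\le i\le n-1$, condition (ii) of Definition~\ref{facets} leaves only two shapes for a special $n$-simplex containing $F_n$: either $\sns{w_0}{w_{n-1}}{w_n'}$ or $\langle w_{-1}|w_0|\cdots|w_{n-1}\rangle$. In the first case $w_n'=w_0c=w_n$, so the simplex is $F$ itself. In the second case, $w_{-1}^{-1}w_{n-1}=c$ together with the identity
\[c=R(c\theta_n)R(\theta_1)\cdots R(\theta_{n-1}),\]
which is equation~(\ref{E-rollright}) with $i=n$, forces $w_{-1}^{-1}w_0=R(c\theta_n)$. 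Hence $w_{-1}=w_0R(c\theta_n)$.

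\textbf{Stage 2: the candidate is special.} I would apply Proposition~\ref{char}(c) to the factorisation $c=R(c\theta_n)R(\theta_1)\cdots R(\theta_{n-1})$, whose last vertex is $w_{n-1}$. The first point is that $c\theta_n$ is positive: $\theta_n$ is not among the last $n$ positive roots, and by Example~\ref{firstnlastn} those are exactly the positive $\theta$ with $c\theta\cdot v_0<0$. Next, Proposition~\ref{basischange} with $j=n-1$ gives
\[w_{n-1}^{-1}(v_0)=a_1\mu(\theta_1)+\cdots+a_{n-1}\mu(\theta_{n-1})+a_n\mu(c\theta_n),\]
with every $a_i<0$ by Proposition~\ref{char}. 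These vectors are exactly $\mu$ applied to the roots of the new factorisation, so condition (c) holds and $\langle w_0R(c\theta_n)|w_0|\cdots|w_{n-1}\rangle$ is special.

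\textbf{Stage 3: separation.} By Proposition~\ref{supports}(ii), the normal to the support of $F_n$ is $w_0\mu(-c\theta_n)$, and $w_0(v_0)$ lies on the support.
\begin{itemize}
\item For $w_n(v_0)=w_{n-1}R(\theta_n)(v_0)$: the difference $w_{n-1}(v_0)-w_0(v_0)$ is orthogonal to the normal, so I reduce to $[w_{n-1}R(\theta_n)(v_0)-w_{n-1}(v_0)]\cdot w_0\mu(-c\theta_n)$. Using $w_0^{-1}w_{n-1}=cR(\theta_n)$ and $\theta\cdot\mu(c\theta)=-1$, this should come out as $\pm 2(\theta_n\cdot v_0)$.
\item For the other vertex: $[w_0R(c\theta_n)(v_0)-w_0(v_0)]\cdot w_0\mu(-c\theta_n)=-2(c\theta_n\cdot v_0)\,\bigl(c\theta_n\cdot\mu(-c\theta_n)\bigr)$. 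Since $\mu(c\theta)\cdot\theta=1$, this gives $-2(c\theta_n\cdot v_0)\cdot(-1)=2(c\theta_n\cdot v_0)>0$.
\end{itemize}
The two signs must be opposite. The main bookkeeping obstacle is tracking these signs carefully, in particular showing $R(\theta_n)c^{-1}\mu(c\theta_n)$ pairs correctly with the root. I would simplify via $R(\theta_n)\mu(\theta_n)=c\mu(\theta_n)$.

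\textbf{Part (ii).} If $\theta_n$ is among the last $n$ positive roots, Example~\ref{firstnlastn} gives $\theta_n=\tau_j$ for the simple factorisation, so $\theta_n=-c^{-1}\alpha_j$ or $\theta_n=\alpha_j$. In either case $\mu(c\theta_n)=\pm\mu(\alpha_j)$ or a $c$-power of it. By Proposition~\ref{dualbases}(b) with $\theta_i=\alpha_i$, the vectors $\mu(\alpha_j)$ are dual to the $\tau$'s. I would rather compute directly: I want $-\mu(c\theta_n)$ to be a positive multiple of $w$-translates of some $\beta_j$, exactly as in Proposition~\ref{topface}(ii) where $\mu(\rho_j)=\beta_j$.

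Concretely, the last $n$ positive roots are $\rho_{nh/2-n+1},\dots,\rho_{nh/2}$, and $c\rho_i=\rho_{i+n}$. So $c\theta_n=\rho_{m}$ with $nh/2<m\le nh/2+n$, giving $c\theta_n=-\rho_{j}$ for some $j\le n$ (since $\rho_{i+nh/2}=-\rho_i$). Then $\mu(-c\theta_n)=\mu(\rho_j)=\beta_j$, and the support normal of $F_n$ is $w_0(\beta_j)$. This is the normal of the permutahedron facet through $w_0(v_0)$ with vertex set $w_0W_j(v_0)$, by Lemma~\ref{negativity}. Both hyperplanes contain $w_0(v_0)$ and have the same normal, so they coincide, and $F_n\subset P(W)$ lies in that facet.
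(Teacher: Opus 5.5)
Your route is the paper's own. For (i) the paper only says the argument is analogous to Proposition~\ref{topface}(i), which is what your Stages 1--3 carry out. For (ii) the paper uses exactly your idea: $\mu(-c\theta_n)=\beta_j$, so the support of $F_n$ coincides with the support of the permutahedron facet with vertex set $w_0W_j(v_0)$. Stages 1 and 2 are correct and complete.

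\textbf{The gap in Stage 3.} You leave $[w_n(v_0)-w_0(v_0)]\cdot w_0\mu(-c\theta_n)$ as $\pm 2(\theta_n\cdot v_0)$ and then assert that ``the two signs must be opposite''. That is the separation claim itself, so the step is missing. The sign is easy to fix. As in the proof of Proposition~\ref{affineindependent}, $w_n(v_0)-w_{n-1}(v_0)=-2(\theta_n\cdot v_0)\,w_0(\epsilon_n)$. By Proposition~\ref{dualbases}(c), $\epsilon_n\cdot(-\mu(c\theta_n))=1$. Hence the value is $-2(\theta_n\cdot v_0)<0$. On your own route the same result follows from $R(\theta_n)c^{-1}\mu(-c\theta_n)=-R(\theta_n)\mu(\theta_n)$, whose product with $\theta_n$ is $\theta_n\cdot\mu(\theta_n)=1$. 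Together with your value $2(c\theta_n\cdot v_0)>0$ for $w_0R(c\theta_n)(v_0)$, this gives the separation. In that second computation, the identity you cite, $\mu(c\theta)\cdot\theta=1$, is wrong: it equals $-1$. What you actually use is $\mu(\phi)\cdot\phi=1$ with $\phi=c\theta_n$, and your final value is right.

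\textbf{The error in (ii).} The justification ``since $\rho_{i+nh/2}=-\rho_i$'' is false in general. The last $n$ positive roots are only known up to a permutation. The identity already fails in type $A_3$: with $\Pi_1$ the two end nodes one finds $\rho_7=-\rho_2$. What is true, and all you need, is a set statement: $c$ maps the set of the last $n$ positive roots onto the negatives of the first $n$. This follows at once from the explicit lists $-c^{-1}\alpha_1,\dots,-c^{-1}\alpha_k,\alpha_{k+1},\dots,\alpha_n$ and $\alpha_1,\dots,\alpha_k,-c\alpha_{k+1},\dots,-c\alpha_n$ in \S\ref{NCPsection}. You in fact had this in your first attempt. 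If $\theta_n=-c^{-1}\alpha_j$ or $\theta_n=\alpha_j$, then $-c\theta_n=\alpha_j$ or $-c\alpha_j$, so $-c\theta_n\in\{\rho_1,\dots,\rho_n\}$ and $\mu(-c\theta_n)=\beta_j$ for some $j$.

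With these two repairs your proof is complete and agrees with the paper's.
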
 

\begin{proof} 
(i)  This part is analogous to part (i) of Proposition~\ref{topface}.
\vskip .1cm
(ii)  If $\theta_n$ is one of the last $n$ positive roots, then $-c\theta_n$
is one of the first $n$ positive roots and, hence, 
   $\mu(-c\theta_n) = \beta_j$ for some $j\in \{1,\dots,n\}$.  
Since  $w_0(\beta_j) = w_0(\mu(-c\theta_n))$ is orthogonal to  the support of $F_n$, 
by Proposition~\ref{supports}, and to the support of the permutahedron facet 
whose vertex set is $w_0W_j(v_0)$ (as in the proof of part (ii) of Proposition~\ref{topface}), 
it follows that these supports coincide. Thus, $F_n$ lies in the 
permutahedron facet 
whose vertex set is $w_0W_j(v_0)$.
\end{proof}

\begin{proposition}\label{sideface}  If $i \in \{1,\dots,n-1\}$, then
$F_i$  is a facet of exactly one other special $n$-simplex, namely
\[\langle w_0|\cdots |w_{i-1}|w_{i-1}\bar{R}|w_{i+1}|\cdots |w_n\rangle,\]
 where $\bar{R}<_Tw_{i-1}^{-1}w_{i+1}$ is a reflection distinct from $R(\theta_i)$.  
Furthermore, the support of $F_i$ separates $w_i(v_0)$ 
from $w_{i-1}\bar{R}(v_0)$ and, hence, $F_i$ is not contained in a facet of $P(W)$.
\end{proposition}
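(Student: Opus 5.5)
Since $F_i$ keeps the vertices $w_0(v_0),\dots,w_{i-1}(v_0),w_{i+1}(v_0),\dots,w_n(v_0)$ and $l_S(w_{i+1})=l_S(w_{i-1})+2$, condition (ii) of Definition~\ref{facets} shows that any special $n$-simplex containing $F_i$ has the form $\langle w_0|\cdots|w_{i-1}|u|w_{i+1}|\cdots|w_n\rangle$. Here $w_{i-1}^{-1}u$ and $u^{-1}w_{i+1}$ are reflections whose product is $w_{i-1}^{-1}w_{i+1}=R(\theta_i)R(\theta_{i+1})$. Every rank-two element $x\le_T c$ lies in a rank-two parabolic subgroup, which is dihedral, and its reflection factorisations $x=R R'$ are parametrised by the choice of the first factor $R\le_T x$. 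The possible $u$ are therefore exactly $w_{i-1}\bar R$ with $\bar R<_T w_{i-1}^{-1}w_{i+1}$ a reflection, and $\bar R=R(\theta_i)$ gives $F$ itself. Chain condition (i) holds automatically because the product is unchanged. The remaining task is to show that among the other choices of $\bar R$ exactly one satisfies the length condition $l_S(w_{i-1})<l_S(u)<l_S(w_{i+1})$.

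For existence and uniqueness I will work in the dihedral coset $w_{i-1}W'$, where $W'=\langle R(\theta_i),R(\theta_{i+1})\rangle$ is a reflection subgroup of rank two. I will use the fact that the Bruhat order restricted to a coset of a reflection subgroup in this position behaves like the dihedral Bruhat order, via Dyer's theory of reflection subgroups. In $W'$ with its canonical simple reflections, $w_{i-1}$ is the minimal element of the relevant Bruhat interval and $w_{i+1}=w_{i-1}x$ sits two levels above it. A dihedral interval of length two has exactly two middle elements, $w_{i-1}R(\theta_i)$ and one other, $w_{i-1}\bar R$.

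Rather than rely on the heavy citation, I would prefer a coordinate argument using Proposition~\ref{char}. Write $w_0^{-1}(v_0)=\sum a_j\mu(c\theta_j)$ with all $a_j<0$. The Hurwitz move replaces $(\theta_i,\theta_{i+1})$ by $(\theta'_i,\theta'_{i+1})$ with $R(\theta'_i)R(\theta'_{i+1})=R(\theta_i)R(\theta_{i+1})$. By Proposition~\ref{rollrightleft}, the vectors $\mu(c\theta_j)$ for $j\ne i,i+1$ stay in the dual basis, and the two new coordinates $a'_i,a'_{i+1}$ are obtained by a linear change of basis in the two-dimensional span of $\mu(c\theta_i)$ and $\mu(c\theta_{i+1})$. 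Proposition~\ref{char} then reduces the question to which of the cones spanned by $-\mu(c\theta'_i),-\mu(c\theta'_{i+1})$ contain the projected vector. These cones should tile the cone on $-\mu(c\theta_i),-\mu(c\theta_{i+1})$ in the manner of the rank-two picture of Section~\ref{dihedral}. The point is then that a vector in the interior of one cone lies in exactly one adjacent cone across each interior wall. I expect this cone-tiling step, for a general (not necessarily bipartite) dihedral factorisation, to be the main obstacle. I would try to reduce it to the spherical complex $X(c)$ of \cite{Lattices} restricted to a rank-two parabolic, using Corollary~7.5 of \cite{Lattices}.

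For separation I will use Proposition~\ref{supports}(iii), which gives the normal $N=(\theta_i\cdot v_0)w_0\mu(-c\theta_{i+1})-(\theta_{i+1}\cdot v_0)w_0\mu(-c\theta_i)$. I will evaluate $N\cdot(w_i(v_0)-w_{i-1}(v_0))$ and $N\cdot(w_{i-1}\bar R(v_0)-w_{i-1}(v_0))$ using $w_i(v_0)-w_{i-1}(v_0)=-2(\theta_i\cdot v_0)w_0(\epsilon_i)$ and the duality of Proposition~\ref{dualbases}(c). Writing $w_0^{-1}w_{i-1}\bar R w_{i-1}^{-1}w_0=R(\epsilon')$ with $\epsilon'$ a positive combination of $\epsilon_i$ and $\epsilon_{i+1}$, the two values come out as $2(\theta_i\cdot v_0)(\theta_{i+1}\cdot v_0)$ times signed coefficients, and these have opposite signs. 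Finally, a hyperplane with vertices of $P(W)$ strictly on both sides cannot support a facet of $P(W)$, so $F_i$ is not contained in a facet of $P(W)$.
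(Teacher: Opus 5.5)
Your reduction to simplices of the form $\langle w_0|\cdots|w_{i-1}|w_{i-1}\bar R|w_{i+1}|\cdots|w_n\rangle$, and the passage via Proposition~\ref{char} to a sign question in the plane of $\theta_i,\theta_{i+1}$, are essentially what the paper does. However, the step that actually gives ``exactly one other'' is left open, and the picture you propose for it is wrong.

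\textbf{The counting step.} The cones of the other factorisations of $\sigma=R(\theta_i)R(\theta_{i+1})$ tile the cone of $F$ only in one special case. What you need is the explicit list of factorisations. Let $\eta_1,\dots,\eta_p$ be the positive roots of the dihedral group $W_\sigma$, in the order inherited from $W$. Then $\{\eta_1,\eta_p\}$ is a simple system, and the factorisations are exactly $R(\eta_1)R(\eta_p)$ and $R(\eta_m)R(\eta_{m-1})$ for $2\le m\le p$ (Theorem~5.4 of \cite{Lattices}). After pulling back by $\mu c$ and changing sign:
\begin{itemize}
\item one cone is all of $\mathrm{cone}(\eta_1,\eta_p)$, and the other $p-1$ subdivide it;
\item so the tiling you describe holds only when $F$ comes from $R(\eta_1)R(\eta_p)$; if $F$ comes from a consecutive pair, its cone is itself a tile.
\end{itemize}
You also need that $a_i\theta_i+a_{i+1}\theta_{i+1}$ lies on no ray $\reals_{>0}\eta_j$. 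This follows from equation~(\ref{E-coeffs2}) applied to every factorisation, since all coefficients are nonzero. Only then does the vector lie in exactly two cones: the big one and exactly one tile. Your sentence about ``exactly one adjacent cone across each interior wall'' does not give this count.

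The route you set aside in your second paragraph would close this gap without Dyer's theory. The admissible $u$ are exactly the elements strictly between $w_{i-1}$ and $w_{i+1}$ in Bruhat order, because Bruhat covers are right multiplications by reflections and $l_T(\sigma)=2$. Every Bruhat interval of length two has exactly four elements, so there are exactly two such $u$.

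\textbf{The separation step.} This has a real error. Since $\{-\mu(c\theta_j)\}$ is dual to $\{\epsilon_j\}$, you get $w_0(\epsilon_{i+1})\cdot N=\theta_i\cdot v_0$ and $w_0(\epsilon_i)\cdot N=-\theta_{i+1}\cdot v_0$. So if $\epsilon'=x\epsilon_i+y\epsilon_{i+1}$ with $x,y>0$, as you claim, then $w_0(\epsilon')\cdot N=y(\theta_i\cdot v_0)-x(\theta_{i+1}\cdot v_0)$, whose sign is undetermined.

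The paper avoids this as follows:
\begin{itemize}
\item By the symmetry of the two simplices, it assumes $F$ comes from $R(\eta_1)R(\eta_p)$, so $\theta_i=\eta_1$ and $\theta_{i+1}=\eta_p$.
\item If $\bar R=R(\eta_m)$, then since $\{\eta_1,\eta_p\}$ is simple and $R(\eta_1)$ permutes $\{\eta_2,\dots,\eta_p\}$, we have $R(\eta_1)\eta_m=b_1\eta_1+b_2\eta_p$ with $b_1\ge 0$ and $b_2>0$.
\item This gives $\epsilon'=-b_1\epsilon_i+b_2\epsilon_{i+1}$, with a non-positive coefficient on $\epsilon_i$. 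In particular, for $m\ge 3$ neither $\pm\epsilon'$ is a positive combination, so your claim is false there.
\item Then $[w_{i-1}\bar R(v_0)-w_{i-1}(v_0)]\cdot N=-2(\eta_m\cdot v_0)\bigl(b_1(\theta_{i+1}\cdot v_0)+b_2(\theta_i\cdot v_0)\bigr)<0$, while $[w_i(v_0)-w_{i-1}(v_0)]\cdot N=2(\theta_i\cdot v_0)(\theta_{i+1}\cdot v_0)>0$.
\end{itemize}

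Both gaps come from the same missing ingredient: the explicit structure of the reflections below $\sigma$, with $\{\eta_1,\eta_p\}$ simple. Your proposal never establishes this.
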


\begin{proof} Condition (i) of Defiition~\ref{facets} implies that any special $n$-simplex
containing $F_i$ must have the form
\[F'= \langle w_0| \cdots | w_{i-1}|w_{i}'|w_{i+1}|\cdots |w_n\rangle
\mbox{ \  for some } w_i' \in W.\]
The condition that 
\[e<_Tw_0^{-1}w_1 <_T\cdots<_Tw_0^{-1}w_{i-1}<_Tw_0^{-1}w_i'<_Tw_0^{-1}w_{i+1}<_T
\cdots <_Tw_0^{-1}w_n\] 
should be a maximal chain in the non-crossing partition lattice
then implies that $w_i'=w_{i-1}\bar{R}$ for some reflection $\bar{R}\in W$. Thus $F'$
has the claimed form. We now show that  exactly two 
special $n$-simplices of this form exist and contain $F_i$. 

Since $F$ is a special $n$-simplex,  there exist negative scalars $a_1,\dots,a_n$
such that 
$w_n^{-1}(v_0) = a_1\mu(\theta_1)+ \dots +  a_n\mu(\theta_n)$,
by Proposition~\ref{char}.
Let 
\begin{eqnarray*}
v_1 &=& a_1\mu(\theta_1)+ \dots +  a_{i-1}\mu(\theta_{i-1}) \qquad (v_1=0 \mbox{ if } i=1),\\
v_2 &=& a_i\mu(\theta_i)+ a_{i+1}\mu(\theta_{i+1}),\\
v_3 &=& a_{i+2}\mu(\theta_{i+2})+ \dots +  a_n\mu(\theta_n) \qquad (v_3=0 \mbox{ if } i=n-1).
\end{eqnarray*}
Denote  the non-crossing partition $R(\theta_i)R(\theta_{i+1})=w_{i-1}^{-1}w_{i+1}$
by $\sigma$ 
and denote by $W_\sigma$ the rank two, 
reflection subgroup which is generated by those reflections in $W$ which precede $\sigma$.
 Assume that  $\eta_1, \eta_2, \dots , \eta_p$ are the  distinct positive roots 
 corresponding to the reflections in $W_\sigma$ and that they are listed in the
 order inherited from the total order on the roots of $W$.    By Theorem 5.4 of \cite{Lattices}, 
$\{\eta_1, \eta_p\}$ is a simple system for $W_\sigma$ and
\[\sigma = R(\eta_1)R(\eta_p) = R(\eta_p)R(\eta_{p-1}) = \dots  
= R(\eta_3)R(\eta_2) = R(\eta_2)R(\eta_1)\]
is a complete list of the minimal factorisations of $\sigma$. For each factorisation,
$\sigma= R(\eta_j)R(\eta_{\bar{\jmath}} )$,  we have  
\( c= R(\theta_1)\cdots R(\theta_{i-1})R(\eta_j)R(\eta_{\bar{\jmath}} )R(\theta_{i+1})\cdots R(\theta_n).\)
It follows that, in the expression $w_n^{-1}(v_0)=v_1+v_2+v_3$, the vector
$v_2$ can be expressed as a linear combination of $\mu(\eta_j)$ and $\mu(\eta_{\bar{\jmath}})$
and, by equation~(\ref{E-coeffs2}), the coefficients of 
$\mu(\eta_j)$ and $\mu(\eta_{\bar{\jmath}})$ are both nonzero.
Hence, $\mu^{-1}(v_2)$ cannot be parallel to $\eta_j$
for any $j \in \{1,\dots,p\}$. Moreover, Proposition~\ref{char}
implies that 
\[\langle w_0| w_1(v_0)| \cdots | w_{i-1}| w_{i-1}R(\eta_j)|w_{i+1}|\cdots |w_n\rangle\] 
is a special $n$-simplex  if and only if the coefficients of 
$\mu(\eta_j)$ and $\mu(\eta_{\bar{\jmath}})$, in this expression for $v_2$,
are both negative. We show that this occurs for precisely two factorisations of $\sigma$.

Since $a_i,a_{i+1}<0$ and $\{\theta_i, \theta_{i+1}\}\subseteq \{\eta_1, \dots , \eta_{p}\} $,
the vector $\mu^{-1}(v_2)=a_i\theta_i+a_{i+1}\theta_{i+1}$
can be expressed as  a linear combination of $\eta_1$ and $\eta_{p}$ with non-positive coefficients.  In fact, these coefficients must be negative since $\mu^{-1}(v_2)$  is not parallel to any $\eta_j$.   Hence,
 the factorisation $\sigma= R(\eta_1)R(\eta_{p})$
 gives rise to one special $n$-simplex.
Next, since $\eta_2,\dots,\eta_p$ are obtained in this order by
successive rotations of $\eta_1$ through $\pi/p$ in the plane
spanned by $\theta_i$ and $\theta_{i+1}$, it follows 
that there exist unique scalars $a,b<0$
and a unique integer $m \in \{2,\dots,p\}$ 
such that $\mu^{-1}(v_2) =a\eta_m+b\eta_{m-1}$. 
Hence,
 the factorisation $\sigma= R(\eta_m)R(\eta_{m-1})$
 gives rise to another special $n$-simplex.

To establish the separation property, it will be convenient to assume that
$F$   arises from
the factorisation $\sigma=R(\eta_1)R(\eta_p)$ so that
$\theta_i = \eta_1$ and $\theta_{i+1} = \eta_p$.
Letting \(N_i = (\theta_i\cdot v_0)w_0\mu(-c\theta_{i+1})-(\theta_{i+1}\cdot v_0)w_0\mu(-c\theta_i),\)
it follows from Proposition \ref{supports}(iii) that $N_i$
 is orthogonal to the support of $F_i$ and, by Proposition~\ref{dualbases}~(c),
\begin{equation}
\label{dot-prod}
w_0(\epsilon_j)\cdot N_i = \left\{
\begin{array}{cl}
\theta_i\cdot v_0  &   \mbox{if}\ j = i+1,   \\
-\theta_{i+1}\cdot v_0  &    \mbox{if}\ j = i,     \\
 0 &        \mbox{otherwise.}
\end{array}
\right.
\end{equation}
Since $w_{i-1}(v_0),w_0(v_0)\in F_i$, the vector $w_{i-1}(v_0)-w_0(v_0)$ is orthogonal to $N_i$
and, hence,
\begin{eqnarray*}
[w_{i}(v_0)-w_0(v_0)]\cdot N_i
&=&[w_{i}(v_0)-w_{i-1}(v_0)]\cdot N_i\\
&=& w_{i-1}[-2(\theta_i\cdot v_0)\theta_i]\cdot N_i\\
&=& [-2(\theta_i\cdot v_0)w_{0}(\epsilon_{i})]\cdot  N_i\\
&=& 2(\theta_i\cdot v_0)(\theta_{i+1}\cdot v_0) \mbox{ \ by }(\ref{dot-prod}). 
\end{eqnarray*}
The last expression is positive since $\theta_i$ and $\theta_{i+1}$ are positive roots.  
Next, the $i^{\rm th}$ vertex of  the other special $n$-simplex containing $F_i$
is $w'_i(v_0) = w_{i-1}R(\eta_m)(v_0)$, 
with $m$ as above.      As $R(\eta_1)$ permutes the set $\{\eta_2,\dots,\eta_p\}$,
there exist $b_1 \ge 0$ and $b_2>0$ such that
 $R(\eta_1)\eta_m  = b_1\eta_{1}+b_2\eta_p$.
Since $w_{i-1}(\eta_1) = w_0(\epsilon_i)$, $w_i(\eta_p)= w_0(\epsilon_{i+1})$,
and $w_{i-1}(v_0)-w_{0}(v_0)$ is orthogonal to $N_i$, we have
\begin{eqnarray*}
[w'_i(v_0)) -w_0(v_0)]\cdot N_i &=& [w_{i-1}R(\eta_m)(v_0)-w_0(v_0)]\cdot N_i\\
&= &[w_{i-1}R(\eta_m)(v_0)-w_{i-1}(v_0)]\cdot N_i\\
&=& [-2(\eta_m\cdot v_0)w_{i-1}(\eta_m)]\cdot N_i\\
&=& -2(\eta_m\cdot v_0)[w_{i}R(\eta_1)(\eta_{m})\cdot  N_i]\\
&=& -2(\eta_m\cdot v_0)[w_{i}(b_1\eta_{1}+b_2\eta_p)\cdot  N_i]\\
&=& -2(\eta_m\cdot v_0)[(-b_1w_{i-1}(\eta_{1})+b_2w_i(\eta_p))\cdot  N_i]\\
&=& -2(\eta_m\cdot v_0)[(-b_1w_0(\epsilon_{i})+b_2w_0(\epsilon_{i+1}))\cdot  N_i]\\
&=& -2(\eta_m\cdot v_0)(b_1(\theta_{i+1}\cdot v_0)+b_2(\theta_i\cdot v_0)),
\end{eqnarray*}
by equation~(\ref{dot-prod}). This last expression is negative
since $\eta_m$, $\theta_{i}$ and $\theta_{i+1}$ are positive roots while $b_1 \ge 0$ and $b_2>0$.
Thus $w_{i}(v_0)$ and $w'_{i}(v_0)$ 
lie on opposite sides of the supporting hyperplane of $F_i$. 
\end{proof} 
\begin{example}
\label{example-again}
    In the triangulation of Example~\ref{dihedral}, we consider the special 
    $2$-simplex given by 
    $\langle s_1s_2|s_2s_1s_2|s_1s_2s_1s_2\rangle$ and its three faces (See Figure \ref{fig-again}).  Here \[w_0^{-1} = R(\rho_2)w_1^{-1} = R(\rho_2)R(\rho_1)w_2^{-1}\] so that $\theta_1 = \rho_2$ and 
    $\theta_2 = \rho_1$.  As in Proposition~\ref{topface} (ii), $\theta_1 = \rho_2$ is one of the first two positive roots so that the face $F_0$ is on the boundary.  On the other hand, $\theta_2 = \rho_1$ is not among the last two positive roots.  Thus, as in Proposition~\ref{bottomface} (i), the face $F_2$ is shared with the 
    special $2$-simplex 
    $\langle s_2|s_1s_2|s_2s_1s_2\rangle$ since 
\[s_1s_2R(c\rho_1) = s_1s_2R(\rho_3) = s_1s_2(s_1s_2s_1s_2s_1) = s_2.\]
Finally, as  in  Proposition~\ref{sideface}, the face $F_1$ is shared with the special $2$-simplex 
$\langle s_1s_2|s_1s_2s_1|s_1s_2s_1s_2\rangle$.  Here, 
in the terminology of the proof of Proposition~\ref{sideface}, 
the length $2$ non-crossing partition $\sigma$ is given by $\sigma = c$ and the vector $v_2$  is given by 
$v_2 = w_2^{-1}(v_0)$, since $i = 1 = n-i$.  
The vector $v_2$ lies in the cone on $\{-\mu_1, -\mu_2\}$ which gives the initial special $2$-simplex with labelling $R_2R_1$.  Since the cone on $\{-\mu_1, -\mu_2\}$ lies in 
the cone on $\{-\mu_1, -\mu_4\}$ the face $F_1$ also belongs to the special $2$-simplex with labelling $R_1R_4$.
\end{example}

\begin{figure}[htbp]
\begin{center}
\begin{tikzpicture}%
 \tikzstyle{edge}=[->,thick]
\draw [->,blue](8,1)--(7.5,1.5);
\draw [-,blue](7.5,1.5)--(7,2);
\draw [->,blue](8,1)--(8,0);
\draw [-,blue](8,0)--(8,-1);
\draw [->,blue](8,-1)--(7.5,-1.5);
\draw [-,blue](7.5,-1.5)--(7,-2);
\draw [->,blue](7,-2)--(6,-2);
\draw [-,blue](6,-2)--(5,-2);
\draw [->,blue](5,-2)--(4.5,-1.5);
\draw [-,blue](4.5,-1.5)--(4,-1);
\draw [->,blue](4,1)--(4,0);
\draw [-,blue](4,0)--(4,-1);
\draw [->,blue](5,2)--(4.5,1.5);
\draw [-,blue](4.5,1.5)--(4,1);
\draw [->,blue](7,2)--(6,2);
\draw [-,blue](6,2)--(5,2);
\draw [->,blue](8,1)--(7.5,-0.5);
\draw [-,blue](7.5,-0.5)--(7,-2);

\draw [->,blue](7,-2)--(5.5,-1.5);
\draw [-,blue](5.5,-1.5)--(4,-1);
\draw [->,blue](7,2)--(5.5,1.5);
\draw [-,blue](5.5,1.5)--(4,1);
\draw [->,blue](7,2)--(7,0);
\draw [-,blue](7,0)--(7,-2);
\draw [->,blue](7,-2)--(5.5,-0.5);
\draw [-,blue](5.5,-0.5)--(4,1);

\node at (6,-2) [below = 0.1 cm] {$R_1$};

\node at (4.3,-1.5) [below = 0.1 cm] {$R_4$};

\node at (4,0) [left = 0.1 cm] {$R_1$};
\node at (7,0) [left = 0.1 cm] {$R_3$};
\node at (5.5,-0.5) [above = 0.1 cm] {$R_2$};
\node at (5.5,1.5) [below = 0.05 cm] {$c$};
\node at (5.5,-1.5) [below = 0.05 cm] {$c$};

\draw[pattern=north east lines,pattern color=gray!50] (7,-2) -- (4,1) -- (4,-1) -- (7,-2);
 \draw[fill=black] (8 ,1) circle (0.1);
 \draw[fill=black] (8 ,-1) circle (0.1) ;
 \draw[fill=black] (7,-2 ) circle (0.1) node [below=.1]{$s_1s_2$};
 \draw[fill=black] (5,-2 ) circle (0.1)  node [below=.1]{$s_1s_2s_1$};
 \draw[fill=black] (7,2 ) circle (0.1)  node [above=.1]{$s_2$};
 \draw[fill=black] (5,2) circle (0.1) ;
\draw[fill=black] (4,1 ) circle (0.1)  node [left]{$s_2s_1s_2$};
 \draw[fill=black] (4,-1 ) circle (0.1)  node [left]{$s_1s_2s_1s_2$};
\end{tikzpicture}
\end{center}
\caption{The special $2$-simplex of Example \ref{example-again}.}
\label{fig-again}
\end{figure}

\section{Triangulation criteria}
\label{triangcrit}

In this section we prove that the set $\mathcal{T}$
consisting of all  special $n$-simplices and their faces
is a triangulation of the permutahedron, $P(W)$,
and we prove that this triangulation induces a homotopy equivalence
from $\qsal(W)$ to $\qncp(W)$. 
The proof of the former result  consists of checking that $\mathcal{T}$ satisfies two conditions (from \cite{DRS}) which, in our context, are defined as follows.

\begin{definition}
\label{ICP}
The set $\mathcal{T}$ has the \textbf{intersection cocircuit property}
if and only if (i) whenever $F$ is an  $(n-1)$-simplex in $\mathcal{T}$ 
which is not contained
in a facet of $P(W)$, then there are exactly two $n$-simplices in $\mathcal{T}$
which contain $F$ and (ii) the two additional vertices from these $n$-simplices
lie on opposite sides of the supporting hyperplane of $F$.
\end{definition}

\begin{definition}
\label{GFP}
The set $\mathcal{T}$ has the \textbf{general facet property}
if and only if (i) $P(W)$  has a facet $Q$ which is triangulated by those
simplices of $\mathcal{T}$ which are contained in $Q$ and (ii) no $(n-1)$-simplex 
of this triangulation is a facet
of two distinct $n$-simplices of $\mathcal{T}$.
\end{definition}
\begin{remark}\label{TriConditions} 
If the set $\mathcal{T}$ satisfies the intersection cocircuit property and the general facet property, then it is a triangulation of $P(W)$, by 
Corollaries 4.5.19 and 4.5.20 of \cite{DRS}.
\end{remark}
\begin{theorem} \label{triangulation}
The set $\mathcal{T}$,  consisting of all special $n$-simplices and their faces,  is
a triangulation of $P(W)$. 
\end{theorem}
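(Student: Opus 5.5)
The proof will verify the two criteria of Remark~\ref{TriConditions}, the intersection cocircuit property and the general facet property, for the set $\mathcal{T}$. Each element of $\mathcal{T}$ is a simplex with vertices among the vertices $w(v_0)$ of $P(W)$, by Proposition~\ref{affineindependent}. By Definition~\ref{facets}, every $(n-1)$-simplex of $\mathcal{T}$ that lies in a special $n$-simplex is one of the facets $F_0,\dots,F_n$ of some special $n$-simplex $F=\sns{w_0}{w_1}{w_n}$.

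\textbf{Intersection cocircuit property.} Let $G$ be an $(n-1)$-simplex of $\mathcal{T}$ not contained in a facet of $P(W)$, and write $G=F_i$ for some special $n$-simplex $F$. We argue by cases on $i$.
\begin{itemize}
\item If $i\in\{1,\dots,n-1\}$, Proposition~\ref{sideface} gives exactly one other special $n$-simplex containing $F_i$, and its extra vertex lies on the opposite side of the support from $w_i(v_0)$.
\item If $i=0$, Proposition~\ref{topface}(ii) shows that $\theta_1$ cannot be among the first $n$ positive roots, since otherwise $F_0$ would lie in a facet of $P(W)$. So Proposition~\ref{topface}(i) applies and gives exactly one other special $n$-simplex, together with the separation.
\item If $i=n$, the same reasoning with Proposition~\ref{bottomface} applies.
\end{itemize}
In each case there are exactly two special $n$-simplices containing $G$, namely $F$ and the one produced. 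Indeed, the propositions above assert ``exactly one other''. Their proofs characterise all special $n$-simplices containing the relevant facet, so any special $n$-simplex containing $G$ is among these two. The two extra vertices lie on opposite sides of the support of $G$, which is what Definition~\ref{ICP} requires.

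\textbf{General facet property.} I will choose a facet $Q$ of $P(W)$ and show that the simplices of $\mathcal{T}$ lying in $Q$ triangulate it. The natural choice is $Q=P_i$, the convex hull of $W_i(v_0)$. This is the permutahedron of the standard parabolic subgroup $W_i$, translated, with Coxeter element $c'=s_1\cdots\widehat{s_i}\cdots s_n$. The plan is to proceed by induction on rank, treating the statement of Theorem~\ref{triangulation} for all finite reflection groups, including reducible ones, with bipartite Coxeter elements.

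The restriction $c'$ of the bipartite $c$ is again bipartite. For reducible groups I expect the special simplices to be joins of special simplices of the components, suitably interleaved, so the statement should extend, although this step needs care. By the discussion at the start of Section~\ref{permfacechar} and Proposition~\ref{extendboundarysimplex2}, the $(n-1)$-simplices of $\mathcal{T}$ in $Q$ are exactly those with vertices $w_0(v_0),\dots,w_{n-1}(v_0)$ satisfying conditions (i)--(iv) relative to $W_i$. These are precisely the special $(n-1)$-simplices of the $W_i$-permutahedron for $c'$, and the induction hypothesis makes them a triangulation of $Q$. Condition (ii) of Definition~\ref{GFP} is immediate from the uniqueness statement in Proposition~\ref{extendtopboundarysimplex}: each such $(n-1)$-simplex lies in a unique special $n$-simplex.

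\textbf{Main obstacle.} The hardest step is the identification of the simplices of $\mathcal{T}$ inside $Q$ with the special simplices of the lower-rank group. This needs a careful check that faces of special $n$-simplices lying in $Q$ are exactly the (i)--(iv) configurations of full dimension $n-1$. It also needs the induction to be formulated for reducible parabolics, where irreducibility was assumed throughout the paper. If that formulation proves awkward, the fallback is the rank $2$ base case, which is checked by hand as in Example~\ref{dihedralex}, followed by a direct application of the same cocircuit argument within $Q$: Propositions~\ref{topface}, \ref{bottomface} and \ref{sideface} restricted to $W_i$.
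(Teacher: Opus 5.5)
\paragraph{Verdict.} There is a genuine gap, in the general facet property, and it comes from the choice of the facet $Q=P_i$.

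\paragraph{What matches the paper.} Your verification of the intersection cocircuit property is the paper's argument. The skeleton of your general facet argument is also the paper's. You induct on rank and take $Q=P_i$. You transport the inductive triangulation of the $W_i$-permutahedron (on $\beta_i^\perp$, translated by a multiple of $\beta_i$) onto $Q$. Condition (ii) of Definition~\ref{GFP} then comes from the uniqueness in Proposition~\ref{extendtopboundarysimplex}.

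\paragraph{The gap: the choice of $i$.} You leave $i$ arbitrary, so $W_i$ may be reducible (for example, delete an interior node of an $A_n$ diagram). You are then forced to run the induction over reducible groups, a case you only conjecture. Nothing in the paper covers it. Irreducibility is a standing hypothesis, and the tools you would need are set up for irreducible $W$: Steinberg's enumeration $\rho_1,\dots,\rho_{nh}$ of the roots, the description of the first and last $n$ positive roots, and the results of \cite{Lattices} behind Proposition~\ref{sideface}.

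\paragraph{The missing observation.} The Coxeter diagram of an irreducible finite $W$ is a tree, so you may take $i$ to be a leaf. Then:
\begin{itemize}
\item $W_i$ is irreducible;
\item $c'=s_1\cdots\widehat{s_i}\cdots s_n$ is still bipartite, with respect to $\Pi_1\setminus\{\alpha_i\}$ and $\Pi_2\setminus\{\alpha_i\}$;
\item the induction hypothesis applies verbatim.
\end{itemize}
Since Definition~\ref{GFP} asks for only one facet, no other facet ever needs to be considered.

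\paragraph{Why the fallback does not help.} Running Propositions~\ref{topface}, \ref{bottomface} and \ref{sideface} inside $Q$ yields only the cocircuit property for $Q$. To conclude via \cite{DRS} that $Q$ is triangulated, you still need a general facet property for $Q$. That is the same induction one dimension down, and those propositions again presuppose that $W_i$ is irreducible.

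\paragraph{On the reducible case.} Your intuition there is sound. For $W=W'\times W''$, the special simplices are exactly the shuffles of special simplices of the two factors, that is, the staircase triangulations of the products $\sigma'\times\sigma''$. These agree on common faces because the vertex orders are induced by $l_S$. So your route could be completed, but it needs that separate argument, which choosing $i$ to be a leaf makes unnecessary.
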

\begin{proof}   Propositions~\ref{topface}, \ref{bottomface} and \ref{sideface} 
show that $\mathcal{T}$ has the intersection cocircuit property. 
We prove that $\mathcal{T}$ satisfies the general facet property by induction on the rank of $W$, the rank one case being immediate. 
Assume that $n \ge 2$ and that the statement of the theorem is true for groups of rank less than $n$.  
Since the Coxeter diagram of $W$ is a tree, there exists $i \in \{1, \dots , n\}$ such that  
the standard parabolic subgroup, $W_i$, is irreducible.  Let $c' \in W_i$ be the Coxeter element given by 
$c' = s_1\cdots \widehat{s_i} \cdots s_n$ and note that this factorisation is bipartite.   

The reflection group $W_i$ acts irreducibly and effectively on the $(n-1)$-dimensional subspace $U = \beta_i^\perp$.  The simple system $\alpha_1, \dots , \alpha_{i-1},  \alpha_{i+1},  \dots , \alpha_{n}$ 
has dual basis given by $\beta'_1, \dots , \beta'_{i-1},  \beta'_{i+1},  \dots , \beta'_{n}$, where  
$\beta'_j = (\beta_j)_U$, the projection of $\beta_j$ onto $U$ orthogonal to $\beta_i$.
The point 
\[v'_0 = \beta'_1+\cdots +\beta'_{i-1}+\beta'_{i+1}+\cdots+\beta'_{n} = (v_0)_U\]
lies in the fundamental chamber for $W_i$.
By induction, the convex hull of $W_i(v'_0)$ is triangulated by special $(n-1)$-simplices and their faces.  
The translation of this triangulation by 
the vector $[(\beta_i \cdot v_0)/(\beta_i \cdot \beta_i)]\beta_i$ gives a triangulation, $\mathcal{T}_i$, of $Q = P_i$, the convex hull of $W_i(v_0)$.
Applying Proposition~\ref{extendtopboundarysimplex}, we can conclude that 
each $(n-1)$-simplex of $\mathcal{T}_i$ is a facet of exactly one special $n$-simplex in $\mathcal{T}$.   
Thus the triangulation $\mathcal{T}_i$ is the restriction of $\mathcal{T}$  to the facet $Q$ and no special $(n-1)$-simplex of $\mathcal{T}_i$ is a face of two different special $n$-simplices of  $\mathcal{T}$.  Thus $\mathcal{T}$ satisfies the general facet property.  
By Remark~\ref{TriConditions},
$\mathcal{T}$ is a triangulation of $P(W)$.
\end{proof} 
\begin{theorem}   
\label{htyequiv}
The triangulation, $\mathcal{T}$,  defines a map from $\qsal(W)$ to $\qncp(W)$ which is a homotopy equivalence.
\end{theorem}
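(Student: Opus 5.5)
We build the map cell by cell using the triangulation $\mathcal{T}$ of Theorem~\ref{triangulation}, and then show it is a homotopy equivalence by checking that it induces an isomorphism on $\pi_1$. Both spaces are $K(\pi,1)$'s, so Whitehead's theorem finishes the argument.

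\textbf{Step 1: a simplicial map from $P(W)$.} Every simplex of $\mathcal{T}$ is a face of some special $n$-simplex $\sns{w_0}{w_1}{w_n}$. So every simplex has vertex set $\{w_{j_0}(v_0),\dots,w_{j_p}(v_0)\}$ with $j_0<\dots<j_p$, and these vertices are ordered by $l_S$. Send such a simplex linearly onto the simplex of $\vert L\vert$ given by $(e, w_{j_0}^{-1}w_{j_1},\dots,w_{j_0}^{-1}w_{j_p})$, or equivalently onto the image in $\qncp(W)$ of $(w_0^{-1}w_{j_0},\dots,w_0^{-1}w_{j_p})$. This is a chain in $L$, because it is a subchain of the maximal chain $e<_T w_0^{-1}w_1<_T\cdots<_T c$.

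\textbf{Step 2: well-definedness.} The image depends only on the vertices $u_0,\dots,u_p$ of the simplex, ordered by length, through the differences $u_0^{-1}u_q$. Hence the assignments agree on common faces. This uses the fact that $\mathcal{T}$ is a simplicial complex, so a simplex lying in two special $n$-simplices has the same vertices in both. The assignments also respect face maps, which is exactly the $\Delta$-complex (interval complex) structure of $\qncp(W)$. We therefore obtain a continuous map $\Phi: P(W)\to\qncp(W)$.

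\textbf{Step 3: descent to $\qsal(W)$.} This is the main point to check. Let $Q, Q'$ be faces in the same $W$-orbit, identified by the map $g=w_Qw_{Q'}^{-1}$, which acts by left multiplication on vertex labels. We need two facts.
\begin{itemize}
\item[(a)] $g$ carries the restriction of $\mathcal{T}$ to $Q'$ onto the restriction to $Q$. By Section~\ref{permfacechar}, a simplex of $\mathcal{T}$ in a face is characterised by conditions (i)--(iv), and Proposition~\ref{extendboundarysimplex2} shows that every set of vertices satisfying (i)--(iv) does occur. Conditions (ii) and (iii) are invariant under left multiplication. Condition (iv) is invariant as well: for $x=w_Qu$ with $u$ in the standard parabolic subgroup $U$, we have $l_S(w_Qu)=l_S(w_Q)+l_S(u)$, since $w_Q$ is a minimal coset representative. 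So (iv) reduces to a condition on the elements of $U$ alone.
\item[(b)] $\Phi$ is compatible with $g$. The labels $u_0^{-1}u_q$ are unchanged by left multiplication, so $\Phi\circ g=\Phi$ on $Q'$.
\end{itemize}
Together these show that $\Phi$ factors through a map $\bar\Phi:\qsal(W)\to\qncp(W)$.

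\textbf{Step 4: the map on $\pi_1$.} The vertex $y_0$ goes to $z$. Consider the edge from $v_0$ to $s_i(v_0)$, which gives the loop $e_i$. By Remark~\ref{oneskeleton} this edge lies in some special $n$-simplex, and it is sent to the $1$-cell $(e,s_i)$, namely $u_{s_i}$, with matching orientation. Lemma~\ref{iso-fg} then shows that $\bar\Phi_*$ is an isomorphism on $\pi_1$.

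\textbf{Step 5: conclusion.} Both spaces are aspherical CW complexes. By Whitehead's theorem, $\bar\Phi$ is a homotopy equivalence.

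I expect the main obstacle to be Step 3(a): showing that the triangulations induced on $W$-equivalent faces correspond under $g$, which requires the length additivity for minimal coset representatives described there.
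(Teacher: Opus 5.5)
Your proposal is correct and follows essentially the same route as the paper. It uses the same trisp map $(w_0(v_0),\dots,w_d(v_0))\mapsto(e,w_0^{-1}w_1,\dots,w_0^{-1}w_d)$, descends to $\qsal(W)$ via length additivity for minimal coset representatives (the paper cites Proposition~2.4.4 of \cite{BjBr}) combined with Proposition~\ref{extendboundarysimplex2}, and concludes with the $\pi_1$-isomorphism of Lemma~\ref{iso-fg} plus Whitehead's theorem (the paper cites the proof of Theorem~1B.8 of \cite{H}). One detail should be stated explicitly: conditions (i)--(iv) characterise only the top-dimensional simplices of $\mathcal{T}$ in a face, so you need that these simplices cover the face (the paper gets this from Lemma~2.3.4(iv) of \cite{DRS}), and this covering is all the descent argument requires.
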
 
\begin{proof}  
The non-crossing partition  $K(\pi, 1)$, $\qncp(W)$, has the structure of a $\Delta$-complex or trisp.  Here 
the $d$-simplex determined by the chain $e<_Tw_1 <_T \cdots <_T w_d$ is assigned to the $(d+1)$-tuple 
$(e,w_1,  \dots , w_d)$ with the ordering on the vertices given by the total reflection length on the $w_i$.
Theorem~\ref{triangulation}  allows us to identify $P(W)$ with a geometric simplicial complex, which is automatically a trisp.  
Here the ordering on the vertices of a simplex $(w_0(v_0), w_1(v_0), \dots , w_d(v_0))$ is given by 
the standard length of the $w_i$.   Thus we can define a map $\Theta$ from $P(W)$ to $\qncp(W)$ 
as the trisp map given by 
\[(w_0(v_0), w_1(v_0), \dots , w_d(v_0)) \mapsto (e, w_0^{-1}w_1, \dots , w_0^{-1}w_d).\]

We show that $\Theta$ induces a well-defined map from $\qsal$ to $\qncp$.    
\vskip .2cm
Let $F$ be a face of $P(W)$ and let $d$ be the dimension of $F$. 
By Proposition~\ref{permfacets}, there is a rank $d$ standard parabolic subgroup 
$W' = \langle s_{i_1}, \dots , s_{i_d}\rangle \subset W$ 
and an element $a \in W$ such that $F$ is the convex hull of $aW'(v_0)$.
Assume that $a$ is the $S$-length minimising element of the coset $aW'$.  
Let $F'$ be a face of $P(W)$ which is identified with $F$ in $\qsal(W)$.  Then 
$F'$ is the convex hull of $bW'(v_0)$ for some $b \in W$ which is the $S$-length minimising representative of its coset $bW'$ and the identification is via the orthogonal transformation $ba^{-1}$.
By Lemma 2.3.4(iv) of \cite{DRS} and Theorem~\ref{triangulation} above, $F$ is triangulated by $d$-simplices which are the intersections of $F$ with special $n$-simplices of the triangulation of $P(W)$.  
Let $\sigma$ be such a $d$-simplex and suppose $\sigma$ is the convex hull of 
\[aw(v_0), awu_1(v_0), \dots , awu_d(v_0),\]
where $w \in W'$, $e\lessdot_T u_1\lessdot_T  \dots \lessdot_T u_d = c'$ and $c' = s_{i_1}\cdots s_{i_d} \in W'$.    By the choice of $a$ and the definition of a special $n$-simplex,
\[l_S(awu_i) = l_S(a)+l_S(w)+l_T(u_i) \ \mbox{for} \ 1\le i \le d.\]
In $\qsal(W)$, $\sigma$ is identified with the convex hull of 
\[bw(v_0), bwu_1(v_0), \dots , bwu_d(v_0),\]
in the face $F'$.  
By Proposition~2.4.4 of \cite{BjBr},
\[l_S(bwu_i) = l_S(b)+l_S(w)+l_T(u_i) \ \mbox{for} \ 1\le i \le d.\]
We now apply Proposition~\ref{extendboundarysimplex2} to conclude that $bw(v_0), bwu_1(v_0), \dots , bwu_d(v_0)$ 
are the vertices of a $d$-face, $\sigma'$, of a special $n$-simplex.  Finally the images $\Theta(\sigma)$ and $\Theta(\sigma')$ in $\qncp(W)$ are both equal to the $(d+1)$-tuple $(e, u_1,  \dots , u_d)$, so that $\Theta$ induces a well defined map from $\qsal(W)$ to $\qncp(W)$.

The map induced by $\Theta$ is a homotopy equivalence by the proof of Theorem~1B.8 of \cite{H}, since, by Lemma ~\ref{iso-fg}, the induced map on fundamental groups is an isomorphism.
\end{proof} 
\bibliographystyle{plain}
\bibliography{triangle}

@article {Bessis,
    AUTHOR = {Bessis, David},
     TITLE = {The dual braid monoid},
   JOURNAL = {Ann. Sci. \'{E}cole Norm. Sup. (4)},
  FJOURNAL = {Annales Scientifiques de l'\'{E}cole Normale Sup\'{e}rieure.
              Quatri\`eme S\'{e}rie},
    VOLUME = {36},
      YEAR = {2003},
    NUMBER = {5},
     PAGES = {647--683},
      ISSN = {0012-9593},
   MRCLASS = {20F36 (57M07)},
  MRNUMBER = {2032983},
MRREVIEWER = {Valeriy\ G.\ Bardakov},
       DOI = {10.1016/j.ansens.2003.01.001},
       URL = {https://doi.org/10.1016/j.ansens.2003.01.001},
}

@article{BessisAnnals,
 author = {Bessis, David},
 title = {Finite complex reflection arrangements are {{\(K(\pi,1)\)}}},
 fjournal = {Annals of Mathematics. Second Series},
 journal = {Ann. Math. (2)},
 issn = {0003-486X},
 volume = {181},
 number = {3},
 pages = {809--904},
 year = {2015},
 language = {English},
 doi = {10.4007/annals.2015.181.3.1},
 zbMATH = {6446405},
 Zbl = {1372.20036}
}

@book {BjBr,
    AUTHOR = {Bj\"orner, Anders and Brenti, Francesco},
     TITLE = {Combinatorics of {C}oxeter groups},
    SERIES = {Graduate Texts in Mathematics},
    VOLUME = {231},
 PUBLISHER = {Springer, New York},
      YEAR = {2005},
     PAGES = {xiv+363},
      ISBN = {978-3540-442387; 3-540-44238-3},
   MRCLASS = {05-01 (05E15 20F55)},
  MRNUMBER = {2133266},
MRREVIEWER = {Jian-yi Shi},
}

@article {BjWa,
    AUTHOR = {Bj\"{o}rner, Anders and Wachs, Michelle L.},
     TITLE = {Generalized quotients in {C}oxeter groups},
   JOURNAL = {Trans. Amer. Math. Soc.},
  FJOURNAL = {Transactions of the American Mathematical Society},
    VOLUME = {308},
      YEAR = {1988},
    NUMBER = {1},
     PAGES = {1--37},
      ISSN = {0002-9947},
   MRCLASS = {05A99 (06F99 20B30 20F99)},
  MRNUMBER = {946427},
MRREVIEWER = {Joseph Kung},
       DOI = {10.2307/2000946},
       URL = {https://doi.org/10.2307/2000946},
}

@inproceedings{BWKayPie,
	Author = {T.~Brady and C.~Watt},
	Booktitle = {Proceedings of the {C}onference on {G}eometric and {C}ombinatorial {G}roup {T}heory, {P}art {I} ({H}aifa, 2000)},
	Journal = {Geom. Dedicata},
	Pages = {225--250},
	Title = {{$K(\pi,1)$}'s for {A}rtin groups of finite type},
	Volume = {94},
	Year = {2002}}

@article{Lattices,
	Author = {Brady, T. and Watt, C.},
	Coden = {TAMTAM},
	Doi = {10.1090/S0002-9947-07-04282-1},
	Fjournal = {Transactions of the American Mathematical Society},
	Issn = {0002-9947},
	Journal = {Trans. Amer. Math. Soc.},
	Mrclass = {20F55 (05E15)},
	Mrnumber = {2366971 (2008k:20088)},
	Mrreviewer = {Alessandro Conflitti},
	Number = {4},
	Pages = {1983--2005},
	Title = {Non-crossing partition lattices in finite real reflection groups},
	Url = {http://dx.doi.org/10.1090/S0002-9947-07-04282-1},
	Volume = {360},
	Year = {2008}}

@book {Davis,
    AUTHOR = {Davis, Michael W.},
     TITLE = {The geometry and topology of {C}oxeter groups},
    SERIES = {London Mathematical Society Monographs Series},
    VOLUME = {32},
 PUBLISHER = {Princeton University Press, Princeton, NJ},
      YEAR = {2008},
     PAGES = {xvi+584},
      ISBN = {978-0-691-13138-2; 0-691-13138-4},
   MRCLASS = {20F55 (05B45 05C25 51-02 57M07)},
  MRNUMBER = {2360474},
MRREVIEWER = {Ralf Koehl},
}

@article{DelucchiPaoliniSalvetti,
 author = {Delucchi, Emanuele and Paolini, Giovanni and Salvetti, Mario},
 title = {Dual structures on {Coxeter} and {Artin} groups of rank three},
 fjournal = {Geometry \& Topology},
 journal = {Geom. Topol.},
 issn = {1465-3060},
 volume = {28},
 number = {9},
 pages = {4295--4336},
 year = {2024},
}

@article{DSBW,
  title={Permutahedron Triangulations via Total Linear Stability and the Dual Braid Group}, 
      author={Colin Defant and Melissa Sherman-Bennett and Nathan Williams},
      year={2025},
      journal={arXiv preprint arXiv:2509.11497},

}

@book {DRS,
    AUTHOR = {De Loera, Jes\'us A. and Rambau, J\"org and Santos, Francisco},
     TITLE = {Triangulations},
    SERIES = {Algorithms and Computation in Mathematics},
    VOLUME = {25},
      NOTE = {Structures for algorithms and applications},
 PUBLISHER = {Springer-Verlag, Berlin},
      YEAR = {2010},
     PAGES = {xiv+535},
      ISBN = {978-3-642-12970-4},
   MRCLASS = {52B55 (05C10 52B05 57Q15 68U05)},
  MRNUMBER = {2743368},
       DOI = {10.1007/978-3-642-12971-1},
       URL = {https://doi.org/10.1007/978-3-642-12971-1},
}

@book{H,
	Author = {A.~Hatcher},
	Isbn = {0-521-79160-X; 0-521-79540-0},
	Mrclass = {55-01 (55-00)},
	Mrnumber = {1867354 (2002k:55001)},
	Mrreviewer = {Donald W. Kahn},
	Pages = {xii+544},
	Publisher = {Cambridge University Press, Cambridge},
	Title = {Algebraic topology},
	Year = {2002}}

@book{Hum90,
	Author = {Humphreys, J. E.},
	Doi = {10.1017/CBO9780511623646},
	Isbn = {0-521-37510-X},
	Mrclass = {20-02 (20F32 20F55 20G15 20H15)},
	Mrnumber = {1066460 (92h:20002)},
	Mrreviewer = {Louis Solomon},
	Pages = {xii+204},
	Publisher = {Cambridge University Press, Cambridge},
	Series = {Cambridge Studies in Advanced Mathematics},
	Title = {Reflection groups and {C}oxeter groups},
	Url = {http://dx.doi.org/10.1017/CBO9780511623646},
	Volume = {29},
	Year = {1990}}

@book{K,
	Address = {Berlin},
	Author = {Kozlov, D.},
	Doi = {10.1007/978-3-540-71962-5},
	Isbn = {978-3-540-71961-8},
	Mrclass = {55-02 (05C15 05C25 06A07 52B70 55U10 57-02)},
	Mrnumber = {2361455 (2008j:55001)},
	Mrreviewer = {Rade {\v{Z}}ivaljevi{{\'c}}},
	Pages = {xx+389},
	Publisher = {Springer},
	Series = {Algorithms and Computation in Mathematics},
	Title = {Combinatorial algebraic topology},
	Url = {http://dx.doi.org/10.1007/978-3-540-71962-5},
	Volume = {21},
	Year = {2008}}

@article{PaoliniSalvetti,
 author = {Paolini, Giovanni and Salvetti, Mario},
 title = {Proof of the {{\(K(\pi,1)\)}} conjecture for affine {Artin} groups},
 fjournal = {Inventiones Mathematicae},
 journal = {Invent. Math.},
 issn = {0020-9910},
 volume = {224},
 number = {2},
 pages = {487--572},
 year = {2021}
}

@article{Salvetti87,
 author = {Salvetti, M.},
 title = {Topology of the complement of real hyperplanes in {{\(\mathbb C^N\)}}},
 fjournal = {Inventiones Mathematicae},
 journal = {Invent. Math.},
 issn = {0020-9910},
 volume = {88},
 pages = {603--618},
 year = {1987},
 language = {English},
 doi = {10.1007/BF01391833},
 url = {https://eudml.org/doc/143468},
 zbMATH = {3956003},
 Zbl = {0594.57009}
}

@article{Salv94,
	Author = {Salvetti, M.},
	Fjournal = {Mathematical Research Letters},
	Issn = {1073-2780},
	Journal = {Math. Res. Lett.},
	Mrclass = {52B30 (20F36 20F55)},
	Mrnumber = {1295551 (95j:52026)},
	Mrreviewer = {Luis Paris},
	Number = {5},
	Pages = {565--577},
	Title = {The homotopy type of {A}rtin groups},
	Volume = {1},
	Year = {1994}}

@article {Steinberg,
    AUTHOR = {Steinberg, Robert},
     TITLE = {Finite reflection groups},
   JOURNAL = {Trans. Amer. Math. Soc.},
  FJOURNAL = {Transactions of the American Mathematical Society},
    VOLUME = {91},
      YEAR = {1959},
     PAGES = {493--504},
      ISSN = {0002-9947},
   MRCLASS = {50.00 (10.00)},
  MRNUMBER = {0106428},
MRREVIEWER = {H. S. M. Coxeter},
       DOI = {10.2307/1993261},
       URL = {https://doi.org/10.2307/1993261},
}

@book {Ziegler,
    AUTHOR = {Ziegler, G\"unter M.},
     TITLE = {Lectures on polytopes},
    SERIES = {Graduate Texts in Mathematics},
    VOLUME = {152},
 PUBLISHER = {Springer-Verlag, New York},
      YEAR = {1995},
     PAGES = {x+370},
      ISBN = {0-387-94365-X},
   MRCLASS = {52Bxx},
  MRNUMBER = {1311028},
MRREVIEWER = {Margaret M. Bayer},
       DOI = {10.1007/978-1-4613-8431-1},
       URL = {https://doi.org/10.1007/978-1-4613-8431-1},
}
\end{document}